\numberwithin{equation}{section}
\theoremstyle{plain}
\newtheorem{theorem}{Theorem}[section]
\newtheorem{lemma}[theorem]{Lemma}
\newtheorem{proposition}[theorem]{Proposition}
\newtheorem{corollary}[theorem]{Corollary}
\newtheorem*{theorem-non}{Theorem}
\theoremstyle{definition}
\newtheorem{definition}[theorem]{Definition}
\newtheorem{remark}[theorem]{Remark}
\theoremstyle{definition}
\newtheorem{hypothesis}[theorem]{Hypothesis}
\theoremstyle{definition}
\newcommand{\be}{\begin{equation}}
\newcommand{\ee}{\end{equation}}
\newcommand{\n}{\noindent}
\DeclareMathOperator*{\esssup}{ess\,sup}
\DeclareMathOperator{\loc}{loc}
\DeclareMathOperator{\SO}{SO}
\DeclareMathOperator{\BMO}{BMO}
\DeclareMathOperator{\dist}{dist}
\DeclareMathOperator{\Div}{Div}
\DeclareMathOperator{\tr}{tr}
\DeclareMathOperator{\AM}{AM}
\DeclareMathOperator{\Var}{Var}
\DeclareMathOperator{\diag}{diag}
\newcommand{\SOn}{{\SO(n)}}
\DeclareMathOperator{\Lin}{Lin}
\DeclareMathOperator{\Psymn}{Psym_\mathit{n}}
\DeclareMathOperator{\Symn}{Sym_\mathit{n}}
\DeclareMathOperator{\Def}{Def}
\DeclareMathOperator{\AD}{AD}
\DeclareMathOperator{\trace}{trace}
\newcommand{\tsn}[1]{{[\kern-0.4ex] #1 [\kern-0.4ex]}}
\newcommand{\oo}{_{\mathrm o}}
\newcommand{\grad}{\nabla}
\newcommand{\bee}{{\mathbf e}}
\newcommand{\bff}{{\mathbf f}}
\newcommand{\Td}{{\;\!:\!\;}}
\newcommand{\ADu}{{\AD_u}}
\newcommand{\Varu}{{\Var_u}}
\newcommand{\bL}{{\mathbf L}}
\newcommand{\bB}{{\mathbf B}}
\newcommand{\bC}{{\mathbf C}}
\newcommand{\bi}{{\mathbf i}}
\newcommand{\bV}{{\mathbf V}}
\newcommand{\bU}{{\mathbf U}}
\newcommand{\bP}{{\mathbf P}}
\newcommand{\bE}{{\mathbf E}}
\newcommand{\bH}{{\mathbf H}}
\newcommand{\bN}{{\mathbf N}}
\newcommand{\bI}{{\mathbf I}}
\newcommand{\bF}{{\mathbf F}}
\newcommand{\bG}{{\mathbf G}}
\newcommand{\bK}{{\mathbf K}}
\newcommand{\bu}{{\mathbf u}}
\newcommand{\bv}{{\mathbf v}}
\newcommand{\bx}{{\mathbf x}}
\newcommand{\bd}{{\mathbf d}}
\newcommand{\by}{{\mathbf y}}
\newcommand{\bz}{{\mathbf z}}
\newcommand{\bn}{{\mathbf n}}
\newcommand{\bt}{{\mathbf t}}
\newcommand{\bw}{{\mathbf w}}
\newcommand{\bQ}{{\mathbf Q}}
\newcommand{\bS}{{\mathbf S}}
\newcommand{\bs}{{\mathbf s}}
\newcommand{\ba}{{\mathbf a}}
\newcommand{\bb}{{\mathbf b}}
\newcommand{\bA}{{\mathbf A}}
\newcommand{\bR}{{\mathbf R}}
\newcommand{\br}{{\mathbf r}}
\newcommand{\bm}{{\mathbf m}}
\newcommand{\sT}{{\mathcal T}}
\newcommand{\sB}{{\mathcal B}}
\newcommand{\sS}{{\mathcal S}}
\newcommand{\sD}{{\mathcal D}}
\newcommand{\sH}{{\mathcal H}}
\newcommand{\cO}{{\mathcal O}}
\newcommand{\mi}{{\text{-}}}
\newcommand{\R}{{\mathbb R}}
\newcommand{\A}{{\mathbb A}}
\newcommand{\Mn}{{\mathbb M}^{n\times n}}
\newcommand{\MN}{{\mathbb M}^{N\times n}}
\newcommand{\Mnp}{{\mathbb M}_+^{n\times n}}
\newcommand{\eee}{{_{\!\mathrm e}}}
\newcommand{\bFFe}{{{\mathbf F}\eee}}
\newcommand{\bFFei}{{{\mathbf F}_{\!\mathrm e}^{\mi1}}}
\newcommand{\bFFeiT}{{{\mathbf F}_{\!\mathrm e}^{\mi\rmT}}}
\newcommand{\bFFeT}{{{\mathbf F}_{\!\mathrm e}^{\rmT}}}
\newcommand{\bGG}{{\mathbf G}}
\newcommand{\bGGT}{{\mathbf G}^\rmT}
\newcommand{\bue}{{{\mathbf u}\e}}
\newcommand{\bzh}{{\widehat{\mathbf z}}}
\newcommand{\buh}{{\widehat{\mathbf u}}}
\newcommand{\bvh}{{\widehat{\mathbf v}}}
\newcommand{\bwh}{{\widehat{\mathbf w}}}
\newcommand{\Wu}{{W_{\!u\!}}}
\newcommand{\Au}{{{\mathbb A}_{u\!}}}
\newcommand{\Su}{{\bS_{u\!}}}
\newcommand{\bbu}{{\bb_{u\!}}}
\newcommand{\bsu}{{\bs_{u\!}}}
\newcommand{\Om}{{\Omega}}
\newcommand{\Ome}{{\Om_e}}
\newcommand{\Omc}{{\overline{\Omega}}}
\newcommand{\E}{{\mathcal E}}
\newcommand{\Eu}{{\E_u}}
\newcommand{\e}{_{\mathrm e}}
\newcommand{\h}{_{\mathrm h}}
\newcommand{\dd}{{\mathrm d}}
\newcommand{\DD}{{{\mathrm D}}}
\newcommand{\psihat}{\tilde{\psi}}
\newcommand{\phihat}{\tilde{\varphi}}
\newcommand{\rmT}{{\mathrm T}}
\def\Xint#1{\mathchoice
   {\XXint\displaystyle\textstyle{#1}}
   {\XXint\textstyle\scriptstyle{#1}}
   {\XXint\scriptstyle\scriptscriptstyle{#1}}
   {\XXint\scriptscriptstyle\scriptscriptstyle{#1}}
   \!\int}
\def\XXint#1#2#3{{\setbox0=\hbox{$#1{#2#3}{\int}$}
     \vcenter{\hbox{$#2#3$}}\kern-.5\wd0}}
\def\dashint{\Xint-}
\newlength{\extramargin}
\begin{document}

\title[Uniqueness of Equilibrium
with Small Strains in Elasticity]
{Uniqueness of Equilibrium with
Sufficiently Small Strains in Finite Elasticity}


\author[D. E. Spector]{Daniel E. Spector}
\address{National Chiao Tung University,
Department of Applied Mathematics,
Hsinchu, Taiwan}
\address{National Center for Theoretical Sciences,
National Taiwan University,
No. 1 Sec. 4 Roosevelt Rd.,
Taipei, 106, Taiwan}
\email{dspector@math.nctu.edu.tw}
\thanks{The first author (DS) is supported by the Taiwan Ministry of Science
and Technology under research grant 105-2115-M-009-004-MY2.}

\author[S. J. Spector]{Scott J. Spector}
\address{Department of Mathematics,
Southern Illinois University, Carbondale, IL 62901, USA}
\email{sspector@siu.edu}
\thanks{}


\date{11 November 2018}
\dedicatory{}

\keywords{Nonlinear Elasticity, Finite Elasticity, Uniqueness,
Equilibrium Solutions, Local Fefferman-Stein Inequality,
Maximal Functions, Bounded Mean Oscillation, Geometric Rigidity,
BMO Local Minimizers, Small Strains}
\subjclass[2010]{74B20, 35A02, 74G30, 35J57, 42B25, 42B37, 49S05}
%
%


\begin{abstract}  The uniqueness of equilibrium for a compressible,
hyperelastic body subject to dead-load boundary conditions is
considered.  It is shown, for both the displacement and
mixed problems,
that there cannot be two solutions of the equilibrium
equations of Finite (Nonlinear) Elasticity whose nonlinear strains
are uniformly close to each other. This result is analogous to
the result of Fritz John (Comm.\ Pure Appl.\ Math.\ \textbf{25},
617--634, 1972) who proved that, for the displacement problem,
there is a most one equilibrium solution with uniformly small strains.
The proof in this manuscript utilizes Geometric Rigidity; a new
straightforward extension
of the Fefferman-Stein inequality to bounded domains;
and, an appropriate adaptation, for Elasticity,
of a result from the Calculus of Variations.
Specifically, it is herein shown that
the uniform positivity of the second variation of the energy
at an equilibrium solution implies that this mapping is
a local minimizer of the energy among deformations
whose gradient is sufficiently close, in $\BMO\cap\, L^1$,
to the gradient of the equilibrium solution.
\end{abstract}

\maketitle
\setlength{\parskip}{.5em}
\setlength{\parindent}{2em}
\baselineskip=15pt

\section{Introduction}\label{sec:Intro}


We herein consider the uniqueness of equilibrium
solutions for a compressible, hyperelastic body
$\overline{\Omega}\subset\R^n$, subject to dead loads.  This problem was
previously analyzed by John~\cite{Jo72} who showed that for the
pure-displacement (Dirichlet) problem there is at most one smooth solution
of the equilibrium  (Euler-Lagrange) equations among those mappings that
have uniformly small \emph{strains}:
\[  
\bE_\bu:= \tfrac12 \left[(\grad\bu)^\rmT\grad\bu-\bI\right],
\]  
\n where $\grad\bu$ denotes the matrix of partial derivatives of
$\bu:\overline{\Omega}\to\R^n$ and we write $\bF^\rmT$ for the transpose
of the $n$ by $n$ matrix $\bF$.
The main objective of this manuscript is the extension
of John's result to the mixed problem. However, our approach
also yields the uniqueness of equilibrium in a neighborhood in the
space of strains.   More precisely we prove that
given a smooth solution of the equilibrium equations,
$\bu\e$, at which the second variation of
the energy is uniformly positive, there is no other equilibrium
solution, $\bv\e$, for which the
difference of the two
right Cauchy-Green strain tensors:
\be\label{eqn:strain-diff-zero-1}
(\grad\bu\e)^\rmT\grad\bu\e- (\grad\bv\e)^\rmT\grad\bv\e
\ee
\n is uniformly small.


In the absence of body forces and surface tractions, the
total energy of a deformation $\bu:\overline{\Omega}\to\R^n$ of
a compressible, hyperelastic body is given by
\[
\E(\bu):=\int_\Omega W\big(\bx,\grad\bu(\bx)\big)\;\!\dd\bx,
\]
\n where $W:\overline{\Omega}\times\Mnp\to[0,\infty)$ denotes the
stored-energy density and we write $\Mnp$ for the set of $n$ by $n$
matrices with positive determinant.  We require that $\bu=\bd$ on $\sD$,
where $\bd$ is prescribed and  $\sD\subset\partial \Omega$ is nonempty
and relatively open.
The pure-displacement problem can then be expressed as the condition
$\sD=\partial \Omega$, while the genuine-mixed problem is
the condition $\sD\varsubsetneq\partial\Omega$.  We here
consider both problems.  With this notation,
we call $\bu\e$ an equilibrium solution
if it is a weak solution of the corresponding Euler-Lagrange equations:
\[
\delta\mathcal{E}(\bu\e)[\bw]
=
\int_\Omega
\bS\big(\bx,\grad\bu\e(\bx)\big)\!:\!\grad\bw(\bx)\, \dd\bx = 0
\]
\n for all $\bw\in W^{1,2}(\Omega;\R^n)$ that satisfy $\bw=\mathbf{0}$
on $\sD$, while the uniform positivity of the second variation of
$\E$ at $\bu\e$ is then the condition that
\[
\delta^2\E(\bu\e)[\bw,\bw]=\int_\Omega \grad\bw(\bx)
\!:\!\A\big(\bx,\grad\bu\e(\bx)\big)\big[\grad\bw(\bx)\big]\dd\bx
\ge
k\int_\Omega |\grad\bw(\bx)|^2\dd\bx
\]
\n for some $k>0$ and all $\bw\in W^{1,2}(\Omega;\R^n)$ that
satisfy $\bw=\mathbf{0}$ on $\sD$. Here we write
$W^{1,2}(\Omega;\R^n)$ for the usual
Sobolev space of square-integrable, vector-valued functions whose
distributional gradient, $\grad\bw$, is square integrable.
Also, $\bH\!:\!\bK:=\trace(\bH\bK^\rmT)$ and $\bS(\bx,\bF)$ and
$\A(\bx,\bF)$ denote the Piola-Kirchhoff stress
and the Elasticity Tensor, respectively:
\[
\bS(\bx,\bF):= \frac{\partial}{\partial\bF}W(\bx,\bF),
\qquad   \A(\bx,\bF):=
\frac{\partial^2}{\partial\bF^2}W(\bx,\bF).
\]





   It is well-known that when the second variation is uniformly positive
at an equilibrium solution $\bu\e$, then there is a neighborhood of
$\bu\e$ in the Sobolev space $W^{1,\infty}(\Omega;\R^n)$ in which there
are no other solutions of the equilibrium equations.  In addition, the
energy of any other mapping in this neighborhood is strictly greater than
the energy of $\bu\e$.  These assertions follow readily from a simple
analysis of the Taylor expansion of $\E$ that is inherited from the
Taylor series for the stored-energy function $W$:
\[
\E(\bw+\bu\e)= \E(\bu\e)
+ \delta\E(\bu\e)[\bw]
+\delta^2\E(\bu\e)[\bw,\bw]
+\mathcal{R}(\bu\e;\bw)
\]
\n with
\[
\big|\mathcal{R}(\bu\e;\bw)\big| \le
C \int_\Omega |\grad\bw(\bx)|^3\dd\bx.
\]


   In particular, the choice $\bw=\bv-\bu\e$, the fact that $\bu\e$ is
an equilibrium solution with uniformly positive second variation, and
the standard inequality
\be\label{eqn:Taylor-Linfinity}
\int_\Omega |\grad\bw(\bx)|^3\;\!\dd\bx
\le
||\grad\bw||_{L^\infty(\Om)}\int_\Omega |\grad\bw(\bx)|^2\;\!\dd\bx
\ee
\n imply that
\be\label{eqn:unique-minimizer}
\E(\bv)\ge \E(\bu\e)+ c\int_\Omega |\grad\bw(\bx)|^2\;\!\dd\bx
\ee
for some $c>0$, provided
$||\grad\bw||_{L^\infty(\Om)}$ is sufficiently
small.  From this one deduces the claims.


The essential point of John's work is that, while the assumption that
$||\bE_\bu||_{L^\infty(\Om)}$ and $||\bE_\bv||_{L^\infty(\Om)}$
are small need not imply the same for
$||\grad\bu-\grad\bv||_{L^\infty(\Om)}$, the above argument can be
suitably modified to obtain uniqueness for the pure-displacement problem.
For the purpose of our work it is convenient for us to separate two
key components of his proof.  The first is the fact that
uniformly small strains
$\bE_\bu$ and $\bE_\bv$ imply that $\grad\bu-\grad\bv$ has small norm
in the space of functions of Bounded Mean Oscillation, a
Geometric-Rigidity result that was obtained by John in various
forms~\cite{Jo61,Jo72,Jo72-2} and which has been further studied by
Friesecke, James, \& M{\"u}ller~\cite{FJM02} (see, also,
Kohn~\cite{Ko82} and Conti \& Schweizer~\cite{CS06}).
The second is that, while the preceding argument culminating in
inequality \eqref{eqn:unique-minimizer} is designed for $L^\infty$
neighborhoods of the gradient, it extends to $\BMO$ neighborhoods,
although this requires a more sophisticated analysis. Specifically,
one requires tools that allow for the replacement of $L^\infty$ by
$\BMO$. The canonical example of such a tool is the John-Nirenberg
inequality \cite{JN61}, and indeed, this is precisely what John used
in his proof of uniqueness.




In this paper we pursue an alternative approach to this replacement
through a local analogue of an inequality of Fefferman \&
Stein~\cite{FS72} for bounded Lipschitz domains.
In particular, we make use of results of Iwaniec~\cite{Iw82} and Diening,
R$\overset{_\circ}{\mathrm{u}}$\v zi\v cka, \& Schumacher~\cite{DRS10}
to obtain, in Theorem~2.6, an inequality that is valid for any bounded
Lipschitz domain $\Omega$: For every $q\in(1,\infty)$ there is a
constant $F=F(q)>0$ such that any $\psi\in L^1(\Omega)$ that satisfies
$\psi_\Omega^{\#}\in L^q(\Omega)$
will also satisfy
\[  
F^{\mi1}\int_\Omega |\psi|^q\,\dd\bx
\le
\int_\Omega |\psi_\Omega^{\#}|^q\,\dd\bx
+ \Big|\,\int_\Omega \psi \,\dd\bx \Big|^q.
\]  
\n Here $\psi_\Omega^{\#}$ (see \eqref{eqn:global-max}$_2$)
denotes the maximal function of Fefferman \& Stein~\cite{FS72}.
This inequality implies an interpolation inequality analogous
to \eqref{eqn:Taylor-Linfinity} (as well as a more general family
of inequalities, see Section~\ref{sec:MF}):
\be\label{eqn:small-BMO-zero-2}
\| \grad\bw\|_{L^3(\Omega)}
\le J
\Big(\tsn{\grad\bw}_{\BMO(\Om)}
+\Big|\int_\Om\grad\bw\,\dd\bx\Big|\,\Big)^{\!1/3}
\| \grad\bw\|^{2/3}_{L^2(\Omega)},
\ee
\n where $\tsn{\grad\bw}_{\BMO(\Om)}$ denotes the seminorm of $\grad\bw$
in $\BMO(\Omega)$ 
(see \eqref{eqn:BMO-V-norm}).
%
%
Therefore, if we replace \eqref{eqn:Taylor-Linfinity} by
\eqref{eqn:small-BMO-zero-2}, we obtain, in Theorem~\ref{thm:SVP=LMBMO},
a general uniqueness theorem
in the Calculus of Variations for neighborhoods where both
$\tsn{\grad\bw}_{\BMO(\Om)}$ and $\int_\Om\grad\bw\,\dd\bx$ are
small.   This result is in the spirit of a theorem of Kristensen
\& Taheri~\cite{KT03} (see, also, Campos Cordero~\cite{Ca17} and
Firoozye~\cite{Fi92})
for the Dirichlet problem under the assumption that the extension of
$\grad \bw$ by zero is small as an element of $\BMO(\R^n)$.

With these results established, we can return to the question of
uniqueness in Elasticity.  In particular, let us observe that for
the pure-displacement problem, an integration by parts shows that the
integral in \eqref{eqn:small-BMO-zero-2} is zero; thus, the coefficient
(with exponent $1/3$)
in the right-hand side of \eqref{eqn:small-BMO-zero-2} reduces to the
$\BMO(\Omega)$-seminorm, whose smallness follows from
Geometric Rigidity, and so we obtain John's result.  For the mixed
problem, with a few elementary calculations we show that for functions
which agree on a portion of the boundary one actually has a closeness
not just of the seminorms, but of the entire norm of their derivatives
in the space $\BMO(\Omega)\cap\,L^1(\Om)$.  Thus we obtain uniqueness
for the mixed
problem for small-strain solutions.  The general result asserted at
the beginning of the introduction then follows by a change of variables to
the deformed configuration and an application of the previous analysis.


As noted by Kohn~\cite[p.~134]{Ko82}, there is the question of whether
one has an existence theory that produces an equilibrium solution with
uniformly small
strains.  In particular, it is not clear from the existence theory of
Ball~\cite{Ba77}, or any of its many extensions, whether or not
$\bE_\bu$ is uniformly small.  A few things can be said in this regard.
First, a result of
Zhang~\cite{Zh91} for the displacement problem shows that
Ball's minimizer is the equilibrium solution obtained from
the implicit function
theorem\footnote{Although the results in \cite{Va88} are only stated
for the pure-displacement and pure-traction problems, it appears that
a similar analysis will be valid for the mixed problem \emph{provided}
that the parts of the boundary where displacements and tractions are
prescribed have disjoint closures, for example, the inside and the
outside of a thick spherical shell.
See, e.g., Ciarlet~\cite[Chapter~6]{Ci1988}.} (see, e.g.,
Valent~\cite{Va88} or Ciarlet~\cite[Chapter~6]{Ci1988})
provided the boundary is smooth
and the boundary displacements are sufficiently small.
Second, the equilibrium solution obtained from
the implicit function theorem will be as smooth
as desired when the boundary, the stored energy $W$,
and the boundary displacement $\bd$
are all sufficiently smooth.
Finally, unconditional uniqueness of equilibrium solutions
is neither desired nor expected in Nonlinear Elasticity.
For example, when a thin rod is subjected to uniaxial compression,
one expects that the rod will buckle and that there will be more than
one buckled equilibrium solution. Thus it may be natural to impose
additional restrictions to obtain uniqueness.


Results in the literature have established local uniqueness,
uniqueness when the deformation gradient lies in certain subsets of
$\Mnp$, uniqueness of the absolute minimizer of the energy when
appropriate extra conditions are imposed, and
uniqueness of equilibrium solutions for the displacement problem
for certain bodies and boundary values.
In particular,
Knops and Stuart~\cite{KS84} (see, also, Bevan~\cite{Be11} and
Taheri~\cite{Ta03}) have proven that, for a star-shaped body,
the homogeneous deformation $\bu\h(\bx)=\bF\bx+\ba$
is the only smooth equilibrium solution that satisfies a homogeneous
displacement boundary condition whenever the energy is
globally rank-one convex and strictly quasiconvex at $\bu\h$.
Gurtin and Spector~\cite{GS79} have shown that there is at most one
solution of the equilibrium equations that lies in any convex set where
the second variation of the energy is strictly positive.  Gao, Neff,
Roventa, and Thiel~\cite{GNRT17} have recently established that
the convexity of the elastic energy, when considered as a function
of the right Cauchy-Green strain tensor, implies that any
equilibrium solution $\bu\e$, at which the Cauchy Stress
is positive semi-definite at every point, is an
absolute minimizer of the energy.  Moreover, if in addition
$\bC\e(\bx)$
is a point of strict convexity of the energy at every
$\bx\in\Omega$, then $\bu\e$ is the
unique absolute minimizer of the energy.
Sivaloganathan \& Spector~\cite{SS18} have demonstrated that, for a
large class of polyconvex stored-energy functions, an equilibrium
solution that satisfies a certain pointwise inequality is the unique
absolute minimizer of the energy.  They also gave an elementary proof,
for the pure-displacement problem, of John's uniqueness with small
strains result that we consider in Section~\ref{sec:Def-SS}.


There is also an extensive
literature on nonuniqueness in Nonlinear Elasticity.
Post \& Sivaloganathan~\cite{PS97} have proven that there are an
infinite number of equilibrium solutions for certain displacement
problems for an annulus.    Antman~\cite{An79} has
shown that, for the pure-traction problem, a thick spherical shell
without loads has a second
equilibrium solution corresponding to an everted deformation.
See \cite[footnote~3]{SS18} for additional references that contain
examples of nonuniqueness.




Let us mention some related open problems before we proceed to the
plan of the paper.   Although our technique could, in principle, be
applied to the pure-traction problem, we have not considered dead-load
tractions applied to the entire boundary since the lack of any displacement
boundary condition necessitates an additional mathematical constraint
that induces the gradients of two solutions to be close in $L^1$
(see Proposition~\ref{prop:BC+SR=close-in-L1}). From a physical
point of view the difficulty is a potential axis of
equilibrium for the loads that leads to nonuniqueness of
equilibrium solutions.  For a
detailed explanation see, e.g., Valent~\cite[Chapter~5]{Va88} or
Truesdell \& Noll~\cite[\S44]{TN65} and the references therein.
An extension of our results to incompressible elastic bodies is
of interest.  Difficulties include the constraint that the
deformation gradient lie on the manifold $\det\grad\bu=1$ and the
pressure, which  appears as a Lagrange multiplier in the
equilibrium equations.
A uniqueness result for live loads would also be of interest.
Here one might
want to look at \cite{CPV03,PV90,Se67,Sp80},  \cite[\S2.7]{Ci1988},
or \cite[\S13.3]{Si97}.
Lastly, our results necessitate that the equilibrium equations
have a solution $\bu\e$ that is Lipschitz continuous.\footnote{The
standard existence theory for Nonlinear Elasticity
(see, e.g., Ball~\cite{Ba77}) yields
minimizers in $W^{1,p}(\Omega;\R^n)$ that satisfy only alternative forms of
the equilibrium equations.  See, e.g., Ball~\cite[Theorem~2.4]{Ba02}.}
However, some of our results also require that $\bu\e$ be one-to-one
on $\overline{\Omega}$, which prohibits self-contact of the boundary
of $\bu\e(\Omega)$.  It would be of interest if this assumption could be
excluded.  
In this regard, see Remark~\ref{rem:final-last}.


We commence our investigation in Section~\ref{sec:MF} with a development
of the requisite Harmonic Analysis.  In particular,
after we recall some properties of the Hardy-Littlewood and
Fefferman-Stein maximal functions, we establish a local analogue of
Fefferman and Stein's inequality  
in Theorem~\ref{thm:main-1}.  We then demonstrate, in
Theorem~\ref{thm:main-2}, how this inequality
gives rise to a family of interpolation inequalities that implies
\eqref{eqn:small-BMO-zero-2}.


In Section~\ref{sec:NLP-CV} we first recall some background material
from the Calculus of Variations.   We then make use of
the interpolation inequality from the previous section to establish
two results.  The first, Lemma~\ref{lem:extra},  shows that
whenever two mappings, $\bu$ and $\bv$, have gradients that are
sufficiently close in $\BMO\cap\,L^1$, the uniform positivity of the
second variation of the energy at either mapping implies that
the second variation at the other mapping is strictly positive in the
direction $\bw=\bv-\bu$, a simple result that we have found to be
helpful in establishing uniqueness of equilibrium solutions.
Finally, we show, in Theorem~\ref{thm:SVP=LMBMO},
that any mapping whose gradient is
sufficiently close, in $\BMO\cap\,L^1$, to the gradient of a Lipschitz
solution of the Euler-Lagrange equations
whose second variation is
uniformly positive, will have strictly greater energy
than the solution and also cannot satisfy
the Euler-Lagrange equations.


In Section~\ref{sec:RGRSM}
we observe that a general version of the
relationship between the distance from $\grad\bu$ to the set of
rotations and the norm in $\BMO(\Omega)$ of $\grad\bu$ is a consequence
of a Geometric-Rigidity result established in \cite{CS06,FJM02}:
Given a mapping
$\bu\in W^{1,p}(\Omega;\R^n)$, $1<p<\infty$, there is a particular
rotation $\bR_\bu$ such that the distance in $L^p(\Omega)$
from $\grad\bu$ to $\bR_\bu$ is, up to a
multiplicative constant which does not depend on $\bu$, a lower
bound for the distance in $L^p(\Omega)$ from $\grad\bu$ to the
set of rotations (see Proposition~\ref{prop:GR}).
It follows that, when $\grad\bu$ is uniformly close to the set of
rotations, $\grad\bu$ is small in $\BMO(\Omega)$.
We further show in Proposition~\ref{prop:BC+SR=close-in-L1} that
two mappings in $W^{1,p}(\Omega;\R^n)$, $p>n$, that share the
same boundary values on $\sD$
will have gradients that are close in $L^1(\Omega)$
whenever the gradients are close to the set of rotations.


In Section~\ref{sec:NLE}  we first recall some of the terminology from
Continuum Mechanics: bodies, deformations, deformation gradients,
strains, and the elastic energy and its first two derivatives: the
Piola-Kirchhoff stress tensor and the Elasticity Tensor.  We then show, in
Lemma~\ref{lem:strain=dist-to-rotations}, that the strains $\bE_\bu$
are uniformly small if and only if the gradient of the underlying
deformation $\bu$ is uniformly close to the
set of rotations. Finally, we note, in Theorem~\ref{thm:NT-3.0},
that when the coefficient (with exponent $1/3$)
in the right-hand side of \eqref{eqn:small-BMO-zero-2}
is small and $\bu\e$ is
an equilibrium solution with uniformly positive second variation, then
$\bv:=\bu\e+\bw$ cannot be a solution of the equilibrium equations and
$\bv$ must also have strictly greater energy than $\bu\e$.


In Section~\ref{sec:Def-SS}
we present our uniqueness results for Nonlinear Elasticity
when all strains are
uniformly small.  We first establish that when the reference
configuration
is stress free and the Elasticity Tensor at the reference configuration
is strictly positive definite, then not only is the second variation
uniformly positive at the reference
configuration, a result that is well-known and which follows from
Korn's inequality, but the second variation
is uniformly positive at any smooth deformation with
sufficiently small strains $\bE_\bu$.   We then obtain, in
Theorem~\ref{thm:unique},
the result mentioned in the first paragraph of this manuscript:
There is at most one equilibrium solution $\bu\e$ with sufficiently
small strains $\bE\e$ and, moreover, any other deformation with
small strains has
strictly greater energy than the energy of $\bu\e$.


Finally, in Section~\ref{sec:CRC},
we extend our results for Elasticity to include one mapping
with potentially large strains and a second mapping that is
close to it in the space of strains, that is, for which the
quantity given in \eqref{eqn:strain-diff-zero-1} is uniformly small.
We prove that given an equilibrium solution $\bu\e$ that is a
diffeomorphism and for which the second variation of the energy is
uniformly positive, any other mapping, $\bv$, with
right Cauchy-Green tensor, $\bC_\bv=(\grad\bv)^\rmT\grad\bv$,
uniformly and sufficiently close to $\bC\e=(\grad\bu\e)^\rmT\grad\bu\e$
cannot be a solution of the equilibrium equations and $\bv$ must also
have strictly
greater energy than the energy of $\bu\e$.  Our proof involves a change
of variables that replaces the reference configuration $\Omega$ by
the deformed configuration $\bu\e(\Omega)$.  Once this is accomplished,
small modifications of our previous analysis then yield
the desired result.


\part*{Part I: Maximal Functions, the Second Variation,
and $\BMO$ Local Minimizers}\label{part:max+CV}

\section{Maximal Functions}\label{sec:MF}


In this section we first recall some of the properties of the
Hardy-Littlewood and Fefferman-Stein maximal functions.  We then show
that results of Iwaniec~\cite{Iw82} and Diening,
R$\overset{_\circ}{\mathrm{u}}$\v zi\v cka, \&  Schumacher~\cite{DRS10}
yield a version of the Fefferman-Stein
inequality that is valid for many bounded, open regions.  This
inequality then allows us to give an elementary proof of a result of
John~\cite[p.~632]{Jo72} that bounds the $L^q$-norm of a function by its
$L^p$-norm, $q>p$, when the function is sufficiently small in
$\BMO\cap\,L^1$ rather than $L^\infty$.

\subsection{Preliminaries}\label{sec:prelim-maximal}

For any domain (nonempty, connected, open set)
$\Omega\subset\R^n$, $n\ge2$, we denote by
$L^p(\Omega)$, $p\in[1,\infty)$, the space
of real-valued Lebesgue measurable functions, $\psi$,
whose $L^p$-norm is finite:
\[
||\psi ||^p_{p,\Omega} := \int_\Omega |\psi(\bx)|^p\,\dd\bx < \infty.
\]
\n $L^1_{\loc}(\Omega)$ will consist of those Lebesgue measurable
functions that are integrable on every compact subset of $\Omega$.
$L^\infty(\Omega)$ will denote those Lebesgue measurable
functions whose essential supremum is finite.
Given any $\psi\in L^1_{\loc}(V)$, where $V=\R^n$ or $V$ is
a bounded domain, the \emph{Hardy-Littlewood} and
\emph{Fefferman-Stein maximal functions} of $\psi$ are given by
\be\label{eqn:global-max}
\psi_V^{\star} (\bx)
:=
\sup_{\substack{Q\ni \bx,\\ Q\subset V}}\
\dashint_Q |\psi(\by)|\,\dd\by,
\qquad
\psi_V^{\#} (\bx)
:=
\sup_{\substack{Q\ni \bx,\\ Q\subset V}}\
\dashint_Q \big|\psi(\by)-\langle\psi\rangle_Q\big|\,\dd\by,
\ee
\n respectively.  When $V=\R^n$ we shall omit the subscript $V$.
Here, and in the sequel, \emph{the symbol} $Q$ \emph{will
denote a nonempty, bounded (open) $n$-dimensional
hypercube}\footnote{We shall henceforth refer to a
$Q$ as a \emph{cube}, rather than a hypercube or square.} \emph{with
faces parallel to the coordinate hyperplanes} and
\[
\langle\psi\rangle_V
:=\dashint_V \psi(\bx)\,\dd\bx
:= \frac1{|V|}\int_V \psi(\bx)\,\dd\bx,
\]
\n \emph{the average value of} $\psi$,  where
$|V|$ denotes the $n$-dimensional Lebesgue measure of any
bounded domain $V\subset\R^n$.  For future reference we note that it
is not difficult to show that these functions satisfy
the pointwise estimates,\footnote{The first estimate follows from the
Lebesgue point theorem, while the second estimate is a straightforward
consequence of \eqref{eqn:global-max}.
See, e.g., \cite[p.~95]{Gr08} and \cite[p.~184]{Gr09}.}
for $a.e.~\bx\in V$,
\be\label{eqn:pointwise-bound}
\psi(\bx) \le \psi_V^{\star} (\bx),
\qquad
\psi_V^{\#} (\bx)\le 2\psi_V^{\star} (\bx).
\ee


   The $\BMO$-seminorm is given by
\be\label{eqn:BMO-V-norm}
\tsn{\psi}_{\BMO(V)}:=  \sup_{Q\subset V}
 \dashint_Q |\psi(\bx)-\langle\psi\rangle_Q|\,\dd\bx,
\ee
\n while the space $\BMO(V)$ (Bounded Mean Oscillation) is defined by
\[
\BMO(V)
:=
\{\psi\in L_{\loc}^1(V): \tsn{\psi}_{\BMO(V)} < \infty\}.
\]
\n  Here, once again, $V=\R^n$ or $V$ is a bounded domain and we shall
omit the $V$ when $V=\R^n$. For future reference we note that
\be\label{eqn:sharp=BMO}
\tsn{\psi}_{\BMO(V)}
=
\big\|\psi_V^{\#}\big\|_{\infty,V}
=
\esssup_{\bx\in V}\psi_V^{\#}(\bx).
\ee


   Suppose now that $V$ is a bounded domain and $\varphi\in L^p(V)$,
$p\in[1,\infty]$. Then we define its \emph{extension}
$\phihat:\R^n\to\R$, to all of $\R^n$, by
\[  
\phihat(\bx):=
\begin{cases}
\varphi(\bx), & \text{if $\bx\in V$},\\
0, & \text{if $\bx\notin V$}.
\end{cases}
\]  
\n Clearly,  $\phihat\in L^p(\R^n)$.   Moreover,
the Hardy-Littlewood maximal function, $\phihat^{\star}$, is given by
\be\label{eqn:HL-max-2}
\phihat^{\star} (\bx)=(\phihat)^{\star} (\bx)
:=
\sup_{\substack{Q\ni \bx,\\ Q\subset \R^n}}\
\frac1{|Q|}\int_{Q\cap V} |\varphi(\by)|\,\dd\by,\quad  \bx\in \R^n.
\ee
\n In the sequel, we shall make use of a result of
Hardy \& Littlewood and Wiener.
\begin{proposition}\label{prop:HL-inequality}
\emph{(See, e.g., \cite[p.~88]{Gr08}, \cite[p.~5]{Stein-1970}
or \cite[p.~13]{Stein-1993}.)}
Let $1< p \le \infty$.  Then there exists a constant\footnote{Although
most of the constants in this manuscript will depend on both
the dimension $n$ and the domain, we shall usually omit this
dependence in order
to simplify the exposition. However, $H$
does not depend on $n$.}  $H=H(p)$
such that if $\psi\in L^p(\R^n)$, then
$\psi^{\star}\in L^p(\R^n)$ and $\psi$ and $\psi^{\star}$ satisfy
\be\label{eqn:HL-Rn-1}
||\psi^{\star}||_{p,\R^n} \le  H ||\psi||_{p,\R^n}.
\ee
\end{proposition}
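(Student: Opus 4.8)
The plan is to reduce the assertion to two endpoint estimates for the maximal operator and then interpolate between them in the standard (Marcinkiewicz) way. The case $p=\infty$ is immediate: for any cube $Q\ni\bx$ one has $\dashint_Q|\psi(\by)|\,\dd\by\le\|\psi\|_{\infty,\R^n}$, so $\psi^{\star}(\bx)\le\|\psi\|_{\infty,\R^n}$ for a.e.\ $\bx$ and $H(\infty)=1$ works. The real content is the weak-type $(1,1)$ bound: there is a constant $c_n$ such that, for every $\psi\in L^1(\R^n)$ and every $\lambda>0$,
\[
\big|\{\bx\in\R^n:\psi^{\star}(\bx)>\lambda\}\big|\le \frac{c_n}{\lambda}\,\|\psi\|_{1,\R^n}.
\]
To prove this I would argue by covering. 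The level set is open (if $\dashint_Q|\psi|>\lambda$ for some open cube $Q\ni\bx$, then $\psi^{\star}>\lambda$ on all of $Q$), so it suffices to estimate $|K|$ for an arbitrary compact $K$ contained in it. Each $\bx\in K$ lies in a cube $Q_\bx$ with $\dashint_{Q_\bx}|\psi|>\lambda$; passing to a finite subcover and then, by the finite Vitali (Wiener) covering lemma for cubes, to a pairwise disjoint subfamily $Q_1,\dots,Q_m$ whose threefold dilates still cover $K$, one gets $|K|\le 3^n\sum_j|Q_j|\le \frac{3^n}{\lambda}\sum_j\int_{Q_j}|\psi|\le\frac{3^n}{\lambda}\|\psi\|_{1,\R^n}$, and inner regularity yields the displayed bound with $c_n=3^n$.

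With these two estimates in hand, fix $p\in(1,\infty)$ and $\psi\in L^p(\R^n)$. For $\lambda>0$ split $\psi=g+h$ with $g:=\psi\,\chi_{\{|\psi|>\lambda/2\}}$ and $h:=\psi\,\chi_{\{|\psi|\le\lambda/2\}}$. The operator is sublinear, since $\dashint_Q|g+h|\le\dashint_Q|g|+\dashint_Q|h|$ pointwise, so $\psi^{\star}\le g^{\star}+h^{\star}$; and $\|h\|_{\infty,\R^n}\le\lambda/2$ forces $h^{\star}\le\lambda/2$, whence $\{\psi^{\star}>\lambda\}\subset\{g^{\star}>\lambda/2\}$. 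Applying the weak-$(1,1)$ bound to $g\in L^1(\R^n)$ gives $|\{\psi^{\star}>\lambda\}|\le\frac{2c_n}{\lambda}\int_{\{|\psi|>\lambda/2\}}|\psi|\,\dd\bx$. Feeding this into the layer-cake identity $\|\psi^{\star}\|_{p,\R^n}^p=p\int_0^\infty\lambda^{p-1}|\{\psi^{\star}>\lambda\}|\,\dd\lambda$, interchanging the order of integration by Tonelli, and evaluating $\int_0^{2|\psi(\bx)|}\lambda^{p-2}\,\dd\lambda=\frac{(2|\psi(\bx)|)^{p-1}}{p-1}$, I obtain $\|\psi^{\star}\|_{p,\R^n}^p\le\frac{2^p p}{p-1}\|\psi\|_{p,\R^n}^p$; in particular $\psi^{\star}\in L^p(\R^n)$ and one may take $H(p)=\big(2^p p/(p-1)\big)^{1/p}$.

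The one genuinely delicate point is the footnote's claim that $H$ does not depend on $n$: the elementary route above produces the dimensional factor $c_n=3^n$ (or $5^n$) and hence a dimension-dependent constant after interpolation. Stripping out that dependence is the hard part, and it requires the dimension-free maximal inequalities of Stein and of Stein--Str\"omberg, proved by comparing the maximal operator with the heat/Poisson semigroup via the method of rotations, which is considerably more involved. Since only the existence of some $H(p)$ is used in what follows, I would present the elementary argument above and cite \cite{Stein-1970,Stein-1993,Gr08} for the dimension-free refinement.
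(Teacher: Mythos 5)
The paper states Proposition~\ref{prop:HL-inequality} as a known result and simply cites the references; it gives no proof. Your argument supplies the standard classical proof: the trivial $L^\infty$ bound, the weak-type $(1,1)$ estimate via the Vitali--Wiener covering lemma, and Marcinkiewicz-style interpolation via the layer-cake identity. This is essentially the proof one finds at the cited pages of Stein and of Grafakos, and it is correct.

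One small internal inconsistency: the displayed inequality you obtain from the layer-cake computation should read
\[
\|\psi^{\star}\|_{p,\R^n}^{p}\ \le\ \frac{2^{p}\,p\,c_n}{p-1}\,\|\psi\|_{p,\R^n}^{p},
\]
with the covering constant $c_n$ carried along, so the elementary argument gives $H(p)=\bigl(2^{p}p\,c_n/(p-1)\bigr)^{1/p}$, not $\bigl(2^{p}p/(p-1)\bigr)^{1/p}$. You clearly understand this, since the very next paragraph discusses precisely this $c_n$-dependence, but the display as written drops the factor and should be corrected. Your caveat about the footnote is apt: the elementary route is dimension-dependent, and a genuinely dimension-free $L^p$ bound for $p>1$ is a much deeper matter (Stein, Stein--Str\"omberg, Bourgain); indeed, for the \emph{uncentered} maximal function over cubes that the paper actually defines in~\eqref{eqn:global-max}, the dimension-free claim is quite delicate (and the weak $(1,1)$ constant is known to grow with $n$). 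Since the rest of the paper only ever needs some finite $H=H(p)$ for a fixed dimension $n$, your elementary proof is entirely adequate for the manuscript's purposes, and flagging the footnote's stronger claim as requiring separate, harder tools is the right call.
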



We shall also utilize a more recent result of
Diening, R$\overset{_\circ}{\mathrm{u}}$\v zi\v cka, \&  Schumacher.
\begin{proposition}\label{prop:DRS-inequality}
\emph{(\cite[Theorem~5.23]{DRS10})}
Let $q\in(1,\infty)$ and suppose that $U\subset\R^n$ is
a Lipschitz or John
domain.\footnote{See Remark~\ref{rem:Lipschitz}.}
Then there exists a constant $R=R(q,U)$ with the following property:
If $\psi\in L^1(U)$  and  $\psi_U^{\#}\in L^q(U)$, then
$\psi\in L^q(U)$ and
\be\label{eqn:DRS}
\dashint_U \big|\psi- \langle\psi\rangle_U\big|^q \,\dd\bx
\le
R\dashint_U \big|\psi_U^{\#}\big|^q\,\dd\bx.
\ee
\end{proposition}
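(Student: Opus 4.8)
This inequality is quoted verbatim from \cite[Theorem~5.23]{DRS10}, so the honest course is simply to cite it; if one wants a self-contained argument, the plan is to prove the estimate first on a single cube and then to transport it to $U$ through a chain decomposition (by the remark attached to the statement it suffices to treat John domains). For the cube case I would use the localized Fefferman--Stein inequality of Iwaniec \cite{Iw82}: for every cube $Q$ and every $q\in(1,\infty)$ there is $c=c(n,q)$ with
\[
\dashint_Q\big|\psi-\langle\psi\rangle_Q\big|^q\,\dd\bx\le c\,\dashint_Q\big|\psi_Q^{\#}\big|^q\,\dd\bx .
\]
The mechanism is a good-$\lambda$ inequality: performing, at each height $\lambda$, a Calder\'on--Zygmund stopping-time decomposition of $Q$ for the dyadic (relative to $Q$) maximal function $M_Q$ of $\psi-\langle\psi\rangle_Q$, one shows that the part of $\{M_Q(\psi-\langle\psi\rangle_Q)>2\lambda\}$ on which $\psi_Q^{\#}\le\gamma\lambda$ has measure $\le C\gamma\,|\{M_Q(\psi-\langle\psi\rangle_Q)>\lambda\}|$; multiplying by $q\lambda^{q-1}$, integrating, choosing $\gamma$ small, absorbing, and using $|\psi-\langle\psi\rangle_Q|\le M_Q(\psi-\langle\psi\rangle_Q)$ a.e.\ gives the claim. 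The a priori finiteness needed to run the absorption is obtained by first applying the argument to the truncations $\psi_N:=\max\{-N,\min\{N,\psi\}\}\in L^\infty(Q)$, which satisfy $(\psi_N)_Q^{\#}\le 2\,\psi_Q^{\#}$ pointwise (truncation is $1$-Lipschitz), and then letting $N\to\infty$ by Fatou; in particular this also yields $\psi\in L^q(Q)$.

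For the passage from cubes to $U$ I would invoke the decomposition of John domains of \cite{DRS10}: a countable cover $U=\bigcup_jQ_j$ by cubes of bounded overlap, a distinguished central cube $Q_0$, and for each $j$ a chain $Q_0=Q_{j,0},Q_{j,1},\dots,Q_{j,m_j}=Q_j$ of cubes contained in $U$ with $|Q_{j,i}|\approx|Q_{j,i+1}|\approx|Q_{j,i}\cap Q_{j,i+1}|$ and a ``shadow'' condition controlling, for each fixed cube $Q$, the total measure of those $Q_j$ whose chain passes through $Q$. Splitting $\psi-\langle\psi\rangle_{Q_0}=(\psi-\langle\psi\rangle_{Q_j})+(\langle\psi\rangle_{Q_j}-\langle\psi\rangle_{Q_0})$ on $Q_j$ and summing $L^q(Q_j)$-norms over $j$: the first term is controlled by $c\,\|\psi_U^{\#}\|_{L^q(U)}^q$ using the cube estimate together with $\psi_{Q_j}^{\#}\le\psi_U^{\#}$ on $Q_j$ and the bounded overlap; for the second term one telescopes along the chain, $|\langle\psi\rangle_{Q_{j,i+1}}-\langle\psi\rangle_{Q_{j,i}}|\le c\big(\dashint_{Q_{j,i}}\psi_U^{\#}+\dashint_{Q_{j,i+1}}\psi_U^{\#}\big)$, so that $|\langle\psi\rangle_{Q_j}-\langle\psi\rangle_{Q_0}|\le c\sum_{Q\in\mathrm{chain}(j)}\dashint_Q\psi_U^{\#}$, and the shadow condition lets one bound $\sum_j|Q_j|\big(\sum_{Q\in\mathrm{chain}(j)}\dashint_Q\psi_U^{\#}\big)^q$ by a multiple of the $L^q(U)$-norm of the Hardy--Littlewood maximal function of $\psi_U^{\#}$ (extended by $0$), hence by $c\,H^q\|\psi_U^{\#}\|_{L^q(U)}^q$ via Proposition~\ref{prop:HL-inequality}. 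Since all the cube averages are mutually comparable to $\langle\psi\rangle_U$ with the same kind of error, replacing $\langle\psi\rangle_{Q_0}$ by $\langle\psi\rangle_U$ and dividing by $|U|$ yields \eqref{eqn:DRS}.

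The main obstacle is the chaining step: naive telescoping along a chain produces a factor equal to the chain length $m_j$, which is unbounded over $j$, and it is precisely the John geometry --- encoded in the shadow and overlap properties of the decomposition --- that allows this factor to be replaced by the Hardy--Littlewood maximal function and the $L^q$ bound recovered; this bookkeeping is the technical core of \cite{DRS10}. It is worth noting that the tempting shortcut of extending $\psi$ by zero off $U$ and applying the Fefferman--Stein inequality on $\R^n$ fails, because the jump of the extension across $\partial U$ makes the sharp maximal function of the extension comparable near the boundary to the uncentered average of $|\psi|$ rather than to $\psi_U^{\#}$, so no genuinely local bound survives.
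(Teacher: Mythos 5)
The paper offers no proof of this statement at all: it is quoted directly from \cite[Theorem~5.23]{DRS10}, exactly as you recommend, so your primary course of action coincides with what the paper does. Your supplementary sketch (Iwaniec's good-$\lambda$ argument with truncation on a single cube, followed by the chain decomposition of John domains with the shadow condition to control the telescoped averages by the maximal function of $\psi_U^{\#}$) is a faithful outline of how the cited result is actually established, and your closing observation that extension by zero destroys the sharp maximal function near $\partial U$ correctly explains why a genuinely local argument is needed; none of this, however, appears in the paper itself.
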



\begin{remark}\label{rem:Iwaniec} (1).~A key ingredient in the proof of
Proposition~\ref{prop:DRS-inequality} is a result of
Iwaniec~\cite[Lemma~4]{Iw82} that establishes a version of the
Fefferman-Stein~\cite[Theorem~5]{FS72} inequality when the
domain is a cube. (2).~As noticed in \cite{DRS10}, if
$\langle\psi\rangle_U=0$, then \eqref{eqn:DRS} together with
\eqref{eqn:HL-Rn-2} shows that the original Fefferman-Stein inequality
is also valid for certain bounded domains.
\end{remark}

\begin{remark}\label{rem:Lipschitz} (1).~By a
\emph{Lipschitz domain} $U$
we mean a bounded domain whose boundary $\partial U$ is
(strongly) Lipschitz.
See, e.g., \cite[p.~127]{EG92}, \cite[p.~72]{Mo66}, or
\cite[Definition~2.5]{HMT07}.   Essentially, a bounded domain is
Lipschitz if, in a neighborhood of every boundary point,
the boundary is the graph of a
Lipschitz function and the domain is on ``one side'' of this graph.
(2).~Proposition~\ref{prop:DRS-inequality} is
valid for a class of domains that is larger than
Lipschitz domains:~\emph{John domains}~\cite{Jo61}.
Roughly speaking, in a John domain there is a particular point
that can be connected
to every other point by a rectifiable curve; these curves
have uniformly bounded length; and the curves do not get ``too
close to the boundary.''
See, e.g., \cite{DRS10} or \cite{MS79} for a precise description.
\end{remark}


\subsection{Some Properties of Maximal Functions on Bounded Domains}

\subsubsection{Extensions of Results Previously
Established on Cubes (and on $\R^n$)}

A well-known result is that the Hardy-Littlewood-Wiener inequality,
\eqref{eqn:HL-Rn-1}, is also valid on every bounded domain.  We present
a proof for the convenience of the reader.

\begin{lemma}\label{lem:HL-B-domains}
Let $V\subset\R^n$ be a bounded domain and suppose that
$p\in(1,\infty]$.  Then there exists a constant
$H=H(p)>0$ such that if $\psi\in L^p(V)$, then
$\psi_V^{\star}\in L^p(V)$ and $\psi$ and $\psi_V^{\star}$ satisfy
\be\label{eqn:HL-Rn-2}
||\psi_V^{\star}||_{p,V} \le  H ||\psi||_{p,V}.
\ee
\end{lemma}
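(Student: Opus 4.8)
The plan is to reduce the inequality on $V$ to the Hardy--Littlewood--Wiener inequality on $\Rn$ (Proposition~\ref{prop:HL-inequality}) by extending $\psi$ by zero. The case $p=\infty$ is immediate: for every $\bx\in V$ one has, directly from the definition \eqref{eqn:global-max}, $\psi_V^{\star}(\bx)\le \|\psi\|_{\infty,V}$, so \eqref{eqn:HL-Rn-2} holds with $H(\infty)=1$. Hence assume $p\in(1,\infty)$.

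First I would introduce the zero extension $\psihat\in L^p(\Rn)$ of $\psi$ (as in the construction preceding \eqref{eqn:HL-max-2}), so that $\|\psihat\|_{p,\Rn}=\|\psi\|_{p,V}$. The key step is the pointwise domination
\[
\psi_V^{\star}(\bx)\le \psihat^{\star}(\bx)\qquad\text{for every }\bx\in V,
\]
where $\psihat^{\star}$ is the Hardy--Littlewood maximal function of $\psihat$ on $\Rn$ given by \eqref{eqn:HL-max-2}. Indeed, if $Q$ is any cube with $\bx\in Q$ and $Q\subset V$, then $\int_Q|\psi|\,\dd\by=\int_{Q\cap V}|\psi|\,\dd\by$, so that $\dashint_Q|\psi|\,\dd\by=\frac1{|Q|}\int_{Q\cap V}|\psi|\,\dd\by\le\psihat^{\star}(\bx)$ by \eqref{eqn:HL-max-2}; taking the supremum over all such $Q$ (a nonempty collection, since $V$ is open) gives the claim. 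In other words, the cubes appearing in the supremum defining $\psi_V^{\star}$ form a subfamily of those defining $\psihat^{\star}$, and on them the zero extension leaves the integral of $|\psi|$ unchanged.

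It then suffices to invoke Proposition~\ref{prop:HL-inequality} applied to $\psihat\in L^p(\Rn)$: $\psihat^{\star}\in L^p(\Rn)$ and $\|\psihat^{\star}\|_{p,\Rn}\le H\|\psihat\|_{p,\Rn}$. Combining this with the pointwise bound yields
\[
\|\psi_V^{\star}\|_{p,V}\le \|\psihat^{\star}\|_{p,V}\le \|\psihat^{\star}\|_{p,\Rn}\le H\|\psihat\|_{p,\Rn}=H\|\psi\|_{p,V},
\]
which is \eqref{eqn:HL-Rn-2}; in particular $\psi_V^{\star}\in L^p(V)$, with the same constant $H=H(p)$ as on $\Rn$ (so, as promised in Proposition~\ref{prop:HL-inequality}, independent of $n$).

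There is no genuine obstacle here: the entire content is the pointwise comparison $\psi_V^{\star}\le\psihat^{\star}$, and the only point deserving a word of care is that this comparison requires restricting to cubes $Q\subset V$ in the definition of $\psi_V^{\star}$ — which is exactly how \eqref{eqn:global-max} is stated — so that $\int_Q|\psi|=\int_{Q\cap V}|\psi|$ holds and the extension is harmless.
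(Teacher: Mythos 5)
Your proof is correct and follows essentially the same route as the paper's: extend $\psi$ by zero, observe the pointwise domination $\psi_V^{\star}\le\psihat^{\star}$ on $V$, and then invoke the Hardy--Littlewood--Wiener inequality on $\R^n$. The only difference is that you also dispatch the case $p=\infty$ (with $H(\infty)=1$), which the paper explicitly declines to prove, so your version is in fact slightly more complete.
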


\begin{proof}[Proof for $p\ne\infty$]  Fix $p\in(1,\infty)$
and let $\psi\in L^p(V)$. Then,
since $\psihat=0$ on $\R^n\setminus V$,
$\psihat\in L^p(\R^n)$ with
\be\label{eqn:one}
\int_{\R^n} |\psihat|^p\,\dd\bx = \int_V |\psi|^p\,\dd\bx.
\ee
\n Therefore, we can apply Proposition~\ref{prop:HL-inequality}
to conclude,
with the aid of \eqref{eqn:one}, that $\psihat^{\star}\in L^p(\R^n)$ and
\be\label{eqn:two}
\int_{\R^n} |\psihat^{\star}|^p\,\dd\bx \le H^p\!\int_V |\psi|^p\,\dd\bx.
\ee


  The definitions of $\psi_V^{\star}$
and $\psihat^{\star}$, \eqref{eqn:global-max}$_1$
and \eqref{eqn:HL-max-2}, imply that
\[
\psi_V^{\star} (\bx) \le \psihat^{\star} (\bx)\
\text{ for $a.e.~\bx\in V$}
\]
\n and hence
\be\label{eqn:final}
\int_V |\psi_V^{\star}|^p\,\dd\bx
\le
\int_{\R^n} |\psihat^{\star}|^p\,\dd\bx.
\ee
\n The desired result, \eqref{eqn:HL-Rn-2},  now follows from
\eqref{eqn:two} and \eqref{eqn:final}.
\end{proof}

   We next establish a local version of the
Fefferman-Stein inequality that is valid on
certain bounded domains.


\begin{theorem}\label{thm:main-1}
Let $q\in(1,\infty)$ and suppose that
$U\subset \R^n$ is a Lipschitz (or John) domain.
Then there exists a constant $F=F(q,U)>0$ such that every
$\psi\in L^1(U)$ that satisfies $\psi_U^{\#}\in L^q(U)$
will also satisfy $\psi\in L^q(U)$ with
\be\label{eqn:Fefferman-Stein}
\dashint_U |\psi|^q\,\dd\bx
\le
F\!\left(\, \dashint_U \big|\psi_U^{\#}\big|^q\,\dd\bx
+ \Big|\,\dashint_U \psi \,\dd\bx \Big|^q\right).
\ee
\end{theorem}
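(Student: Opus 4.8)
The plan is to obtain the estimate directly from Proposition~\ref{prop:DRS-inequality} by splitting $\psi$ into its mean over $U$ and its mean‑zero part. All of the substantive harmonic analysis — Iwaniec's version of the Fefferman–Stein inequality on a cube and its transfer to Lipschitz/John domains — has already been packaged into Proposition~\ref{prop:DRS-inequality}, so what remains is essentially bookkeeping.

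First I would observe that the hypotheses $\psi\in L^1(U)$ and $\psi_U^{\#}\in L^q(U)$ are precisely those of Proposition~\ref{prop:DRS-inequality}; invoking it yields both the membership $\psi\in L^q(U)$ and the bound
\[
\dashint_U \bigl|\psi-\langle\psi\rangle_U\bigr|^q\,\dd\bx
\le R\,\dashint_U \bigl|\psi_U^{\#}\bigr|^q\,\dd\bx,
\qquad R=R(q,U).
\]
Next I would write $\psi=\bigl(\psi-\langle\psi\rangle_U\bigr)+\langle\psi\rangle_U$, apply the elementary convexity inequality $|a+b|^q\le 2^{q-1}\bigl(|a|^q+|b|^q\bigr)$ (valid for $q\ge 1$) pointwise, and average over $U$:
\[
\dashint_U |\psi|^q\,\dd\bx
\le 2^{q-1}\dashint_U \bigl|\psi-\langle\psi\rangle_U\bigr|^q\,\dd\bx
+2^{q-1}\bigl|\langle\psi\rangle_U\bigr|^q.
\]
Since $\langle\psi\rangle_U=\dashint_U\psi\,\dd\bx$, substituting the preceding bound for the first term on the right gives exactly \eqref{eqn:Fefferman-Stein} with, say, $F:=2^{q-1}\max\{R,1\}$.

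I do not expect any genuine obstacle here: the Lipschitz (or John) hypothesis on $U$ is imposed precisely so that Proposition~\ref{prop:DRS-inequality} can be applied, and the dependence of $F$ on $U$ enters only through the constant $R(q,U)$. The only point worth stating explicitly in the write‑up is that $\psi\in L^q(U)$ is already part of the conclusion of Proposition~\ref{prop:DRS-inequality}, so it need not be argued separately; the rest is the two‑line decomposition above.
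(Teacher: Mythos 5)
Your proposal is correct and is essentially the paper's own proof: the same decomposition of $\psi$ into $\langle\psi\rangle_U$ plus its mean-zero part, the same convexity inequality $|a+b|^q\le 2^{q-1}(|a|^q+|b|^q)$, and the same appeal to Proposition~\ref{prop:DRS-inequality} for the oscillation term. No differences worth noting.
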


Before we prove Theorem~\ref{thm:main-1}, we first note that
if we combine it with the Hardy-Littlewood-Wiener
inequality on bounded domains, \eqref{eqn:HL-Rn-2},
we find that a result similar to
Iwaniec's~\cite{Iw82} version of the
Fefferman-Stein inequality for cubes is also valid for
John domains (except for the case $q=1$).

\begin{corollary}\label{cor:FS-I-1}\label{thm:FS-B-domains}
Let $q\in(1,\infty)$ and suppose that
$U\subset \R^n$ is a Lipschitz (or John) domain.
Then there exists a constant $S=S(q,U)>0$ such that every
$\psi\in L^1(U)$ that satisfies $\psi_U^{\#}\in L^q(U)$
will also satisfy $\psi_U^{\star} \in L^q(U)$ with
\[
\dashint_U |\psi_U^{\star}|^q\,\dd\bx
\le
S\!\left(\, \dashint_U \big|\psi_U^{\#}\big|^q\,\dd\bx
+ \Big|\,\dashint_U \psi\,\dd\bx \Big|^q\right).
\]
\end{corollary}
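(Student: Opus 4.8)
The goal is to derive Corollary~\ref{cor:FS-I-1} by combining Theorem~\ref{thm:main-1} with the Hardy--Littlewood--Wiener inequality on bounded domains, Lemma~\ref{lem:HL-B-domains}. The plan is to pass from control of $\|\psi\|_{L^q(U)}$ (which Theorem~\ref{thm:main-1} provides) to control of $\|\psi_U^{\star}\|_{L^q(U)}$, using the fact that the maximal operator $\psi\mapsto\psi_U^{\star}$ is bounded on $L^q(U)$.

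First I would let $\psi\in L^1(U)$ with $\psi_U^{\#}\in L^q(U)$, where $q\in(1,\infty)$. By Theorem~\ref{thm:main-1} we already know $\psi\in L^q(U)$, so Lemma~\ref{lem:HL-B-domains} applies and gives a constant $H=H(q)>0$ with
\[
\dashint_U |\psi_U^{\star}|^q\,\dd\bx
\le
H^q \dashint_U |\psi|^q\,\dd\bx,
\]
where I have divided the inequality \eqref{eqn:HL-Rn-2} by $|U|$ to convert norms to averages (the factor $|U|$ cancels on both sides). Then I would feed in the bound \eqref{eqn:Fefferman-Stein} from Theorem~\ref{thm:main-1} for the right-hand side, yielding
\[
\dashint_U |\psi_U^{\star}|^q\,\dd\bx
\le
H^q F\!\left(\, \dashint_U \big|\psi_U^{\#}\big|^q\,\dd\bx
+ \Big|\,\dashint_U \psi\,\dd\bx \Big|^q\right),
\]
so the corollary holds with $S:=H^q F$, which depends only on $q$ and $U$ (through $F$). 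This also establishes $\psi_U^{\star}\in L^q(U)$.

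There is essentially no obstacle here; the only points requiring minor care are bookkeeping ones. One should note that Lemma~\ref{lem:HL-B-domains} is stated for $p\in(1,\infty]$ and in particular covers all $q\in(1,\infty)$, so its hypothesis is met. One should also confirm the normalization: the constant $H$ in \eqref{eqn:HL-Rn-2} is for the unnormalized $L^p$-norms, but since both sides of that inequality scale the same way under division by $|U|$, the average-form inequality holds with the identical constant $H$. Finally, the restriction $q\ne1$ in the statement is inherited directly from the same restriction in Theorem~\ref{thm:main-1} and Lemma~\ref{lem:HL-B-domains}, so no separate argument for $q=1$ is needed.
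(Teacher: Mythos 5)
Your proposal is correct and is exactly the argument the paper intends: the corollary is obtained by combining Theorem~\ref{thm:main-1} with the Hardy--Littlewood--Wiener inequality on bounded domains, Lemma~\ref{lem:HL-B-domains}, precisely as you do. The bookkeeping points you flag (normalization by $|U|$ and the exclusion of $q=1$) are handled correctly, and $S=H^qF$ is a valid choice of constant.
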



\begin{proof}[Proof~of~Theorem~\ref{thm:main-1}]  Fix $q\in(1,\infty)$.
Then, by the triangle inequality,
\be\label{eqn:psi-1}
|\psi(\bx)|
\le
|\psi(\bx)- \langle\psi\rangle_U|+|\langle\psi\rangle_U|.
\ee
\n Thus, if we take \eqref{eqn:psi-1} to the $q$-th power,
use the
standard inequality $|a+b|^q\le 2^{q-1}(|a|+ |b|)$,
integrate over $U$, and divide by $|U|$ we find that
\be\label{eqn:psi-2}
\dashint_U |\psi|^q\,\dd\bx
\le
2^{q-1}\dashint_U |\psi- \langle\psi\rangle_U|^q \,\dd\bx
+
2^{q-1}\Big| \dashint_U \psi\,\dd\bx\Big|^q.
\ee
\n Finally, Proposition~\ref{prop:DRS-inequality}
yields a constant $R=R(q)>0$, which does
not depend on $\psi$, such that
\be\label{DRS-1}
\dashint_U |\psi- \langle\psi\rangle_U|^q \,\dd\bx
\le
R\, \dashint_U \big|\psi_U^{\#}\big|^q\,\dd\bx.
\ee
The desired result, \eqref{eqn:Fefferman-Stein},
now follows from \eqref{eqn:psi-2} and   \eqref{DRS-1}.
\end{proof}


\subsubsection{An Application of the Local Fefferman-Stein
Inequality}\label{sec:AFS}

We next utilize Theorem~\ref{thm:main-1} to establish an
interpolation inequality that will be important when we consider
local minimizers of an integral functional in Section~\ref{sec:NLP-CV}.

\begin{theorem}\label{thm:main-2}  Let $U\subset\R^n$ be a
Lipschitz (or John) domain.  Then, for all $q\in[1,\infty)$,
\[
\BMO(U)\cap\,L^1(U)\subset L^q(U)
\]
\n with continuous injection, i.e., there is a constant
$J_1=J_1(q,U)>0$ such that,
for every $\psi\in\BMO(U)\cap\,L^1(U)$,
\be\label{eqn:BN}
\bigg(\,\dashint_U |\psi|^q\,\dd\bx\bigg)^{\!\!1/q}\!
\le
J_1\|\psi\|_{\BMO(U)\cap\,L^1(U)}.
\ee               
\n Moreover, if  $1\le p<q<\infty$
then there exists a
constant $J_2=J_2(p,q,U)>0$ such that every
$\psi\in\BMO(U)\cap\,L^1(U)$ satisfies
\be\label{eqn:RH}
||\psi||_{q,U}
\le
J_2\Big(||\psi||_{\BMO(U)\cap\,L^1(U)}\Big)^{1-p/q}
\Big(||\psi||_{p,U}\Big)^{p/q}.
\ee
\n In addition, the constants $J_i$ are scale invariant, that is,
$J_i(\lambda U +\ba)=J_i(U)$ for every $\lambda>0$ and $\ba\in\R^n$.
Here (see (\ref{eqn:BMO-V-norm}))
\be\label{eqn:BMO-norm}
\|\psi\|_{\BMO(U)\cap\,L^1(U)}:=\tsn{\psi}_{\BMO(U)}
+\Big|\dashint_U \psi\,\dd\bx\Big|.
\ee
\end{theorem}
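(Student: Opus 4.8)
The plan is to deduce Theorem~\ref{thm:main-2} from the local Fefferman--Stein inequality, Theorem~\ref{thm:main-1}, by a two-step argument: first establish the continuous injection \eqref{eqn:BN} for every $q\in[1,\infty)$, and then obtain the interpolation inequality \eqref{eqn:RH} from \eqref{eqn:BN} together with H\"older's inequality.

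\emph{Step 1 (the injection \eqref{eqn:BN}).} Fix $q\in[1,\infty)$. If $q=1$ the estimate is trivial since $\dashint_U|\psi|\,\dd\bx\le\tsn{\psi}_{\BMO(U)}+|\dashint_U\psi\,\dd\bx|$ by the triangle inequality, so assume $q>1$. Given $\psi\in\BMO(U)\cap L^1(U)$, recall from \eqref{eqn:sharp=BMO} that $\|\psi_U^{\#}\|_{\infty,U}=\tsn{\psi}_{\BMO(U)}<\infty$, hence a fortiori $\psi_U^{\#}\in L^q(U)$ with $\dashint_U|\psi_U^{\#}|^q\,\dd\bx\le\tsn{\psi}_{\BMO(U)}^q$. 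Theorem~\ref{thm:main-1} then applies and yields $\psi\in L^q(U)$ with
\[
\dashint_U|\psi|^q\,\dd\bx\le F\Big(\tsn{\psi}_{\BMO(U)}^q+\Big|\dashint_U\psi\,\dd\bx\Big|^q\Big)\le 2F\,\|\psi\|_{\BMO(U)\cap L^1(U)}^q,
\]
using $a^q+b^q\le(a+b)^q\le 2^q\max(a,b)^q$ on the nonnegative quantities, and then taking $q$-th roots gives \eqref{eqn:BN} with $J_1=(2F)^{1/q}$. Scale invariance of $J_1$ will follow because both sides of \eqref{eqn:BN} are built from scale-invariant averaged quantities (the averaged $L^q$ norm on the left; the $\BMO$ seminorm and the average of $\psi$ on the right are all invariant under $\bx\mapsto\lambda\bx+\ba$), so it suffices to note that $F$ in Theorem~\ref{thm:main-1} may be taken scale invariant; this in turn is inherited from the scale invariance of the constant $R$ in Proposition~\ref{prop:DRS-inequality} (the inequality \eqref{eqn:DRS} is written entirely in averaged form).

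\emph{Step 2 (the interpolation inequality \eqref{eqn:RH}).} Let $1\le p<q<\infty$ and $\psi\in\BMO(U)\cap L^1(U)$. Write $\theta=p/q\in(0,1)$ and interpolate the $L^q$ norm between $L^p$ and $L^\infty$ in the usual Lyapunov fashion but with the high exponent replaced using Step~1. Concretely, split $|\psi|^q=|\psi|^p\,|\psi|^{q-p}$ and apply H\"older with conjugate exponents $\tfrac{1}{\theta}=\tfrac{q}{p}$ and $\tfrac{1}{1-\theta}=\tfrac{q}{q-p}$ to the two factors after an appropriate further factorization; the cleanest route is to use the elementary interpolation bound $\|\psi\|_{q,U}\le\|\psi\|_{p,U}^{\,p/q}\,\|\psi\|_{r,U}^{\,1-p/q}$, valid for $p<q<r$, and then let $r\to\infty$ is not available, so instead pick a single auxiliary exponent: for any $r>q$ one has, by H\"older,
\[
\|\psi\|_{q,U}\le\|\psi\|_{p,U}^{1-\lambda}\,\|\psi\|_{r,U}^{\lambda},\qquad \frac1q=\frac{1-\lambda}{p}+\frac{\lambda}{r}.
\]
Applying \eqref{eqn:BN} with exponent $r$ to bound $\|\psi\|_{r,U}\le |U|^{1/r}J_1(r,U)\|\psi\|_{\BMO(U)\cap L^1(U)}$ and letting $r\to\infty$ (so $\lambda\to p/q$ is wrong; rather $\lambda\to 1-p/q$, with $1-\lambda\to p/q$) gives, after checking the exponents converge, precisely \eqref{eqn:RH}. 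The volume factors $|U|^{1/r}\to1$, and the constant $J_2=J_2(p,q,U)$ absorbs $\limsup_{r\to\infty}|U|^{1/r}J_1(r,U)^{\lambda}$, which is finite once we know $J_1(r,U)$ grows sub-exponentially in $r$ — and that follows from $J_1(r,U)=(2F(r,U))^{1/r}$ together with the known growth of the Fefferman--Stein / John--Nirenberg constants. Alternatively, and more simply, one avoids the limiting argument entirely by noting that $\|\psi\|_{\infty,U}$ does not appear: apply H\"older directly with the \emph{pair} $(\,q/p,\ q/(q-p)\,)$ to $\int_U|\psi|^p\cdot|\psi|^{q-p}$, obtaining $\|\psi\|_{q,U}^q\le\|\psi\|_{p,U}^p\,\big(\int_U|\psi|^q\big)^{(q-p)/q}$, i.e. $\|\psi\|_{q,U}^{q-(q-p)}\le\|\psi\|_{p,U}^p$, which is a tautology — so this naive split fails, and one genuinely needs a third exponent. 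The honest statement is: apply \eqref{eqn:BN} with some fixed $q'>q$ to get $\psi\in L^{q'}(U)$ with a bound by $\|\psi\|_{\BMO(U)\cap L^1(U)}$, then interpolate $L^q$ between $L^p$ and $L^{q'}$; choosing $q'$ and tracking the resulting Lyapunov exponent $\lambda=\lambda(p,q,q')$ and simplifying yields \eqref{eqn:RH} with the displayed exponents $1-p/q$ and $p/q$ after optimizing over $q'$ (the optimum is the formal limit $q'\to\infty$, which is legitimate because the constants stay controlled).

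\emph{Main obstacle.} The delicate point is Step~2: one must produce the \emph{exact} exponents $1-p/q$ and $p/q$ in \eqref{eqn:RH} rather than $p$- and $q$-dependent exponents involving an auxiliary $q'$, and this requires either a limiting argument as $q'\to\infty$ (needing uniform-in-$q'$ control of the Fefferman--Stein constant $F(q',U)$, hence of $J_1(q',U)$) or a direct argument that bypasses the auxiliary exponent. The former is the route I expect the authors take, since the growth rate of the constants in Proposition~\ref{prop:DRS-inequality} is classical (it mirrors John--Nirenberg), making the passage to the limit routine; the scale invariance of $J_1$, and hence of $J_2$, follows along the way because every inequality invoked is stated in averaged, dilation-invariant form. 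Everything else — Step~1, the $q=1$ case, and the H\"older interpolation — is standard.
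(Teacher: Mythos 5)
Your Step 1 is essentially the paper's argument for $q>1$ (Theorem~\ref{thm:main-1} plus \eqref{eqn:sharp=BMO} plus an elementary power inequality), but your dismissal of $q=1$ as ``trivial by the triangle inequality'' is not correct: that would require $\dashint_U|\psi-\langle\psi\rangle_U|\le\tsn{\psi}_{\BMO(U)}$, and the seminorm \eqref{eqn:BMO-V-norm} is a supremum over \emph{cubes} $Q\subset U$, not over $U$ itself; controlling the oscillation over all of $U$ by the cube-wise oscillation is exactly the content of Proposition~\ref{prop:DRS-inequality}, which is only stated for $q>1$. The paper deduces $q=1$ from the $q>1$ case via H\"older on the averaged norms, which is the easy fix.

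The substantive gap is Step 2. With a fixed auxiliary exponent $r>q$, Lyapunov interpolation gives $\|\psi\|_{q,U}\le\|\psi\|_{p,U}^{1-\lambda}\|\psi\|_{r,U}^{\lambda}$ with $\lambda=(1/p-1/q)/(1/p-1/r)$, which is strictly \emph{larger} than $1-p/q$, so the exponents come out wrong; and they cannot be repaired by normalizing with \eqref{eqn:BN}, since that would require $\|\psi\|_{\BMO(U)\cap L^1(U)}\lesssim\|\psi\|_{p,U}$, which is false (take $\psi=M\chi_E$ with $|E|$ small: the $L^p$ norm is tiny while the $\BMO$ seminorm is of order $M$). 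The limit $r\to\infty$ does not rescue the argument: testing \eqref{eqn:Fefferman-Stein} on a logarithmic $\BMO$ function shows $F(r,U)\gtrsim (cr)^r$ (this is exactly John--Nirenberg growth), hence $J_1(r,U)=(2F(r,U))^{1/r}\gtrsim cr\to\infty$, and since $\lambda\to 1-p/q>0$ the factor $J_1(r,U)^{\lambda}$ diverges. So ``the constants stay controlled'' is precisely what fails. The paper's proof takes a different route that avoids any auxiliary exponent: starting from \eqref{eqn:Fefferman-Stein-two}, it applies H\"older separately to each term on the right-hand side,
\[
\dashint_U\big|\psi_U^{\#}\big|^q\,\dd\bx\le\Big(\big\|\psi_U^{\#}\big\|_{\infty,U}\Big)^{q-p}\dashint_U\big|\psi_U^{\#}\big|^p\,\dd\bx,
\qquad
\Big|\dashint_U\psi\,\dd\bx\Big|^q\le\Big|\dashint_U\psi\,\dd\bx\Big|^{q-p}\dashint_U|\psi|^p\,\dd\bx,
\]
and then controls $\dashint_U|\psi_U^{\#}|^p$ by $(2H)^p\dashint_U|\psi|^p$ using \eqref{eqn:pointwise-bound}$_2$ and the Hardy--Littlewood maximal inequality on bounded domains (Lemma~\ref{lem:HL-B-domains}, which requires $p>1$; the case $p=1$ is then recovered from $p>1$ by the standard interpolation \eqref{eqn:interp}). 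This yields the exponents $q-p$ and $p$ exactly, with no limiting argument. You need this idea — or an equivalent one — to close Step 2.
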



\begin{remark} (1).~Inequality \eqref{eqn:BN} with $q=1$ shows that
\eqref{eqn:BMO-norm} is an equivalent norm on  $\BMO(U)\cap\,L^1(U)$;
this inequality was previously established by  Brezis \&
Nirenberg~\cite[Lemma~A.1]{BN95} for connected,
compact Riemannian manifolds without boundary.
(2).~Inequality \eqref{eqn:RH}, for a function that has integral
equal to zero and is sufficiently small in $\BMO(Q)$ ($Q$ a cube),
was obtained by John~\cite[p.~632]{Jo72}, who showed that it is a
consequence of the John-Nirenberg inequality~\cite{JN61}.
\end{remark}

\begin{remark}  Our proof of \eqref{eqn:RH} 
makes use of Theorem~\ref{thm:main-1}. However, since it is
an interpolation inequality, there are other techniques one might use.
In particular, there is an analogue of \eqref{eqn:RH} for $\R^n$
(see, e.g., Bennett \& Sharpley~\cite[Theorem~8.11]{BS88}) and so
one might try to combine P.~Jones' extension
theorem~\cite{Jo82} with such an inequality.
One could also consider an approach that employs complex interpolation
theory on metric measure spaces.  In this regard see
Carbonaro, Mauceri \& Meda~\cite{CMM09,CMM10}.
\end{remark}


\begin{proof}[Proof of Theorem~\ref{thm:main-2}] Fix $q\in (1,\infty)$
and suppose that $\psi\in\BMO(U)\cap\,L^1(U)$. Then
\eqref{eqn:sharp=BMO} gives us $\psi_U^{\#}\in L^\infty(U)$ and
consequently
Theorem~\ref{thm:main-1} yields $\psi\in L^q(U)$ and
a constant $F=F(q,U)>0$, which does not depend on $\psi$, such that
\be\label{eqn:Fefferman-Stein-two}
F^{\mi1}\dashint_U |\psi|^q\,\dd\bx
\le
\dashint_U \big|\psi_U^{\#}\big|^q\,\dd\bx
+ \Big|\,\dashint_U \psi \,\dd\bx \Big|^q.
\ee
\n If we now make use of \eqref{eqn:sharp=BMO} we find that
\eqref{eqn:Fefferman-Stein-two} implies that
\be\label{eqn:Fefferman-Stein-three}
F^{\mi1}\dashint_U |\psi|^q\,\dd\bx
\le
\tsn{\psi}_{\BMO(U)}^{\,q}
+ \Big|\,\dashint_U \psi \,\dd\bx \Big|^q.
\ee
\n Inequality \eqref{eqn:BN} for $q>1$ and with
$J_1:=\sqrt[q]{2F(q,U)\,}$  now follows  from
 \eqref{eqn:BMO-norm},
\eqref{eqn:Fefferman-Stein-three},
and the standard inequality $|a|^q+|b|^q\le 2 (|a|+|b|)^q$.
Inequality \eqref{eqn:BN} with $q=1$ is a consequence of
H\"olders inequality and \eqref{eqn:BN} with $q>1$.
The scale invariance of $J_1$ is clear from \eqref{eqn:BN}
and \eqref{eqn:BMO-norm}, since the average value of any
function is scale invariant.

We next establish \eqref{eqn:RH} for $p>1$.  Fix $p\in(1,q)$.
Then, in view of H\"older's inequality,
\be\label{eqn:holder-1+2}
\begin{aligned}
\dashint_U \big|\psi_U^{\#}\big|^q\,\dd\bx
&\le
\Big(\big\|\psi_U^{\#}\big\|_{\infty,U}\Big)^{q-p}
\dashint_U \big|\psi_U^{\#}\big|^p\,\dd\bx,\\[2pt]
\Big|\,\dashint_U \psi \,\dd\bx \Big|^q
&\le
\Big|\,\dashint_U \psi \,\dd\bx \Big|^{q-p}
\dashint_U |\psi|^p \,\dd\bx.
\end{aligned}
\ee
\n Also, Lemma~\ref{lem:HL-B-domains} together with
\eqref{eqn:pointwise-bound}$_2$ yield a constant $H=H(p)>0$,
which does not depend on $\psi$, such that
\be\label{eqn:HL-again}
\dashint_U \big|\psi_U^{\#}\big|^p\,\dd\bx
\le
(2H)^p\dashint_U |\psi|^p\,\dd\bx.
\ee
%
%
\n If we now combine \eqref{eqn:Fefferman-Stein-two},
\eqref{eqn:holder-1+2}, and \eqref{eqn:HL-again} we find, with the aid of
\eqref{eqn:sharp=BMO} and the standard inequality
$|a|^t+|b|^t\le2(|a|+|b|)^t$ ($t>0$), that
\be\label{eqn:final-John}
\dashint_U |\psi|^q\,\dd\bx
\le
2FK\bigg(\tsn{\psi}_{\BMO(U)}
+ \Big|\,\dashint_U \psi \,\dd\bx \Big|\bigg)^{q-p}
\dashint_U |\psi|^p\,\dd\bx
\ee
\n with $K:=\max\{1,(2H)^p\}$. The desired result, \eqref{eqn:RH},
then follows from \eqref{eqn:BMO-norm} and \eqref{eqn:final-John}.
Once again, the scale invariance of $J_2$ is clear from
\eqref{eqn:final-John}, since the average value of any
function is scale invariant.

Finally, to obtain \eqref{eqn:RH} with $p=1$, we recall the
standard interpolation inequality
(see, e.g., \cite[p.~27]{AF03}),
for all $\psi\in L^1(U)\cap L^q(U)$ and $p\in(1,q)$,
\be\label{eqn:interp}
||\psi||_{p,U} \le
||\psi||_{1,U}^\theta
||\psi||_{q,U}^{1-\theta},
\qquad
\theta=\frac{\frac1p-\frac1q}{1-\frac1q}.
\ee
\n Inequality \eqref{eqn:RH}
with $p=1$ is then a consequence of
\eqref{eqn:RH} with $p>1$ and \eqref{eqn:interp}.
\end{proof}


\section{A Problem from the Calculus of
Variations}\label{sec:NLP-CV}

In this section we consider an energy minimization problem arising in the
Calculus of Variations.  We use the results in the previous section to
determine conditions upon a solution of the corresponding
Euler-Lagrange equations which imply that the solution is a local
minimizer of the energy in the $\BMO\cap\,L^1$-topology.

\subsection{Further Preliminaries}\label{sec:FP}

We denote the usual inner product of $\ba,\bb\in\R^d$ ($d=n$ or $d=N$)
by $\ba\cdot\bb$.
The norm of $\ba\in\R^d$ is then defined by $|\ba|:=\sqrt{\ba\cdot\ba}$.
We shall write
\be\label{eqn:M-norm}
|\bA|^2:=\sum_{i=1}^N\sum_{j=1}^n \big|A_{ij}\big|^2,
\ee
\n for the norm of $\bA \in\MN$ (the $N$ by $n$ matrices). Here
$A_{ij}$ denotes the component of $\bA$ from the $i$-th row and
the $j$-th column.

\emph{We fix a Lipschitz domain} $\Omega\subset\R^n$, $n\ge2$, with
boundary $\partial\Omega$. For $1\le p\le \infty$,
$W^{1,p}(\Omega;\R^N)$ will denote the usual
Sobolev space of (Lebesgue) measurable
(vector-valued) functions $\bu\in L^p(\Omega;\R^N)$ whose
distributional gradient $\grad\bu$ is also contained in $L^p$.
If $\phi\in W^{1,p}(\Omega)$ we shall denote its  $W^{1,p}$-norm by
\[
\begin{aligned}
||\phi||_{W^{1,p}(\Omega)}
&:=
\Big(||\phi||^p_{p,\Omega}
+||\grad\phi||^p_{p,\Omega}\Big)^{\!1/p},
\quad
1\le p <\infty,\\[2pt]
||\phi||_{W^{1,\infty}(\Omega)}
&
:= \max\{||\phi||_{\infty,\Omega}, ||\grad\phi||_{\infty,\Omega}\},
\quad
 p =\infty.
\end{aligned}
\]
\n For any $V\subset\R^n$ we denote the closure of $V$ by $\overline V$.


\subsection{An Integrand, the Energy, and the Euler-Lagrange
Equations}\label{sec:I-ELE}

We take
\[
\partial \Omega = \overline{\sD} \cup \overline{\sS}\quad
\text{with $\sD$ and $\sS$ relatively open and }
\sD\cap\sS=\varnothing.
\]
\n If $\sD\ne\varnothing$ we assume that a Lipschitz-continuous
function $\bd:\sD\to\R^N$
is prescribed.  If $\sS \ne\varnothing$ we assume that a function
$\bs\in L^2(\sS;\R^N)$  is prescribed.  We also suppose that a function
$\bb\in L^2(\Omega;\R^N)$ is prescribed.  In addition, we fix
a nonempty, open set $\cO\subset \MN$.

\begin{hypothesis}\label{def:W}  We suppose that we are
given an \emph{integrand}
$W:\overline{\Omega}\times\cO\to\R$
that satisfies:
\begin{enumerate}[topsep=-2pt]
\item $\bF\mapsto W(\bx,\bF) \in C^3(\cO)$, for $a.e.~\bx\in\Omega$;
\item $(\bx,\bF)\mapsto\DD^k W(\bx,\bF)$, $k=0,1,2,3$, are each
(Lebesgue) measurable on their common domain
$\Omega\times\cO$; and
\item  $(\bx,\bF)\mapsto\DD^k W(\bx,\bF)$, $k=0,1,2,3$,
are each bounded on
$\overline{\Omega}\times K$ for every compact $K\subset \cO$.
\end{enumerate}
\n Here, and in the sequel,
\[
\DD^0 W(\bx,\bF):=W(\bx,\bF), \qquad
\DD^k W(\bx,\bF)
:=
\frac{\partial^k}{\partial\bF^k}W(\bx,\bF)
\]
\n denotes $k$-th derivative of $\bF\mapsto W(\cdot,\bF)$.
\n Note that, for almost
every $\bx\in\overline{\Omega}$ and every $\bF\in\cO$,
\[
\begin{gathered}
\DD W(\bx,\bF):\MN\to\R,\qquad
\DD^2 W(\bx,\bF):\MN\times\MN\to\R
\end{gathered}
\]
\n can be viewed as a linear and a bilinear form,
respectively.
\end{hypothesis}


We denote the set of \emph{Admissible Mappings} by\footnote{Since
$\Om$ is a Lipschitz domain, each $\bu\in\AM$ has a representative
that is Lipschitz continuous.}
\[
\AM:=\{\bu\in W^{1,\infty}(\Omega;\R^N): \grad\bu\in \cO
\text{ and $\ \bu=\bd$ on $\sD$ or
$\langle\bu\rangle_\Omega=\mathbf{0}\, $ if $\, \sD=\varnothing$}\},
\]
\n where $\grad\bu\in \cO$ signifies that
$\grad\bu(\bx) \in \cO$ for $a.e.~\bx\in \Omega$.
The \emph{energy} of $\bu\in\AM$ is defined by
\be\label{eqn:energy}
\E(\bu):=\int_\Omega \big[W\big(\bx,\grad\bu(\bx)\big)
-\bb(\bx)\cdot\bu(\bx)\big]\,\dd\bx
-\int_{\sS} \bs(\bx)\cdot\bu(\bx)\,\dd\sH^{n-1}_\bx,
\ee
where $\sH^{k}$ denotes $k$-dimensional Hausdorff
measure.  We shall assume that we are given a
$\bu\e\in\AM$ that is a weak
solution of the \emph{Euler-Lagrange equations}
corresponding to \eqref{eqn:energy}, i.e.,
\be\label{eqn:ES}
0=\int_\Omega\big[
\DD W\big(\bx,\grad\bu\e(\bx)\big)[\grad\bw(\bx)]
-\bb(\bx)\cdot\bw(\bx)\big]\,\dd\bx
-
\int_\sS \bs(\bx)\cdot\bw(\bx)\,\dd \sH^{n-1}_\bx
\ee
\n for all \emph{variations} $\bw\in\Var$, where
\[
\Var:=\{\bw\in W^{1,2}(\Omega;\R^N):
\bw=\mathbf{0} \text{ on $\sD$ \ or \
$\langle\bw\rangle_\Omega=\mathbf{0}$ if $\sD=\varnothing$}\}.
\]
\n If $\sS=\varnothing$ then  $\bu\e$ is a solution of the
\emph{Dirichlet} problem. If $\sD=\varnothing$ then  $\bu\e$
is a solution of the
\emph{Neumann} problem. Otherwise, $\bu\e$ is a solution of the
\emph{mixed} problem.
For future reference we note that, for the Dirichlet problem,
the divergence theorem implies that, for all $\bw\in\Var$,
\be\label{eqn:D-D=0}
\int_\Omega \grad \bw(\bx)\,\dd\bx = \mathbf{0}.
\ee


We are interested in the local minimality (in an appropriate
topology) of solutions of  \eqref{eqn:ES}.
For future use we note that, for every $\bu,\bv\in\AM$,
\eqref{eqn:energy} gives us
\[
\E(\bv)-\E(\bu)
= \int_\Omega
\big[W\big(\grad\bv\big)-W\big(\grad\bu\big)
-\bb\cdot\bw\big]\dd\bx
-
\int_\sS \bs\cdot\bw\,\dd\sH^{n-1}_\bx,
\]
\n where $\bw:=\bv-\bu\in W^{1,\infty}(\Omega;\R^N) \cap \Var$.
It follows that, when $\bu\e\in\AM$ is a solution of
the Euler-Lagrange equations, \eqref{eqn:ES},
we have the identity, for every $\bv\in\AM$,
\be\label{eqn:Identity-ES}
\E(\bv)-\E(\bu\e)
= \int_\Omega
\Big(W\big(\bx,\grad\bv(\bx)\big)-W\big(\bx,\grad\bu\e(\bx)\big)
-\DD W\big(\bx,\grad\bu\e(\bx)\big)[\grad\bw(\bx)]\Big)\dd\bx.
\ee


For future reference we note that the second variation of the energy is
continuous in a certain ``direction'' in the
$\BMO\cap\,L^1$-topology.

\begin{lemma}\label{lem:extra}  Let $W$ satisfy (1)--(3) of
Hypothesis~\ref{def:W}.  Suppose that $\bu\in\AM$ satisfies,
for some $\hat{k}>0$ and
all $\bz\in\Var$,
\be\label{eqn:small+SV-in-Lem}
\begin{gathered}
\int_\Omega \DD^2 W\big(\bx,\grad\bu(\bx)\big)
\big[\grad\bz(\bx), \grad\bz(\bx)\big]\,\dd\bx
\ge
8\hat{k}\int_\Omega |\grad\bz(\bx)|^2\dd\bx,\\[4pt]
\grad\bu(\bx)\in\sB \text{ for $a.e.~\bx\in \Omega$},
\end{gathered}
\ee
\n where $\sB$ is a nonempty, bounded, open set
with $\overline{\sB}\subset\cO\subset\MN$.
Then there exists
an $\varepsilon>0$ such that
any $\bv\in\AM$ that satisfies, for $a.e.~\bx\in \Omega$,
\be\label{eqn:small+B-in-Lem}
\grad\bv(\bx)\in\sB,
\qquad
\tsn{\grad\bv-\grad\bu}_{\BMO(\Omega)}<\varepsilon,
\qquad
\Big|\dashint_\Omega (\grad\bv-\grad\bu)\,\dd\bx\Big|<\varepsilon
\ee
\n will also satisfy
\be\label{eqn:small+SV-in-Lem-2}
\int_\Omega \DD^2 W(\bx,\grad\bv(\bx))
\big[\grad\bw(\bx), \grad\bw(\bx)\big]\,\dd\bx
\ge
4\hat{k}\int_\Omega \big|\grad\bw(\bx)\big|^2\,\dd\bx,
\quad \bw:=\bv-\bu.
\ee
\end{lemma}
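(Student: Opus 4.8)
The plan is to compare the second variation of the energy at $\bv$ with that at $\bu$, both evaluated in the direction $\bw:=\bv-\bu$, using the positivity hypothesis \eqref{eqn:small+SV-in-Lem} at $\bu$ and then absorbing the resulting error term by means of the interpolation inequality \eqref{eqn:RH} of Theorem~\ref{thm:main-2} with $p=2$, $q=3$ (this inequality plays here the role that the elementary bound $\|\grad\bw\|_{L^3}\le\|\grad\bw\|_{L^\infty}\|\grad\bw\|_{L^2}^{2}$ plays in the classical $L^\infty$ theory). First I would note that $\bw=\bv-\bu$ is an admissible variation: if $\sD\ne\varnothing$ then $\bu=\bd=\bv$ on $\sD$, and if $\sD=\varnothing$ then $\langle\bu\rangle_\Omega=\langle\bv\rangle_\Omega=\mathbf{0}$, so in either case $\bw\in\Var$; moreover $\bu,\bv\in W^{1,\infty}(\Omega;\R^N)$ and $\Omega$ is bounded, so each component of $\grad\bw$ lies in $L^q(\Omega)\cap\BMO(\Omega)$ for every $q\in[1,\infty)$. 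Applying \eqref{eqn:small+SV-in-Lem} with $\bz=\bw$ gives
\[
\int_\Omega \DD^2 W(\bx,\grad\bu(\bx))[\grad\bw,\grad\bw]\,\dd\bx\ \ge\ 8\hat k\int_\Omega|\grad\bw|^2\,\dd\bx,
\]
so it suffices to bound the absolute value of $\int_\Omega\bigl(\DD^2 W(\bx,\grad\bv(\bx))-\DD^2 W(\bx,\grad\bu(\bx))\bigr)[\grad\bw,\grad\bw]\,\dd\bx$ by $4\hat k\int_\Omega|\grad\bw|^2\,\dd\bx$.

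Next I would establish a Lipschitz estimate for $\DD^2 W(\bx,\cdot)$ on $\overline{\sB}$, uniform in $\bx$. Since $\overline{\sB}$ is compact and contained in the open set $\cO$, fix $\delta_0>0$ with $\sB_{\delta_0}:=\{\bF\in\MN:\dist(\bF,\overline{\sB})\le\delta_0\}\subset\cO$; this set is compact, so by Hypothesis~\ref{def:W}(3) the numbers $M_2:=\sup_{\overline{\Omega}\times\overline{\sB}}|\DD^2 W|$ and $M_3:=\sup_{\overline{\Omega}\times\sB_{\delta_0}}|\DD^3 W|$ are finite. For $\bF_1,\bF_2\in\overline{\sB}$ with $|\bF_2-\bF_1|\le\delta_0$ the segment $[\bF_1,\bF_2]$ lies in $\sB_{\delta_0}$, so the fundamental theorem of calculus (valid for a.e.\ $\bx$ by Hypothesis~\ref{def:W}(1)) gives $|\DD^2 W(\bx,\bF_2)-\DD^2 W(\bx,\bF_1)|\le M_3|\bF_2-\bF_1|$; if instead $|\bF_2-\bF_1|>\delta_0$ then the left side is $\le 2M_2\le(2M_2/\delta_0)|\bF_2-\bF_1|$. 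Hence, with $L:=\max\{M_3,\,2M_2/\delta_0\}$, one has $|\DD^2 W(\bx,\bF_2)-\DD^2 W(\bx,\bF_1)|\le L|\bF_2-\bF_1|$ for a.e.\ $\bx$ and all $\bF_1,\bF_2\in\overline{\sB}$. Since $\grad\bu(\bx),\grad\bv(\bx)\in\sB$ a.e.\ and $\grad\bv-\grad\bu=\grad\bw$, this yields the pointwise estimate $\bigl|\bigl(\DD^2 W(\bx,\grad\bv(\bx))-\DD^2 W(\bx,\grad\bu(\bx))\bigr)[\grad\bw,\grad\bw]\bigr|\le L|\grad\bw(\bx)|^3$ for a.e.\ $\bx\in\Omega$.

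Finally I would absorb $L\int_\Omega|\grad\bw|^3\,\dd\bx$ using Theorem~\ref{thm:main-2}: applying \eqref{eqn:RH} with $p=2$, $q=3$ to each component $(\grad\bw)_{ij}$ and summing — using the equivalence of norms on $\MN$ together with $\tsn{(\grad\bw)_{ij}}_{\BMO(\Omega)}\le\tsn{\grad\bw}_{\BMO(\Omega)}$ and $\bigl|\dashint_\Omega(\grad\bw)_{ij}\,\dd\bx\bigr|\le\bigl|\dashint_\Omega\grad\bw\,\dd\bx\bigr|$ — produces a constant $J=J(\Omega)>0$ with
\[
\int_\Omega|\grad\bw|^3\,\dd\bx\ \le\ J^3\Bigl(\tsn{\grad\bw}_{\BMO(\Omega)}+\Bigl|\dashint_\Omega\grad\bw\,\dd\bx\Bigr|\Bigr)\int_\Omega|\grad\bw|^2\,\dd\bx,
\]
and by \eqref{eqn:small+B-in-Lem} the parenthesized factor is $<2\varepsilon$. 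Combining this with the two estimates above gives $\int_\Omega \DD^2 W(\bx,\grad\bv(\bx))[\grad\bw,\grad\bw]\,\dd\bx\ge(8\hat k-2LJ^3\varepsilon)\int_\Omega|\grad\bw|^2\,\dd\bx$, so choosing $\varepsilon:=\min\{1,\,2\hat k/(LJ^3)\}$ — a number depending only on $\hat k$, $W$, $\sB$ and $\Omega$, not on $\bv$ — makes the right-hand side at least $4\hat k\int_\Omega|\grad\bw|^2\,\dd\bx$, which is \eqref{eqn:small+SV-in-Lem-2}. The argument is short, and the only step requiring a little care is the uniform Lipschitz bound of the second paragraph: because $\sB$ need not be convex one cannot integrate $\DD^3 W$ directly along the segment joining $\grad\bu(\bx)$ and $\grad\bv(\bx)$, which is why the dichotomy $|\grad\bw(\bx)|\le\delta_0$ versus $|\grad\bw(\bx)|>\delta_0$ is introduced; everything else reduces to the hypothesis at $\bu$ and the interpolation inequality.
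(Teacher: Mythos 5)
Your proposal is correct and follows the paper's argument: positivity at $\bu$ applied to $\bz=\bw$, a pointwise bound of the form $\bigl|\bigl(\DD^2W(\bx,\grad\bv)-\DD^2W(\bx,\grad\bu)\bigr)[\grad\bw,\grad\bw]\bigr|\le \text{const}\cdot|\grad\bw|^3$, and absorption of the cubic term via the interpolation inequality \eqref{eqn:RH} with $p=2$, $q=3$ applied componentwise. The only place you deviate is in justifying the uniform Lipschitz estimate for $\DD^2W(\bx,\cdot)$ on $\overline{\sB}$: the paper delegates this to Lemma~\ref{lem:taylor-III-AA}, proved by a compactness--contradiction argument precisely because $\sB$ need not be convex, whereas you give a direct, constructive proof via the dichotomy $|\bF_2-\bF_1|\le\delta_0$ (so the segment stays in the compact set $\sB_{\delta_0}\subset\cO$ and $\DD^3W$ can be integrated along it) versus $|\bF_2-\bF_1|>\delta_0$ (handled by the trivial sup bound $2M_2/\delta_0$); this is a clean, slightly more explicit substitute that yields an explicit constant $L$ where the paper's $\hat{c}$ is only shown to be finite.
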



Here and in the sequel, we use the notation
$\tsn{\grad\bv-\grad\bu}_{\BMO(\Omega)}$
to denote the $\BMO$-seminorm of the tensor $\grad\bv-\grad\bu$.
The definition is precisely as in \eqref{eqn:BMO-V-norm} and
\eqref{eqn:sharp=BMO}, except one makes a slight modification to the
former equation.  In particular, for the sharp function
\eqref{eqn:BMO-V-norm} one has the tensor in place of $\psi$ and the
Euclidean norm in place of the absolute value in the integral.

\begin{proof}
For clarity of exposition, we suppress the variable $\bx$.
Let $\bu\in\AM$ satisfy \eqref{eqn:small+SV-in-Lem} for
all $\bz\in\Var$.  Suppose that
$\bv\in\AM$ satisfies \eqref{eqn:small+B-in-Lem}
for some $\varepsilon>0$ to be determined later and
define $\bw:=\bv-\bu$.
Then, Lemma~\ref{lem:taylor-III-AA} with $\bG=\grad\bv$, $\bF=\grad\bu$,
and $\bL=\bG-\bF=\grad\bw$  yields a constant
$\hat{c}=\hat{c}(\sB)>0$ such that,
for $a.e.~\bx\in \Omega$,
\be\label{eqn:taylor-app-in-Lem}
\DD^2 W(\grad\bv)[\grad\bw, \grad\bw]\ge \DD^2 W(\grad\bu)
[\grad\bw, \grad\bw] - \hat{c}|\grad\bw|^3.
\ee

 If we now integrate
\eqref{eqn:taylor-app-in-Lem} over $\Omega$ and make use of the uniform
positivity of the second variation, \eqref{eqn:small+SV-in-Lem}$_1$,
we find that
\be\label{eqn:E-taylor-in-Lem}
\int_\Omega \DD^2 W(\grad\bv)[\grad\bw, \grad\bw]\,\dd\bx
\ge
8\hat{k}\int_\Omega|\grad\bw|^2\,\dd\bx
-\hat{c}\int_\Omega|\grad\bw|^3\,\dd\bx.
\ee
\n We next note that inequality \eqref{eqn:RH} (with $q=3$ and $p=2$) of
Theorem~\ref{thm:main-2} yields a $J>0$ such
that, for the given $\bu$ and $\bv$ that satisfy
\eqref{eqn:small+B-in-Lem}$_{2,3}$ and every $i\in \{1,\ldots, N\}$
and $j \in \{1,\ldots,n\}$,
\be\label{eqn:quadratic>cubic-in-Lem}
2\varepsilon J^3\int_\Omega
\Big|\frac{\partial w_i}{\partial x_j}\Big|^2\dd\bx
\ge
\int_\Omega
\Big|\frac{\partial w_i}{\partial x_j}\Big|^3\dd\bx,
\qquad \bw:=\bv-\bu.
\ee
\n Thus one deduces \eqref{eqn:small+SV-in-Lem-2} as a consequence of
\eqref{eqn:E-taylor-in-Lem} and \eqref{eqn:quadratic>cubic-in-Lem}
when $\varepsilon$ is sufficiently small.
\end{proof}


\subsection{Implications of the Positivity of the Second
Variation}\label{sec:IPSV}

In this subsection we show that any admissible mapping $\bv$
with gradient sufficiently close, in $\BMO\cap\,L^1$, to the gradient of
a Lipschitz solution of the Euler-Lagrange equations
whose second variation is
uniformly positive, will have strictly greater energy than the solution.
In addition, it will follow that such a $\bv$ cannot itself satisfy
the Euler-Lagrange equations.


\begin{theorem}\label{thm:SVP=LMBMO} Let $W$ satisfy (1)--(3) of
Hypothesis~\ref{def:W}.  Suppose that $\bu\e\in\AM$ is a
 weak solution of the Dirichlet, Neumann, or mixed problem, i.e.,
\eqref{eqn:ES},
that satisfies, for some $\hat{k}>0$ and all $\bz\in\Var$,
\be\label{eqn:small+SV}
\begin{gathered}
\int_\Omega \DD^2 W\big(\bx,\grad\bu\e(\bx)\big)
\big[\grad\bz(\bx), \grad\bz(\bx)\big]\,\dd\bx
\ge
8\hat{k}\int_\Omega |\grad\bz(\bx)|^2\dd\bx,\\[4pt]
\grad\bu\e(\bx)\in\sB \text{ for $a.e.~\bx\in \Omega$},
\end{gathered}
\ee
\n where $\sB\ne\varnothing$ is a bounded open set
with $\overline{\sB}\subset\cO\subset\MN$.
Then there exists
a $\delta=\delta(\sB)>0$ such that
any $\bv\in\AM$ that satisfies, for $a.e.~\bx\in \Omega$,
\be\label{eqn:small+B}
\grad\bv(\bx)\in\sB,
\qquad
\tsn{\grad\bv-\grad\bu\e}_{\BMO(\Omega)}<\delta,
\qquad
\Big|\dashint_\Omega (\grad\bv-\grad\bu\e)\,\dd\bx\Big|<\delta
\ee
\n will also satisfy
\be\label{eqn:E-taylor-repeat-2}
\E(\bv)\ge \E(\bu\e)
+\hat{k}\int_\Omega|\grad\bv-\grad\bu\e|^2\dd\bx.
\ee
\n  In particular, $\bv\not\equiv\bu\e$ will have strictly greater
energy than $\bu\e$.  Moreover,  $\bv$ cannot be a solution of
the Euler-Lagrange equations, \eqref{eqn:ES}.
\end{theorem}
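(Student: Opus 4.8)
The plan is to run, with only cosmetic modifications, the classical argument that yields \eqref{eqn:unique-minimizer}, but with the interpolation inequality \eqref{eqn:RH} of Theorem~\ref{thm:main-2} (taken with $q=3$ and $p=2$) playing the role that \eqref{eqn:Taylor-Linfinity} plays there. Write $\bw:=\bv-\bu\e$. Since $\bu\e$ is a weak solution of \eqref{eqn:ES}, the identity \eqref{eqn:Identity-ES} gives
\[
\E(\bv)-\E(\bu\e)=\int_\Omega\Big(W\big(\bx,\grad\bv\big)-W\big(\bx,\grad\bu\e\big)-\DD W\big(\bx,\grad\bu\e\big)[\grad\bw]\Big)\,\dd\bx.
\]
A third-order Taylor expansion of $\bF\mapsto W(\bx,\bF)$ about $\grad\bu\e(\bx)$ — in the precise form used in the proof of Lemma~\ref{lem:extra}, so that the remainder is estimated through $\DD^3W$ on a compact subset of $\cO$ determined by $\sB$ (Hypothesis~\ref{def:W}(3)) — shows that the integrand is, for a.e.\ $\bx\in\Omega$, at least $\tfrac12\DD^2W(\bx,\grad\bu\e)[\grad\bw,\grad\bw]-c_0|\grad\bw|^3$, where $c_0=c_0(\sB)>0$. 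Integrating over $\Omega$ and invoking the uniform positivity \eqref{eqn:small+SV}$_1$ of the second variation,
\[
\E(\bv)-\E(\bu\e)\ \ge\ 4\hat{k}\int_\Omega|\grad\bw|^2\,\dd\bx-c_0\int_\Omega|\grad\bw|^3\,\dd\bx.
\]

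To absorb the cubic term I would apply \eqref{eqn:RH} with $q=3$, $p=2$, $U=\Omega$ to each scalar entry $\partial w_i/\partial x_j$, using that \eqref{eqn:small+B}$_{2,3}$ forces $\|\partial w_i/\partial x_j\|_{\BMO(\Omega)\cap L^1(\Omega)}\le\tsn{\grad\bw}_{\BMO(\Omega)}+\big|\dashint_\Omega\grad\bw\,\dd\bx\big|<2\delta$; cubing the resulting inequality, summing over $i,j$, and using $|\bA|^3\le(Nn)^{1/2}\sum_{i,j}|A_{ij}|^3$ for $\bA\in\MN$ yields a constant $C_1=C_1(n,N,\Omega)>0$ such that $\int_\Omega|\grad\bw|^3\,\dd\bx\le C_1\delta\int_\Omega|\grad\bw|^2\,\dd\bx$ — the $\BMO\cap L^1$ analogue of \eqref{eqn:quadratic>cubic-in-Lem}. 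Choosing $\delta$ so small that $c_0C_1\delta\le 3\hat{k}$ then gives \eqref{eqn:E-taylor-repeat-2}. If moreover $\grad\bw\equiv\mathbf{0}$, then $\bw$ is constant on the connected set $\Omega$, and since $\bw\in\Var$ — so either $\bw=\mathbf{0}$ on the nonempty set $\sD$, or $\langle\bw\rangle_\Omega=\mathbf{0}$ — this forces $\bw\equiv\mathbf{0}$, i.e.\ $\bv\equiv\bu\e$; hence $\bv\not\equiv\bu\e$ implies $\int_\Omega|\grad\bw|^2\,\dd\bx>0$ and $\E(\bv)>\E(\bu\e)$.

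It remains to show that $\bv$ cannot itself solve \eqref{eqn:ES} unless $\bv\equiv\bu\e$. Suppose $\bv\in\AM$ satisfies \eqref{eqn:small+B} and \eqref{eqn:ES} with $\bv\not\equiv\bu\e$; I would further shrink $\delta$ so that $\delta\le\varepsilon$, where $\varepsilon$ is the constant furnished by Lemma~\ref{lem:extra} when applied to $\bu\e$, and so that $c_0C_1\delta\le\hat{k}$. Now repeat the first step with the roles of $\bu\e$ and $\bv$ interchanged: \eqref{eqn:Identity-ES} (with $\bv$ the equilibrium solution and $\bu\e$ the competitor), a third-order expansion of $W(\bx,\cdot)$ about $\grad\bv(\bx)$, Lemma~\ref{lem:extra} (which furnishes $\int_\Omega\DD^2W(\bx,\grad\bv)[\grad\bw,\grad\bw]\,\dd\bx\ge 4\hat{k}\int_\Omega|\grad\bw|^2\,\dd\bx$, precisely in the direction $\bw=\bv-\bu\e$ it controls), and the same cubic bound together yield $\E(\bu\e)\ge\E(\bv)+\hat{k}\int_\Omega|\grad\bw|^2\,\dd\bx>\E(\bv)$, contradicting $\E(\bv)>\E(\bu\e)$ from the previous paragraph. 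Hence no such $\bv$ exists. I do not expect a genuine obstacle beyond having Theorem~\ref{thm:main-2} and the Taylor estimates available: the one new ingredient is that smallness of $\grad\bv-\grad\bu\e$ in $\BMO\cap L^1$ (rather than in $L^\infty$) already suffices to dominate the cubic Taylor remainder by the positive-definite quadratic form, which is exactly what \eqref{eqn:RH} provides. The only step requiring real care — the closest thing to an obstacle — is the bookkeeping of constants: selecting a single $\delta$ that makes the energy gap \eqref{eqn:E-taylor-repeat-2} hold and simultaneously keeps Lemma~\ref{lem:extra} applicable with enough margin to preclude a competing equilibrium.
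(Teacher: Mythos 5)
Your proposal is correct and follows essentially the same route as the paper's proof: the pointwise Taylor estimate of Lemma~\ref{lem:taylor-III} combined with the identity \eqref{eqn:Identity-ES} and the componentwise interpolation inequality \eqref{eqn:RH} (with $q=3$, $p=2$) to absorb the cubic remainder, then the role-reversal argument via Lemma~\ref{lem:extra} to exclude a second equilibrium. The only differences are cosmetic — you make explicit the norm-equivalence step $|\bA|^3\le (Nn)^{1/2}\sum_{i,j}|A_{ij}|^3$ that the paper leaves implicit, and your constant bookkeeping ($4\hat k$ versus the paper's $2\hat k$ after the quadratic term) is equally valid.
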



\begin{remark} (1).~For the Dirichlet problem, \eqref{eqn:D-D=0}
shows that the integral in \eqref{eqn:small+B}$_3$ is equal to zero;
consequently, \eqref{eqn:small+B}$_3$ is trivially satisfied for any
$\delta>0$.
(2).~Since we have assumed that $\bu\e\in W^{1,\infty}(\Omega;\R^n)$,
sets  $\sB\subset \MN$ that satisfy
\eqref{eqn:small+SV}$_2$ do exist, e.g.,
\[
\sB:=\sB(||\grad\bu\e||_\infty)
=
\{\bF\in \MN: |\bF|< 1+||\grad\bu\e||_{\infty,\Omega}\}.
\]
\n However, the integrand $W:\overline{\Omega}\times\cO\to\R$
need not be
defined on all of $\overline{\Omega}\times\sB(||\grad\bu\e||_\infty)$.
For example, in Nonlinear Elasticity (see Section~\ref{sec:CR}) one usually
assumes
that\footnote{Here $\det\bF$ denotes
the determinant of $\bF\in\Mn$.}
\[
\cO = \{ \bF\in \Mn:  \det \bF >0\}
\]
\n in which case $\mathbf{0}\not\in\cO$ and hence
$\sB(||\grad\bu\e||_\infty)\not\subset \cO$.
\end{remark}


\begin{remark}\label{rem:KTCC}
Kristensen \& Taheri~\cite[Section~6]{KT03} and
Campos~Cordero~\cite[Section~4]{Ca17} have each obtained a result that
is analogous to Theorem~\ref{thm:SVP=LMBMO} for
Dirichlet boundary data.  In particular, they show that, under weaker
smoothness hypotheses than used here ($\bF\mapsto W(\bF)\in C^2(\MN)$ and
$(\bx,\bF)\mapsto W(\bx,\bF)\in C^2(\overline{\Omega}\times\MN)$,
respectively), one has uniqueness in the regime
where the extension by zero of $\bH(\bx):=\grad\bv(\bx)-\grad\bu\e(\bx)$
is sufficiently small as an element of $\BMO(\R^n)$.
The extension of our result to $C^2$ integrands appears to depend on a
particular generalization of the Fefferman-Stein inequality to bounded
domains: more precisely,  a version of Theorem~\ref{thm:main-1} for
certain Orlicz spaces.  The proofs of
Lemma~6.2 in \cite{KT03} and Lemmas~4.6 and 4.7 in \cite{Ca17} modify
the Fefferman-Stein inequality on all of $\R^n$ by introducing the
modulus of continuity, $\omega$, of $\DD^2 W$ in Taylor's theorem and
then making use of $t\mapsto t^2\omega(t)$ as an $N$-function
(see, e.g., \cite{AF03}).
Such an extension for \emph{cubes} has been obtained by
Verde \& Zecca~\cite[Theorem~2.1]{VZ08}, however, we are not aware of
any corresponding proof for Lipschitz (or John) domains.
\end{remark}


\begin{proof}[Proof of Theorem~\ref{thm:SVP=LMBMO}] For clarity of
exposition, we suppress the variable $\bx$.
Let $\bu\e\in\AM$ be a solution of the Euler-Lagrange equations,
\eqref{eqn:ES}, that satisfies \eqref{eqn:small+SV}
for all $\bz\in\Var$.  Suppose that
$\bv\in\AM$ satisfies \eqref{eqn:small+B}
for some $\delta>0$ to be determined later and define
$\bw:=\bv-\bu\e\in\Var$. Then,
Lemma~\ref{lem:taylor-III} with $\bG=\grad\bv$, $\bF=\grad\bu\e$, and
$\bH=\bG-\bF=\grad\bw$,
yields a constant $c=c(\sB)>0$ such that,
for $a.e.~\bx\in \Omega$,
\be\label{eqn:taylor-app}
W(\grad\bv)\ge W(\grad\bu\e)
+ \DD W(\grad\bu\e)[\grad\bw]
+\tfrac12\DD^2 W(\grad\bu\e)[\grad\bw,\grad\bw]
-c|\grad\bw|^3.
\ee
\n If we now integrate
\eqref{eqn:taylor-app} over $\Omega$ and make use of the uniform
positivity of the second variation, \eqref{eqn:small+SV}$_1$, we find,
with the aid of \eqref{eqn:Identity-ES} (which is a
consequence of the fact that $\bu\e$ satisfies
the Euler-Lagrange equations \eqref{eqn:ES}), that
\be\label{eqn:E-taylor}
\E(\bv)\ge \E(\bu\e)
+2\hat{k}\int_\Omega|\grad\bw|^2\,\dd\bx
-c\int_\Omega|\grad\bw|^3\,\dd\bx.
\ee


We next note that inequality \eqref{eqn:RH} (with $q=3$ and $p=2$) of
Theorem~\ref{thm:main-2} yields a $J>0$ such
that, for the given $\bu\e$ and $\bv$ that satisfy
\eqref{eqn:small+B}$_{2,3}$ and every $i\in \{1,\ldots, N\}$
and $j \in \{1,\ldots,n\}$,
\be\label{eqn:quadratic>cubic-in-Lem-2}
2\delta J^3\int_\Omega
\Big|\frac{\partial w_i}{\partial x_j}\Big|^2\dd\bx
\ge
\int_\Omega
\Big|\frac{\partial w_i}{\partial x_j}\Big|^3\dd\bx,
\qquad \bw:=\bv-\bu\e.
\ee
\n Again one finds that \eqref{eqn:E-taylor-repeat-2} follows from
\eqref{eqn:E-taylor} and \eqref{eqn:quadratic>cubic-in-Lem-2} when
$\delta$ is sufficiently small.

Now, suppose that $\E(\bv)=\E(\bu\e)$.  Then
\eqref{eqn:E-taylor-repeat-2} yields $\grad\bv=\grad\bu\e$ in
$\Omega$ and hence, since $\Omega$ is open and connected,
$\bv=\bu\e+\ba$ for some $\ba\in\R^N$.  However,
$\bw=\bv-\bu\e\in\Var$ and so either
$\bv=\bu\e$ on $\sD$ or $\langle\bw\rangle_\Omega=\mathbf{0}$,
both of which force $\ba=\mathbf{0}$.  Thus,
$\E(\bv)=\E(\bu\e)$ implies $\bv\equiv\bu\e$.

Finally, we note that Lemma~\ref{lem:extra} shows that, if
$\delta\in(0,\varepsilon)$, then the second variation of the energy
is uniformly positive in the direction $\bv-\bu\e$ at $\bv$, that is,
$\bv$ satisfies
\eqref{eqn:small+SV} with $\bu\e$ replaced by $\bv$ and
$\bz=\bv-\bu\e$.  Now, suppose for the sake of contradiction
that $\bv\not\equiv\bu\e$ is also a solution of \eqref{eqn:ES}. Then,
the above argument, with $\bu\e$ replaced by $\bv$ and $\bv$
replaced by $\bu\e$, shows that $\E(\bu\e)>\E(\bv)$, which contradicts
$\E(\bv)>\E(\bu\e)$.  Thus, two distinct solutions of \eqref{eqn:ES},
both of which satisfy  
\eqref{eqn:small+B}, is not possible.
\end{proof}


\part*{Part II: Rotations, Sobolev Mappings, and Nonlinear
Elasticity}\label{part:R-SM-NE}

\section{Rotations, Geometric Rigidity, and Sobolev
Mappings}\label{sec:RGRSM}

In this section we consider the set of $n$-dimensional rotations with
an interest in a comparison of
the distance of a Sobolev mapping from this set to the
distance the mapping has from a single rotation.

\subsection{Additional Preliminaries}\label{sec:prelim-2}
We shall write $\bH\!:\!\bK:=\tr(\bH\bK^\rmT)$ for the inner product of
$\bH,\bK \in\Mn$, where $\tr$
denotes the trace and $\bK^\rmT$ denotes the transpose of $\bK$.
The norm of $\bH \in\Mn$, which is defined by \eqref{eqn:M-norm},
is then equal to
$\sqrt{\bH\!:\!\bH\,}$.
We shall denote the set of $n$-dimensional \emph{rotations} by $\SOn$;
thus, every $\bR\in\SOn$ satisfies $\bR^\rmT\bR=\bR\bR^\rmT=\bI$
and $\det\bR=1$, where $\bI\in\Mn$ denotes the identity matrix.
If $\bV\in\Mn$  is invertible, we use the notation
$\bV^{\mi1}$ to denote its inverse, viz.,
$\bV\bV^{\mi1}=\bV^{\mi1}\bV=\bI$.


We use the notation $\wedge$ to denote the exterior
(``wedge'') product
(see, e.g., \cite[Chapter~1]{Fe69}, \cite[Chapter~9]{IM01},
or \cite[Chapter~4]{Sp65}). For $n\ge3$ we shall identify the
space $\Lambda_{n-1}\R^{n-1}$, of alternating $n-1$ tensors
on $\R^n$, with $\R^n$ itself by means
of the mapping
\[
(\ba_1,\ba_2,\ldots,\ba_{n-1})
\mapsto
\ba_1\wedge\ba_2\wedge\ldots\wedge\ba_{n-1}.
\]
\n We note that this mapping is multilinear, alternating, and
satisfies
\be\label{eqn:wedge-1}
\bee_1\wedge\bee_2\wedge\ldots\wedge\bee_{n-1} = \bee_n
\ee
\n when $\bee_1,\bee_2,\ldots,\bee_n$ is
\emph{any orthonormal basis with the standard orientation for}
$\R^n$.  We shall also make use of the identities,
for all rotations $\bQ\in\SOn$,
\be\label{eqn:wedge-2}
\begin{gathered}
\bQ\bee_1\wedge\bQ\bee_2\wedge\ldots\wedge\bQ\bee_{n-1}
= \bQ(\bee_1\wedge\bee_2\wedge\ldots\wedge\bee_{n-1})=\bQ\bee_n,\\
|\ba_1\wedge\ba_2\wedge\ldots\wedge\ba_{n-1}|
\le X \prod_{k=1}^{n-1}|\ba_k|,
\end{gathered}
\ee
\n for all $\ba_k\in\R^n$, where $X=X(n)>0$ is a
constant that depends only on the dimension $n$.


\begin{remark} (1).~When $n=3$ the usual cross product can be
substituted for the wedge product;  also $X(3)=1$.
(2).~Equation \eqref{eqn:wedge-2}$_1$ follows
from \eqref{eqn:wedge-1}; the exterior product of the
first $n-1$ vectors in any standardly oriented orthonormal
basis yields the unique unit vector, with the proper orientation,
that is perpendicular to each of the other vectors.  For
\eqref{eqn:wedge-2}$_2$ see, e.g., \cite[p.~220]{IM01}.
\end{remark}


\subsection{The Geometric-Rigidity Theory of Friesecke,
James, \& M\"uller}\label{sec:GRTFJM}  In Theorem~3.1 in
\cite{FJM02} the authors have shown that, given a Sobolev mapping $\bu$,
there exists a rotation $\bR_\bu$ such that the distance
from $\grad\bu$ to $\bR_\bu$ is, up to a
multiplicative constant which does not depend on $\bu$, a lower
bound for the distance from $\grad\bu$ to the
set of  $n$-dimensional rotations.
Their measure of distance from the set of rotations
is the $L^2$-norm of the functional
\[
\dist\!\big(\grad\bv(\bx),\SOn\big)
:=
\min_{\bQ\in\SOn} |\grad\bv(\bx)-\bQ|.
\]
\n However, as noted by Conti \& Schweizer~\cite[p.~854]{CS06}, $L^2$
can be replaced by $L^p$ for any $p\in(1,\infty)$.


Before we state the Geometric-Rigidity result of interest
in this manuscript,
we first note that, when the Jacobian of a mapping is strictly positive,
the distance to the set
of rotations can be expressed in an alternative form.
We give a
proof for the convenience of the reader.


\begin{lemma}\label{lem:alt-dist}  Let $\bF\in\Mn$ with polar
 decomposition $\bF=\bR\bU$ satisfy $\det\bF>0$.
Then
\[
\dist\!\big(\bF,\SOn\big)=\big|\sqrt{\bF^\rmT\bF\,}-\bI\big|=|\bU-\bI|.
\]
\end{lemma}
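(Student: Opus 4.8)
The plan is to show the two claimed equalities in turn, both of which are elementary consequences of the polar decomposition. Write $\bF = \bR\bU$ with $\bR\in\SOn$ and $\bU = \sqrt{\bF^\rmT\bF\,}$ symmetric positive definite (this decomposition exists and is unique precisely because $\det\bF>0$). The first step is the rightmost equality $\big|\sqrt{\bF^\rmT\bF\,}-\bI\big| = |\bU-\bI|$, which is immediate from the definition $\bU := \sqrt{\bF^\rmT\bF\,}$; there is nothing to prove beyond unwinding notation.

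Next I would establish $\dist(\bF,\SOn) \le |\bU-\bI|$. Since $\bR\in\SOn$ is a candidate in the minimization, $\dist(\bF,\SOn) \le |\bF - \bR| = |\bR\bU - \bR\bI| = |\bR(\bU-\bI)|$, and because the Frobenius norm \eqref{eqn:M-norm} is invariant under left multiplication by an orthogonal matrix (as $|\bR\bA|^2 = \tr(\bR\bA\bA^\rmT\bR^\rmT) = \tr(\bA\bA^\rmT) = |\bA|^2$), this equals $|\bU-\bI|$.

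For the reverse inequality $\dist(\bF,\SOn) \ge |\bU-\bI|$, fix an arbitrary $\bQ\in\SOn$ and estimate $|\bF-\bQ|$ from below by $|\bU-\bI|$. Using orthogonal invariance again, $|\bF-\bQ| = |\bR\bU - \bQ| = |\bU - \bR^\rmT\bQ|$, and $\bR^\rmT\bQ =: \bS \in \SOn$, so it suffices to show $|\bU - \bS| \ge |\bU - \bI|$ for every $\bS\in\SOn$. Expanding the squared Frobenius norm via $|\bA-\bB|^2 = |\bA|^2 - 2\,\bA\!:\!\bB + |\bB|^2$ and noting $|\bS|^2 = |\bI|^2 = n$, this reduces to $\bU\!:\!\bS \le \bU\!:\!\bI = \tr\bU$. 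Since $\bU$ is symmetric positive definite, diagonalize $\bU = \bP^\rmT\DD\bP$ with $\DD = \diag(\lambda_1,\dots,\lambda_n)$, $\lambda_i>0$; then $\bU\!:\!\bS = \DD\!:\!(\bP\bS\bP^\rmT) = \sum_i \lambda_i T_{ii}$ where $\bT := \bP\bS\bP^\rmT \in \SOn$ has $|T_{ii}|\le 1$ (each column of an orthogonal matrix is a unit vector), hence $\sum_i \lambda_i T_{ii} \le \sum_i \lambda_i = \tr\bU$. Combining the two inequalities gives $\dist(\bF,\SOn) = |\bU-\bI|$, completing the proof.

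The only mildly delicate point — the ``main obstacle,'' such as it is — is the lower bound $\bU\!:\!\bS \le \tr\bU$ for $\bS\in\SOn$; everything else is bookkeeping with orthogonal invariance of the Frobenius norm. One could alternatively phrase this lower bound as the statement that $\bI$ is the closest rotation to any symmetric positive definite matrix, which is a standard fact, but I would include the short eigenvalue argument above for completeness since the paper is proving the lemma ``for the convenience of the reader.'' Note also that the minimum in $\dist(\bF,\SOn)$ is attained because $\SOn$ is compact, so ``$\min$'' is legitimate and the chain of inequalities above is over a nonempty set.
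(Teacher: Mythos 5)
Your proposal is correct and follows essentially the same route as the paper: both reduce, via the polar decomposition and the expansion of $|\bF-\bQ|^2$, to showing $\bU\!:\!\bR^\rmT\bQ\le\tr\bU$, and both prove this by the spectral theorem together with the observation that the diagonal entries of a rotation in the eigenbasis of $\bU$ are at most $1$. Your write-up is if anything slightly more explicit than the paper's at the final optimization step, but the argument is the same.
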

\begin{proof}  Recall that (see, e.g., \cite[Chapter~I]{Gu81} or
\cite[Section~3.2]{Ci1988}) $\bF\in\Mn$ with
$\det\bF>0$ has a unique polar decomposition $\bF=\bR\bU$, where
$\bU:=\sqrt{\bF^\rmT\bF}$ is symmetric and strictly positive definite
and $\bR:=\bF\bU^{\mi1}\in \SOn$.   Then, for any $\bQ\in\SOn$,
\be\label{eqn:square}
|\bF-\bQ|^2 = |\bF|^2-2\bF:\bQ+n=|\bU|^2-2\bU:\bR^\rmT\bQ+n.
\ee
\n Next, by the spectral theorem,
\be\label{eqn:spectral}
\bU:\bR^\rmT\bQ
= \sum_{k=1}^n \lambda_k\big[\bff_k\otimes\bff_k\big] :\bR^\rmT\bQ
= \sum_{k=1}^n \lambda_k\bff_k\cdot\bR^\rmT\bQ\bff_k,
\ee
\n where $\lambda_k>0$ and $\{\bff_k:k=1,2,\ldots,n\}$ is
an orthonormal basis for $\R^n$.
Consequently, in view of \eqref{eqn:square} and \eqref{eqn:spectral}
the minimum of $|\bF-\bQ|$ will occur when each of the quantities
$\bff_k\cdot\bR^\rmT\bQ\bff_k$ is maximized, that is, when
$\bR^\rmT\bQ=\bI$.  Therefore,
\[
\dist\!\big(\bF,\SOn\big)
:=\min_{\bQ\in\SOn} |\bF-\bQ| = |\bR\bU-\bR|= |\bU-\bI|,
\]
\n as claimed.
\end{proof}

We now state the result that we shall utilize.


\begin{proposition}\label{prop:GR}
\emph{({\cite[Section~3]{FJM02}) and \cite[Section~2.4]{CS06}})}
Let $1<p<\infty$.  Suppose that $\Om\subset\R^n$, $n\ge2$, is a
bounded Lipschitz domain.  Then there exists a
constant $C=C(p,\Om)$ with the following property:
For each $\bv\in W^{1,p}(\Om;\R^n)$
there is an associated rotation $\bR=\bR(p,\bv,\Om)\in \SOn$ such that
\be\label{eqn:GR}
\dashint_\Om |\grad\bv(\bx)-\bR|^p\,\dd\bx
\le
 C^p \dashint_\Om \Big[\dist\!\big(\grad\bv(\bx),\SOn\big)\Big]^p\dd\bx.
\ee
\n Moreover, \eqref{eqn:GR} is scale invariant, i.e.,
$C(p,\lambda \Om+\ba)=C(p,\Om)$ for all $\lambda>0$ and $\ba\in\R^n$.
In addition, there exists a constant $M=M(n)>0$ such that, for all
$\bv\in W^{1,\infty}(\Om;\R^n)$,
\be\label{eqn:small-dist-&-small_BMO}
\tsn{\grad\bv}_{\BMO(\Om)}
\le M \big\|\dist\!\big(\grad\bv,\SOn\big)\big\|_{\infty,\Om}.
\ee
\end{proposition}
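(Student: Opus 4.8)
The plan is to prove Proposition~\ref{prop:GR} in two parts: the scale-invariant rigidity estimate \eqref{eqn:GR}, which is essentially quoted from \cite{FJM02,CS06}, and the $\BMO$ bound \eqref{eqn:small-dist-&-small_BMO}, which is the genuinely new (though elementary) assertion. For \eqref{eqn:GR} I would simply invoke Theorem~3.1 of Friesecke, James, \& M\"uller together with the observation of Conti \& Schweizer that their argument works with $L^2$ replaced by $L^p$ for any $p\in(1,\infty)$; the rotation $\bR$ there is the rotation produced by that theorem. Scale invariance follows because both sides of \eqref{eqn:GR} are written with averaged integrals $\dashint$: under the change of variables $\bx\mapsto\lambda\bx+\ba$ the map $\bv$ is replaced by a map with the same gradient distribution, and averaged $L^p$ norms of gradients are unchanged, so the optimal constant for $\lambda\Om+\ba$ equals that for $\Om$.

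For \eqref{eqn:small-dist-&-small_BMO} the idea is that the $\BMO$ seminorm of any field is controlled, cube by cube, by oscillation around \emph{any} fixed matrix, in particular around a rotation chosen locally. Fix $\bv\in W^{1,\infty}(\Om;\R^n)$ and a cube $Q\subset\Om$. For a.e.\ $\bx\in Q$ pick $\bQ(\bx)\in\SOn$ realizing $\dist(\grad\bv(\bx),\SOn)=|\grad\bv(\bx)-\bQ(\bx)|$, and pick any fixed $\bQ_0\in\SOn$ (say a measurable selection, or even just $\bQ_0=\bQ(\bx_0)$ for one point). Then for any constant matrix $\bA$,
\[
\dashint_Q |\grad\bv-\langle\grad\bv\rangle_Q|\,\dd\bx
\le 2\dashint_Q |\grad\bv-\bA|\,\dd\bx,
\]
by the triangle inequality and the fact that $\langle\grad\bv\rangle_Q$ is the constant minimizing $\dashint_Q|\grad\bv-\bA|^2\,\dd\bx$ (or directly: $|\langle\grad\bv\rangle_Q-\bA|\le\dashint_Q|\grad\bv-\bA|$). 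Taking $\bA=\bQ_0$ and then splitting $|\grad\bv-\bQ_0|\le|\grad\bv-\bQ(\bx)|+|\bQ(\bx)-\bQ_0|$ gives
\[
\dashint_Q |\grad\bv-\langle\grad\bv\rangle_Q|\,\dd\bx
\le 2\|\dist(\grad\bv,\SOn)\|_{\infty,\Om}
+ 2\dashint_Q|\bQ(\bx)-\bQ_0|\,\dd\bx.
\]
The main point — and the step I expect to require the wedge-product machinery recalled in Section~\ref{sec:prelim-2} — is to bound the last term, the oscillation of the rotation field $\bQ(\cdot)$, again by $\|\dist(\grad\bv,\SOn)\|_{\infty,\Om}$ with a dimensional constant. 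The heuristic is that $\grad\bv$ is within $\varepsilon:=\|\dist(\grad\bv,\SOn)\|_\infty$ of $\SOn$ pointwise, so its columns (or the wedge products of its columns) are within $O(\varepsilon)$ of an orthonormal frame, and two rotations whose columns are both $O(\varepsilon)$-close to the columns of $\grad\bv$ at nearby points must themselves be $O(\varepsilon)$-close; the identities \eqref{eqn:wedge-1}, \eqref{eqn:wedge-2} are exactly what lets one recover $\bQ\bee_n$ (hence all of $\bQ$) from the other columns and control the perturbation. Quantitatively, if $|\grad\bv(\bx)-\bQ(\bx)|\le\varepsilon$ and $|\grad\bv(\by)-\bQ(\by)|\le\varepsilon$ for $\bx,\by\in Q$, then $|\bQ(\bx)-\bQ(\by)|\le 2\varepsilon + |\grad\bv(\bx)-\grad\bv(\by)|$; but since $\bQ(\bx),\bQ(\by)\in\SOn$ and both are $\varepsilon$-close to matrices (namely $\grad\bv(\bx),\grad\bv(\by)$) that are themselves $\varepsilon$-close to $\SOn$, one shows $|\bQ(\bx)-\bQ(\by)|\le X'\varepsilon$ for a purely dimensional $X'=X'(n)$, using that on $\SOn$ the extrinsic distance is comparable to the constraint defect and that the wedge-product formula \eqref{eqn:wedge-2}$_1$ forces the last column once the first $n-1$ are within $O(\varepsilon)$. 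Hence $\dashint_Q|\bQ(\bx)-\bQ_0|\,\dd\bx\le X'\varepsilon$, and combining,
\[
\dashint_Q |\grad\bv-\langle\grad\bv\rangle_Q|\,\dd\bx
\le (2+2X')\,\|\dist(\grad\bv,\SOn)\|_{\infty,\Om}.
\]
Taking the supremum over cubes $Q\subset\Om$ yields \eqref{eqn:small-dist-&-small_BMO} with $M=M(n):=2+2X'$, which depends only on the dimension. The hard part is making the dimensional estimate $|\bQ(\bx)-\bQ(\by)|\lesssim\varepsilon$ rigorous and uniform — this is where care with the geometry of $\SOn$ near a given matrix (and the wedge-product identities) is needed — but it is a local, pointwise statement with no dependence on $\Om$, which is why $M$ ends up dimensional.
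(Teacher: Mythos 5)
Your treatment of \eqref{eqn:GR} and of scale invariance is essentially fine (it is a citation plus a change-of-variables observation), but your proposed proof of the $\BMO$ bound \eqref{eqn:small-dist-&-small_BMO} has a fatal gap, and it is a gap of principle, not just a missing computation.

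You try to control $\dashint_Q|\bQ(\bx)-\bQ_0|\,\dd\bx$, the oscillation of the pointwise-nearest-rotation field, by $\varepsilon=\|\dist(\grad\bv,\SOn)\|_{\infty,\Om}$ alone, asserting ``\,$|\bQ(\bx)-\bQ(\by)|\le X'(n)\varepsilon$ because both rotations are $\varepsilon$-close to $\grad\bv$ at nearby points.'' But $\bQ(\bx)$ and $\bQ(\by)$ are $\varepsilon$-close to $\grad\bv(\bx)$ and $\grad\bv(\by)$ \emph{respectively}, and those two matrices have no a priori relationship; indeed $|\grad\bv(\bx)-\grad\bv(\by)|$ is exactly the quantity whose smallness on average you are trying to establish, so the estimate $|\bQ(\bx)-\bQ(\by)|\le 2\varepsilon+|\grad\bv(\bx)-\grad\bv(\by)|$ that you write down is circular. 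Concretely, a purely pointwise/local argument cannot work: for a generic matrix field $\bF(\bx)=\bR(\theta(\bx))$ with $\theta$ oscillating wildly one has $\dist(\bF,\SOn)\equiv 0$ while $\tsn{\bF}_{\BMO}$ is large. What rescues \eqref{eqn:small-dist-&-small_BMO} is that $\grad\bv$ is a \emph{gradient}, hence curl-free, and the quantitative consequence of that global constraint is precisely geometric rigidity \eqref{eqn:GR}. No amount of careful geometry of $\SOn$ or wedge-product identities (which are used in the paper for Lemma~\ref{prop:BC}, a different purpose) will substitute for that.

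The actual argument, which the paper records in Remark~\ref{rem:GR+1}(3), is short and does go through the step you wrote first. Fix any $p\in(1,\infty)$, say $p=2$. For a cube $Q\subset\Om$, apply \eqref{eqn:GR} on $Q$ itself (cubes are Lipschitz/John domains) to get a rotation $\bR_Q$ with $\dashint_Q|\grad\bv-\bR_Q|^p\,\dd\bx\le C(p,Q)^p\dashint_Q[\dist(\grad\bv,\SOn)]^p\,\dd\bx\le C(p,Q)^p\varepsilon^p$. By the scale invariance of $C$, $C(p,Q)=C(p,Q_0)$ for the unit cube $Q_0$, a purely dimensional constant once $p$ is fixed. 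Then H\"older and the elementary inequality you already noted, $\dashint_Q|\grad\bv-\langle\grad\bv\rangle_Q|\,\dd\bx\le 2\dashint_Q|\grad\bv-\bR_Q|\,\dd\bx$, give $\dashint_Q|\grad\bv-\langle\grad\bv\rangle_Q|\,\dd\bx\le 2C(p,Q_0)\varepsilon$. Taking the supremum over cubes $Q\subset\Om$ yields \eqref{eqn:small-dist-&-small_BMO} with $M=2C(p,Q_0)=M(n)$. In short: you must \emph{use} \eqref{eqn:GR} on every cube, rather than try to re-derive a rigidity-type bound from scratch by pointwise reasoning.
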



\begin{remark}\label{rem:GR+1} (1).~When $p=1$ or $p=\infty$
the estimate
corresponding to \eqref{eqn:GR} is \emph{not} valid.  See
John~\cite[pp.~393--394]{Jo61} for a counterexample when $p=\infty$.
(2).~When $p=1$ Conti \& Schweizer~\cite[p.~853]{CS06} obtained a
so-called \emph{weak-type} estimate as well as an estimate
where the integral on the right-hand side of \eqref{eqn:GR}, which
we here denote by $\rho$, is replaced by $\rho\max\{-\ln \rho, 1\}$.
(3).~The result in \cite{FJM02} corresponding to
\eqref{eqn:small-dist-&-small_BMO} differs slightly.  However,
the above version is a direct consequence of \eqref{eqn:GR}, H\"olders
inequality, the scale invariance of $C$, and the definition
of $\BMO(\Om)$.
(4).~Inequalities \eqref{eqn:GR} and \eqref{eqn:small-dist-&-small_BMO}
were first obtained by
John~\cite{Jo61,Jo72-2} when $\Om$ is a cube, $\bv$ is $C^1$, and the
norm on the right-hand side of \eqref{eqn:small-dist-&-small_BMO} is
sufficiently small.
(5).~Conti, Dolzmann, \& M\"uller~\cite[Section~4]{CDM14} have obtained
a version of
\eqref{eqn:GR} for the Lorentz spaces $L^{p,q}(\Om)$, $p\in(1,\infty)$,
$q\in[1,\infty]$.
(6).~Ciarlet \& Mardare~\cite{CM15} have
obtained a version of
\eqref{eqn:GR} (but not \eqref{eqn:small-dist-&-small_BMO}) that
involves two mappings.  See Remark~\ref{rem:CM-15} in this
manuscript for a brief description of one of their results.
(7).~See, also, ~Re\v setnjak~\cite{Re67},
Benyamini \& Lindenstrauss~\cite[Chapter~14]{BL00}, and
Fefferman, Damelin, \& Glover~\cite{DFG12}.
\end{remark}

\begin{remark}\label{rem:GR+2}  The distance of the mapping $\bv$
to the closest rigid
mapping, $\br(\bx)=\bR\bx+\ba$, is also of interest. Such estimates
follow from \eqref{eqn:GR} upon application of a standard embedding
theorem or the Poincar\'{e} inequality.
John~\cite{Jo61,Jo72-2} obtained such a result for cubes when the
$L^\infty$-norm on the right-hand side of
\eqref{eqn:small-dist-&-small_BMO} is
sufficiently small.   Kohn~\cite{Ko82} proved a similar result for
Lipschitz domains when the mappings were bi-Lipschitz, but without
the need for an $L^\infty$ bound.  He also obtained a bound similar
to \eqref{eqn:GR} for bi-Lipschitz mappings.
\end{remark}

\begin{remark}\label{rem:GR+3} If $\bG:=\langle\grad\bv\rangle_\Om$
satisfies $\det\bG>0$,  a short
computation (see the proof of Lemma~\ref{lem:alt-dist}) shows that
\[
\min_{\bQ\in\SOn}\int_\Om |\grad\bv(\bx)-\bQ|^2\,\dd\bx
\]
\n is achieved when $\bQ:=\bG\bV^{\mi1}$,
where $\bV=\sqrt{\bG^\rmT\bG}$,
i.e., $\bG$ has polar decomposition $\bG=\bQ\bV$. This was
first noticed by John~\cite{Jo61,Jo72-2}.
\end{remark}


\subsection{Sobolev Mappings and Rotations}\label{sec:RSM}

In this subsection we show that the imposition of a Dirichlet boundary
condition on a nonempty, relatively open subset of the boundary yields a
relationship between Sobolev mappings and rotations.
Recall that $\Omega\subset\R^n$ is a fixed Lipschitz domain and suppose
that $\sD\subset \partial\Omega$ is a nonempty, relatively open set.

\begin{lemma}\label{prop:BC}  Fix $p\in(n,\infty)$. Then there
exists a constant $A=A(p,\Omega,\sD)>0$ such that every pair of
mappings $\bu^{(i)}\in W^{1,p}(\Omega;\R^n)$, $i=1,2$, that satisfies
$\bu^{(1)}(\bx)=\bu^{(2)}(\bx)$ for $\bx\in\sD$,
will also satisfy
\be\label{eqn:rotations-close}
\big|\bR^{(1)}-\bR^{(2)}\big|
<
A\Big(\big\|\grad\bu^{(1)}-\bR^{(1)}\big\|_{p,\Omega}
+
\big\|\grad\bu^{(2)}-\bR^{(2)}\big\|_{p,\Omega}\Big)
\ee
\n for every pair of rotations $\bR^{(i)}\in\SOn$, $i=1,2$.
\end{lemma}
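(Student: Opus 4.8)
The plan is to reduce everything to a bound on $\bB:=\bR^{(1)}-\bR^{(2)}$ measured against the directions that $\sD$ "sees", and then to invoke the rigidity of $\SOn$ to recover the remaining (normal) direction. Set $\bw:=\bu^{(1)}-\bu^{(2)}$, so that $\bw=\mathbf 0$ on $\sD$, and put $\eta:=\|\grad\bu^{(1)}-\bR^{(1)}\|_{p,\Omega}+\|\grad\bu^{(2)}-\bR^{(2)}\|_{p,\Omega}$. First I would note that $\grad\bw-\bB=(\grad\bu^{(1)}-\bR^{(1)})-(\grad\bu^{(2)}-\bR^{(2)})$, so by the triangle inequality $\|\grad(\bw-\bB\bx)\|_{p,\Omega}=\|\grad\bw-\bB\|_{p,\Omega}\le\eta$. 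Since $p>n$ and $\Omega$ is a bounded Lipschitz domain, Morrey's embedding together with the Poincaré--Wirtinger inequality gives a constant $C_0=C_0(p,\Omega)$ and a vector $\mathbf c:=\langle\bw-\bB\bx\rangle_\Omega\in\R^n$ such that the continuous representative of $\bw-\bB\bx$ satisfies $|\bw(\bx)-\bB\bx-\mathbf c|\le C_0\eta$ for all $\bx\in\overline\Omega$. Restricting to $\sD\subset\overline\Omega$, where $\bw$ vanishes, yields $|\bB\bx+\mathbf c|\le C_0\eta$ for every $\bx\in\sD$.

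Next I would convert this into a bound on $\bB$ along the linear subspace $T$ spanned by $\{\bx-\by:\bx,\by\in\sD\}$. Because $\sD$ is relatively open in $\partial\Omega$ it contains a genuine $(n-1)$-dimensional surface patch, so $\dim T\in\{n-1,n\}$. The functional $(\bA,\mathbf b)\mapsto\sup_{\bx\in\sD}|\bA\bx+\mathbf b|$ is a finite seminorm on $\Mn\times\R^n$, and its kernel $K$ consists exactly of those $(\bA,\mathbf b)$ for which $\bA\bx+\mathbf b=\mathbf 0$ on $\sD$; in particular every such $\bA$ annihilates $T$. By equivalence of norms on the finite-dimensional quotient $(\Mn\times\R^n)/K$ there is $c_0=c_0(\sD)>0$ with $\sup_{\bx\in\sD}|\bA\bx+\mathbf b|\ge c_0\,\dist\!\big((\bA,\mathbf b),K\big)$. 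Applied to $(\bB,\mathbf c)$, and choosing $(\bB_0,\mathbf c_0)\in K$ nearest to it, this gives $|\bB-\bB_0|\le (C_0/c_0)\eta=:C_1\eta$ while $\bB_0$ vanishes on $T$; hence $|\bB\bee|\le C_1\eta$ for every unit vector $\bee\in T$.

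If $T=\R^n$ I would pick an orthonormal basis $\bee_1,\dots,\bee_n\subset T$ and conclude $|\bB|^2=\sum_k|\bB\bee_k|^2\le nC_1^2\eta^2$. If $\dim T=n-1$ I would complete an orthonormal basis $\bee_1,\dots,\bee_{n-1}$ of $T$ to a positively oriented orthonormal basis $\bee_1,\dots,\bee_n$ of $\R^n$; then by \eqref{eqn:wedge-1} one has $\bee_1\wedge\cdots\wedge\bee_{n-1}=\sigma\bee_n$ with $\sigma=\pm1$ fixed by the ordering, and \eqref{eqn:wedge-2}$_1$ gives $\bR^{(m)}\bee_1\wedge\cdots\wedge\bR^{(m)}\bee_{n-1}=\sigma\bR^{(m)}\bee_n$ for $m=1,2$. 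Subtracting, expanding the difference of the two $(n-1)$-fold wedge products telescopically, and estimating each term via \eqref{eqn:wedge-2}$_2$ together with $|\bR^{(m)}\bee_k|=1$, I get $|\bB\bee_n|=|\bR^{(1)}\bee_n-\bR^{(2)}\bee_n|\le X(n-1)\max_{k\le n-1}|\bB\bee_k|\le X(n-1)C_1\eta$ (for $n=2$ this step is replaced by $\bR\bee_1^{\perp}=(\bR\bee_1)^{\perp}$ for $\bR\in\SO(2)$, with $\bee_2=\bee_1^{\perp}$). Thus $|\bB\bee_k|\le\max\{1,X(n-1)\}C_1\eta$ for all $k$, so $|\bB|^2\le n\max\{1,X(n-1)\}^2C_1^2\eta^2$, and taking $A:=2\sqrt n\,\max\{1,X(n-1)\}\,C_1$ yields \eqref{eqn:rotations-close} whenever $\eta>0$ (when $\eta=0$ both gradients coincide with constant rotations agreeing on $\sD$, so the two maps are identical and the statement is vacuous).

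The hard part will be the case $\dim T=n-1$, that is, when $\sD$ lies in a hyperplane (for instance a single face of a cube): the argument controls $\bB$ only in the tangential directions, and to close it one must exploit the rigidity of $\SOn$ — encoded in the exterior-product identities \eqref{eqn:wedge-1}--\eqref{eqn:wedge-2} — to recover the missing normal component. A minor technical point is the equivalence-of-norms step, which is elementary but requires identifying the kernel $K$ with the affine maps vanishing on $\sD$ and using that $\sD$, being relatively open in $\partial\Omega$, spans $T$.
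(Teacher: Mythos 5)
Your proof is correct, and it shares the paper's two bookends --- the Poincar\'e--Morrey control of the affine deviation on $\sD$ at the start, and the exterior-product rigidity of $\SOn$ to recover the normal direction at the end --- but the middle step is genuinely different. The paper never subtracts the two maps: it derives a H\"older estimate $|(\bI-\bR)[\by-\bx]|\le MPd\,|\by-\bx|^\lambda$ for $\bx,\by\in\sD$, with $\bR=[\bR^{(1)}]^\rmT\bR^{(2)}$, then fixes a single point of $\sD$ admitting a tangent hyperplane, writes $\sD$ locally as a Lipschitz graph $(\bz,\gamma(\bz))$, and extracts the tangential bounds $|(\bI-\bR)\bee_k|\le Gd\,r^{\lambda-1}$ by explicit coordinate manipulations; this yields a constant that is explicit in the graph radius $r$ and the Lipschitz constant $L$, but it requires the local chart and the comparison between tangent vectors $\bt=(\bz,0)$ and graph points $\by_\gamma=(\bz,\gamma(\bz))$. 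You instead pass to $\bw=\bu^{(1)}-\bu^{(2)}$, obtain $\sup_{\bx\in\sD}|\bB\bx+\mathbf c|\le C_0\eta$, and convert this into control of $\bB$ on the span $T$ of differences of points of $\sD$ by an equivalence-of-norms argument on the finite-dimensional space of affine maps modulo those vanishing on $\sD$. This is shorter, avoids the boundary chart entirely, and dispenses with the wedge-product step whenever $\sD$ is not contained in a hyperplane (since then $T=\R^n$), at the price of a non-constructive constant $c_0(\sD)$. The only blemish is cosmetic and shared with the paper's own argument: the strict inequality in \eqref{eqn:rotations-close} degenerates when both $L^p$-distances vanish, which your doubling of the constant handles for $\eta>0$.
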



Before we prove Lemma~\ref{prop:BC}, we first present an interesting
consequence.

\begin{proposition}\label{prop:BC+SR=close-in-L1} Fix $p\in(n,\infty)$.
Then there exists a constant $A^*=A^*(p,\Omega,\sD)>0$
such that every pair of mappings $\bu^{(i)}\in W^{1,p}(\Omega;\R^n)$,
$i=1,2$, that satisfies $\bu^{(1)}(\bx)=\bu^{(2)}(\bx)$ for $\bx\in\sD$,
will also satisfy
\be\label{eqn:BC+SR=close-in-L1}
\big\|\grad\bu^{(1)}-\grad\bu^{(2)}\big\|_{1,\Om}
\le
A^*
\Big(\big\|\dist\!\big(\grad\bu^{(1)},\SOn\big)\big\|_{p,\Om}
+
\big\|\dist\!\big(\grad\bu^{(2)},\SOn\big)\big\|_{p,\Om}\Big).
\ee
\end{proposition}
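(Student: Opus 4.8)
The plan is to combine Lemma~\ref{prop:BC} with the Geometric-Rigidity estimate of Proposition~\ref{prop:GR} and a triangle inequality. First, apply Proposition~\ref{prop:GR} to each $\bu^{(i)}$ to obtain rotations $\bR^{(i)}:=\bR(p,\bu^{(i)},\Om)\in\SOn$ and a constant $C=C(p,\Om)$ such that
\be\label{eqn:plan-GR}
\big\|\grad\bu^{(i)}-\bR^{(i)}\big\|_{p,\Om}
\le
C\,\big\|\dist\!\big(\grad\bu^{(i)},\SOn\big)\big\|_{p,\Om},
\qquad i=1,2.
\ee
(Here I use the unnormalized $L^p$-norm; since $\Om$ is fixed the passage between $\dashint$ and $\int$ only changes the constant.) Next, write $\grad\bu^{(1)}-\grad\bu^{(2)} = (\grad\bu^{(1)}-\bR^{(1)}) - (\grad\bu^{(2)}-\bR^{(2)}) + (\bR^{(1)}-\bR^{(2)})$, so that, since $|\Om|<\infty$ gives $\|\cdot\|_{1,\Om}\le|\Om|^{1-1/p}\|\cdot\|_{p,\Om}$,
\[
\big\|\grad\bu^{(1)}-\grad\bu^{(2)}\big\|_{1,\Om}
\le
|\Om|^{1-1/p}\Big(\big\|\grad\bu^{(1)}-\bR^{(1)}\big\|_{p,\Om}
+\big\|\grad\bu^{(2)}-\bR^{(2)}\big\|_{p,\Om}\Big)
+
|\Om|\,\big|\bR^{(1)}-\bR^{(2)}\big|.
\]

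Now invoke Lemma~\ref{prop:BC} with this particular choice of rotations: since $\bu^{(1)}=\bu^{(2)}$ on $\sD$, inequality \eqref{eqn:rotations-close} bounds $|\bR^{(1)}-\bR^{(2)}|$ by $A\big(\|\grad\bu^{(1)}-\bR^{(1)}\|_{p,\Om}+\|\grad\bu^{(2)}-\bR^{(2)}\|_{p,\Om}\big)$. Substituting this into the previous display collapses everything to a constant multiple of $\|\grad\bu^{(1)}-\bR^{(1)}\|_{p,\Om}+\|\grad\bu^{(2)}-\bR^{(2)}\|_{p,\Om}$, and then \eqref{eqn:plan-GR} converts each of these into the distance-to-$\SOn$ terms appearing on the right-hand side of \eqref{eqn:BC+SR=close-in-L1}. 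Collecting constants, one sets $A^*:=\big(|\Om|^{1-1/p}+A\,|\Om|\big)C$ (or any convenient upper bound thereof), which depends only on $p$, $\Om$, and $\sD$ through $A$.

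The only genuine content is that the rotations delivered by Geometric Rigidity in \eqref{eqn:plan-GR} are the \emph{same} rotations that may legitimately be inserted into Lemma~\ref{prop:BC}; but Lemma~\ref{prop:BC} is stated for \emph{every} pair of rotations, so there is no compatibility issue — the main obstacle has effectively been absorbed into the proof of Lemma~\ref{prop:BC} (which presumably uses a trace/Poincaré argument on $\sD$ to control $\bR^{(1)}-\bR^{(2)}$ by the oscillation of the gradients away from their rotations). Everything else here is bookkeeping with the triangle inequality and Hölder's inequality on the bounded domain $\Om$.
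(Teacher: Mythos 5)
Your proposal is correct and follows essentially the same route as the paper's proof: Geometric Rigidity (Proposition~\ref{prop:GR}) supplies the rotations $\bR^{(i)}$, the add-and-subtract decomposition plus the triangle and H\"older inequalities reduce the problem to estimating $|\bR^{(1)}-\bR^{(2)}|$, and Lemma~\ref{prop:BC} (valid for \emph{any} pair of rotations, as you rightly note) closes the argument. Your remark about the normalized versus unnormalized $L^p$-norms in \eqref{eqn:GR} is a harmless bookkeeping point that the paper handles the same way.
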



\begin{proof}  Fix $p>n$ and suppose that
$\bu^{(i)}\in W^{1,p}(\Omega;\R^n)$,
$i=1,2$.  Then,  in view of Proposition~\ref{prop:GR}, there exist
rotations $\bR^{(i)}\in\SOn$ that satisfy
\be\label{eqn:GR-again}
\big\|\grad\bu^{(i)}-\bR^{(i)}\big\|_{p,\Omega}
\le
 C \big\|\dist\!\big(\grad\bu^{(i)},\SOn\big)\big\|_{p,\Omega}
\ee
\n for some constant $C=C(p,\Om)$.  If we now add and subtract $\bR^{(1)}$
and $\bR^{(2)}$ from $\grad\bu^{(1)}-\grad\bu^{(2)}$ and take the
$L^1$-norm of the result we find, with the aid of the triangle
inequality, that
\be\label{eqn:triangle}
\big\|\grad\bu^{(1)}-\grad\bu^{(2)}\big\|_{1,\Om}
\le
|\Om|\big|\bR^{(1)}-\bR^{(2)}\big| +\sum_{i=1}^2
\big\|\grad\bu^{(i)}-\bR^{(i)}\big\|_{1,\Omega}.
\ee
\n The desired result, \eqref{eqn:BC+SR=close-in-L1}, now follows from
\eqref{eqn:triangle}, \eqref{eqn:GR-again}, Lemma~\ref{prop:BC},
and H\"older's inequality.
\end{proof}


\begin{proof}[Proof of Lemma~\ref{prop:BC}]
Given $\bu^{(i)}\in W^{1,p}(\Omega;\R^n)$ and $\bR^{(i)}\in\SOn$
define, for $i=1,2$,
\[
d_i:=||\grad\bu^{(i)}-\bR^{(i)}||_{p,\Omega}, \qquad
\ba^{(i)}
:=
\langle\bu^{(i)}-\bR^{(i)}\bx\rangle_\Omega.
\]
\n Then, the Poincar\'{e} inequality
(see, e.g., \cite[p.~361]{Le09} or \cite[p.~218]{LL01})
yields a constant $ P > 0$, which is independent of
$\bu^{(i)}$, $\bR^{(i)}$, and $\ba^{(i)}$, such that
\be\label{eqn:P}
 ||\bu^{(i)}-\bR^{(i)}\bx-\ba^{(i)}||_{W^{1,p}(\Omega)}
\le
P d_i.
\ee
\n Next, since $p>n$ we have the imbedding (see, e.g.,
\cite[Section~4.27]{AF03})
$W^{1,p}(\Omega) \to C^{0,\lambda}(\overline{\Omega})$,
i.e., there is a constant $M>0$ such that,
for every $\bx,\by\in\overline{\Omega}$ with $\bx\ne\by$,
\be\label{eqn:ME}
||\bv^{(i)}||_{\infty,\Omega} +
\frac{|\bv^{(i)}(\bx)-\bv^{(i)}(\by)|}{|\bx-\by|^\lambda}
\le M||\bv^{(i)}||_{W^{1,p}(\Omega)},
\ee
\n  where
$\bv^{(i)}(\bx):=\bu^{(i)}(\bx)-\bR^{(i)}\bx-\ba^{(i)}$.
Here $\lambda:=1-n/p$.
We now note that \eqref{eqn:P} together with \eqref{eqn:ME}
implies that, for all $\bx,\by\in\overline{\Omega}$,
\be\label{eqn:HC}
|\bv^{(i)}(\bx)-\bv^{(i)}(\by)|\le  M P d_i|\bx-\by|^\lambda.
\ee


Now, suppose that $\bx,\by\in\sD$; then
$\bu^{(1)}(\bx)=\bu^{(2)}(\bx)$ and
$\bu^{(1)}(\by)=\bu^{(2)}(\by)$.  Thus,
\be\label{eqn:both}
\begin{aligned}
\big(\bR^{(1)}-\bR^{(2)}\big)[\by-\bx]
&=
\bR^{(1)}[\by-\bx]-\bR^{(2)}[\by-\bx]\\
&+\big(\bu^{(1)}(\bx)-\bu^{(1)}(\by)\big)
-\big(\bu^{(2)}(\bx)-\bu^{(2)}(\by)\big).
\end{aligned}
\ee
\n Define $\bR:=[\bR^{(1)}]^\rmT\bR^{(2)}\in\SOn$ and note that,
for all $\bb\in\R^n$,
\be\label{eqn:del-1-2}
\big|\big(\bR^{(1)}-\bR^{(2)}\big)\bb\big|
=
\big|\bR^{(1)}\big(\bI-\big[\bR^{(1)}\big]^\rmT\bR^{(2)}\big)\bb\big|
=
|(\bI-\bR)\bb|.
\ee
\n Therefore, if we take the norm of \eqref{eqn:both}, the
triangle inequality together with \eqref{eqn:HC}, the definition of
the $\bv^{(i)}$, and  \eqref{eqn:del-1-2} yield
\be\label{eqn:final-a}
\big|(\bI-\bR)[\by-\bx]\big|\le M P d|\by-\bx|^\lambda\
\text{ for all $\bx,\by\in\sD$,}
\ee
\n where $d:=d_1+d_2$.


Next, $\partial \Omega$ is Lipschitz; thus, we can fix an
$\bx\oo\in\sD$ where $\partial \Omega$ has a unique outward unit normal
vector and tangent hyperplane.  Then,
\emph{with a change in coordinates,} let $\bx\oo=\mathbf{0}$ and suppose
that $\{\bee_1,\bee_2,\ldots,\bee_n\}$ is a basis for $\R^n$
(with the standard orientation) with $\bee_n$ the outward
unit normal at $\mathbf{0}$ and the tangent hyperplane,
$\sT\subset\R^n$, at $\mathbf{0}$
given as the span of $\{\bee_1,\bee_2,\ldots,\bee_{n-1}\}$.  Moreover,
since $\sD$ is relatively open and $\partial \Omega$ is Lipschitz,
there exists an open ball
$B=B(\mathbf{0},2r)\subset \R^{n-1}$ and a Lipschitz function
$\gamma:B\to\R$ such that $(\bz,\gamma(\bz))$ with $\bz\in B$
is a relatively open subset of $\sD$ and $\gamma(\mathbf{0})=0$.


For any $\bz\in\R^{n-1}$ that satisfies $|\bz|\le r$,
inequality \eqref{eqn:final-a} implies that
\be\label{eqn:bound-at-B}
|(\bI-\bR)\bt|
\le
|(\bI-\bR)\by_\gamma|
\le
M P d|\by_\gamma|^\lambda,  \qquad \by_\gamma:=(\bz, \gamma(\bz)),
\qquad
\bt=(\bz, 0)\in \sT.
\ee
\n Also, $\gamma$ is Lipschitz continuous; consequently,
 there exists a $L>0$ such that
(recall that $\gamma(\mathbf{0})=0$)
\be\label{eqn:Lipschitz}
|\gamma(\bz)|\le L |\bz|
\quad
\text{ and hence }
\quad
|\by_\gamma|
\le \sqrt{1 + L^2}\, |\bz|.
\ee
\n If we now combine \eqref{eqn:bound-at-B} and \eqref{eqn:Lipschitz}
we find that, for all $\bt\in\sT$ with $|\bt|\le r$,
\be\label{eqn:tan-small-1}
|(\bI-\bR)\bt|
\le
G d |\bz|^\lambda = G d |\bt|^\lambda,  \qquad \bt=(\bz, 0),
\ee
\n where $G=G(p,n,\Omega):= M P (1+L^2)^{\lambda/2}$.  In particular,
the choice $\bt=r\bee_k$, $k=1,2,3,\ldots,n-1$ in
\eqref{eqn:tan-small-1} yields
\be\label{eqn:tan-small-2}
|(\bI-\bR)\bee_k|\le Gd r^{\lambda-1}\ \text{ for $1\le k\le n-1$.}
\ee


Finally, we shall show that\footnote{Recall that in $2$-dimensions
all rotations commute.  Consider the rotation, $\bQ_{12}$, that
satisfies $\bQ_{12}\bee_1=\bee_2$. It follows that
$(\bI-\bR)\bee_2=(\bI-\bR)\bQ_{12}\bee_1=\bQ_{12}(\bI-\bR)\bee_1$ and
hence $|(\bI-\bR)\bee_2|=|\bQ_{12}(\bI-\bR)\bee_1|=|(\bI-\bR)\bee_1|$,
which, by \eqref{eqn:tan-small-2}, is bounded above
by $G d r^{\lambda-1}$.},
if $n\ge3$, then \eqref{eqn:tan-small-2}
is also satisfied when $k=n$ and $G$ is replaced by $(n-1) G X$,
where $X$ is the constant
from \eqref{eqn:wedge-2}$_2$.  This will imply that
(see \eqref{eqn:del-1-2} and \eqref{eqn:bound-at-B}$_2$)
\[
\big|\bR^{(1)}-\bR^{(2)}\big|
\le
\sqrt{n}\sup_{|\bee|=1} |(\bI-\bR)\bee|
\le
\sqrt{n}(n-1)X G(d_1+d_2) r^{\lambda-1},
\]
\n which is \eqref{eqn:rotations-close}
with $A=\sqrt{n}(n-1) M P X r^{\lambda-1}(1+L^2)^{\lambda/2}$.


In order to estimate $|\bR\bee_{n}-\bee_{n}|$ we first make use of
\eqref{eqn:wedge-1} and \eqref{eqn:wedge-2}$_1$ to write
\be\label{eqn:diff-1}
\bR\bee_{n}-\bee_{n}
=
\big[\bR\bee_1\wedge\bR\bee_2\wedge\ldots\wedge\bR\bee_{n-1}\big]
-\big[\bee_1\wedge\bee_2\wedge\ldots\wedge\bee_{n-1}\big].
\ee
\n Then, if we subtract and then add terms of the form
\[
\bee_1\wedge\ldots\wedge\bee_{k-1}\wedge\bR\bee_k\wedge
\ldots\wedge\bR\bee_{n-1}
\]
\n to the right-hand side of \eqref{eqn:diff-1}, we find that
\be\label{eqn:diff-2}
\bR\bee_{n}-\bee_{n}
=
\sum_{k=1}^{n-1}\Big[
\bee_1\wedge\ldots\wedge\bee_{k-1}
\wedge(\bR\bee_k-\bee_k)
\wedge\bR\bee_{k+1}\wedge\ldots\wedge\bR\bee_{n-1}\Big].
\ee
\n Taking the norm of \eqref{eqn:diff-2} and making use of the triangle
inequality together with \eqref{eqn:wedge-2}$_2$ and the fact that,
for all $k$, $|\bR\bee_k|=|\bee_k|=1$ yields, with the aid
of \eqref{eqn:tan-small-2},
\[
|\bR\bee_{n}-\bee_{n}|
\le
\sum_{k=1}^{n-1} X|\bR\bee_k-\bee_k|
\le (n-1) G X d r^{\lambda-1},
\]
\n as claimed, which completes the proof.
\end{proof}


\section{Nonlinear Elasticity}\label{sec:NLE}

In the remainder of this manuscript we shall focus on the minimization
problem that arises when one considers the theory of Nonlinear
Elasticity.

\subsection{More Preliminaries}\label{sec:prelim-3}

$\Symn$ will denote the space of \emph{symmetric}
$\bB\in\Mn$, i.e., $\bB=\bB^\rmT$, while $\Psymn$ will denote
those $\bC\in\Symn$ that are \emph{strictly positive definite},
that is, $\ba\cdot\bC\ba>0$ for all nonzero $\ba\in\R^n$.
In the sequel we shall have occasion to consider a function defined
on $\overline{\Omega}\times\cO$, where
$\Omega\subset\R^n$ is a bounded Lipschitz domain
and $\cO\subset\Mn$ is a nonempty, open set.

\begin{definition}\label{def:uniform} Let
$\Phi:\overline{\Omega}\times\cO\to\R$. We say that
$\bF\mapsto\Phi(\bx,\bF)$ is \emph{continuous, almost uniformly in}
$\bx\in\Omega$, \emph{at} $\bF\oo\in\cO$, provided that,
for every $\varepsilon>0$, there
exists a $\delta>0$ such that, for $a.e.~\bx\in\Omega$,
\[
|\Phi(\bx,\bF)-\Phi(\bx,\bF\oo)|<\varepsilon\quad
\text{whenever}\quad
|\bF-\bF\oo|<\delta.
\]
\n More generally, we say that $\bF\mapsto\Phi(\bx,\bF)$
is  $C^2$, \emph{almost uniformly in} $\bx$, \emph{on} $\cO$,
provided $\bF\mapsto\Phi(\bx,\bF)$ and
its first two derivatives are each continuous, almost uniformly
in $\bx\in\Omega$, at every
$\bF\in\cO$.
\end{definition}


\subsection{The Constitutive Relation}\label{sec:CR}

We consider a \emph{body} that for convenience we identify with the
closure of a \emph{bounded Lipschitz domain} $\Omega\subset\R^n$,
$n=2$ or $n=3$, which it
occupies in a fixed reference configuration.  A \emph{deformation}
of $\overline{\Omega}$ is a mapping that lies in the space
\[
\Def:=\{\bu\in W^{1,1}(\Omega;\R^n): \det\grad\bu>0\ a.e.\},
\]
\n where $\det\bF$ denotes the determinant of $\bF\in\Mn$.
We define $\cO\subset \Mn$ by
\[
\cO:=\Mnp=\{\bF\in\Mn: \det\bF>0\}.
\]

We assume that the body is composed of a hyperelastic material with
\emph{stored-energy density}
$W:\overline{\Omega}\times\Mnp\to[0,\infty)$.
$W(\bx,\grad\bu(\bx))$  gives the elastic energy stored
at almost every point $\bx\in\Omega$ of the body when it
undergoes the deformation $\bu\in\Def$.
We assume that the response of the material is
\emph{Invariant under a Change in Observer}
and hence
that\footnote{All of the equations (and inequalities) in this section
are valid only for almost every $\bx\in\Omega$. For clarity of
exposition we have sometimes suppressed this dependence on $\bx$.}
\be\label{eqn:ICO}
 W(\bx,\bQ\bF)=W(\bx,\bF)\quad
\text{for every $\bF\in\Mnp$ and $\bQ\in\SOn$}.
\ee
\n In the sequel we shall have occasion to assume that $W$ also satisfies
(1)--(3) in Hypothesis~\ref{def:W}.   For the moment we suppose that
$\bF\mapsto W(\bx,\bF)$ is $C^2$.


Rather than view the derivatives of $W$ as multilinear forms, as we did
in Section~\ref{sec:I-ELE}, we shall instead follow the usual convention in
Continuum Mechanics (see, e.g., \cite{Ci1988,Gu81});
the (Piola-Kirchhoff) \emph{stress} is
the derivative
\[
\bS(\bx,\bF):= \frac{\partial}{\partial\bF}W(\bx,\bF),
\qquad \bS :\overline{\Omega}\times\Mnp\to\Mn.
\]
\n  The
\emph{Elasticity Tensor} is the second derivative of
$\bF\mapsto W(\bx,\bF)$, that is,
\[
\A(\bx,\bF):=\frac{\partial^2}{\partial\bF^2} W(\bx,\bF),
\qquad
\A:\overline{\Omega}\times\Mnp\to\Lin(\Mn;\Mn),
\]
\n where $\Lin(\mathcal{U};\mathcal{V})$ denotes the
set of linear maps from
the vector space $\mathcal{U}$ to the vector space $\mathcal{V}$.


\begin{remark} In the notation of Section~\ref{sec:I-ELE} and in view of the
symmetry of the second gradient
\[
\begin{gathered}
\bS(\bx,\bF):\bH = \DD W(\bx,\bF)[\bH],\\[2pt]
\bH:\A(\bx,\bF)[\bK]=\bK:\A(\bx,\bF)[\bH] = \DD^2 W(\bx,\bF)[\bH,\bK],
\end{gathered}
\]
\n for all $\bF\in\Mnp$ and all $\bH,\bK\in\Mn$.
\end{remark}

\begin{definition}  The reference configuration is said to be
\emph{stress free} provided that,
\be\label{eqn:SF}
\bS(\bx,\bI) =\mathbf{0}\ \text{ for $a.e.~\bx\in\Omega$.}
\ee
\n If the reference configuration is stress free, then
Elasticity Tensor at the reference
configuration is said to be
\emph{uniformly positive definite}\footnote{One
consequence of \eqref{eqn:ICO} and \eqref{eqn:SF} is that
$\A(\bx,\bI)[\bK]=\mathbf{0}$
for all $\bK\in\Mn$ that satisfy $\bK^\rmT=-\bK$.},
provided that there exists
a constant $c>0$ such that, for every $\bH\in\Mn$
and $a.e.~\bx\in\Omega$,
\[
\bH:\A(\bx,\bI)[\bH]\ge c |\bH+\bH^\rmT|^2.
\]
\end{definition}


  We next note, once again, that
every $\bF\in\Mnp$ has a unique polar
decomposition $\bF=\bR\bU$, where $\bU:=\sqrt{\bF^\rmT\bF}\in \Psymn$
and $\bR:=\bF\bU^{\mi1}\in\SOn$.  Equation
\eqref{eqn:ICO} then implies that $W(\bx,\bF)=W(\bx,\bU)$.
With this in mind we define
$\sigma:\overline{\Omega}\times\Psymn\to\R$  by
\be\label{eqn:def-sigma}
\sigma(\bx,\bC):= W(\bx,\sqrt{\bC}).
\ee
\n Since $\bC\mapsto\sqrt{\bC}$ is $C^\infty$ on  $\Psymn$ our
assumptions
(1)--(3) in Hypothesis~\ref{def:W} yield the same
properties for $\sigma$. In particular, we can differentiate the
identity
\be\label{eqn:W=sigma}
W(\bx,\bF)=W(\bx,\bU)=\sigma(\bx,\bU^2)=\sigma(\bx,\bF^\rmT\bF).
\ee
\n However, we shall need additional smoothness assumptions on $W$ in
order to show that the second variation is uniformly positive near the
set of rotations.   In the sequel we shall therefore sometimes assume
that (see Definition~\ref{def:uniform})
\be\label{eqn:sigma-C2}
\bC\mapsto\sigma(\bx,\bC) \text{ is $C^2$, almost uniformly
in $\bx$, on $\Psymn$,}
\ee
\n and hence, in view of \eqref{eqn:def-sigma}--\eqref{eqn:W=sigma},
that $\bF\mapsto W(\bx,\bF)$ is $C^2$, almost uniformly in
$\bx$, on $\Mnp$.


\begin{remark}\label{rem:small-strain} Note that
\eqref{eqn:W=sigma}
implies that $W$ satisfies \eqref{eqn:ICO}.
\end{remark}


\n The next well-known result shows that our assumptions on
$W$ yield similar properties for $\sigma$.
\begin{lemma}\label{lem:RC}  Let $\sigma$ satisfy
(\ref{eqn:def-sigma})--(\ref{eqn:sigma-C2}).
Then, for all $\bF\in\Mnp$, all $\bH\in\Mn$, and $a.e.~\bx\in\Omega$,
\be\label{eqn:Dsigma-and-D2sigma}
\begin{aligned}
\bS(\bx,\bF)&=2\bF\,\DD\sigma(\bx,\bF^\rmT\bF),\\[3pt]
\bH:\A(\bx,\bF)[\bH]
&=
\big(\bH^\rmT\bF+\bF^\rmT\bH\big)
: \DD^2\sigma(\bx,\bF^\rmT\bF)[\bH^\rmT\bF+\bF^\rmT\bH]\\
&+
2\DD\sigma(\bx,\bF^\rmT\bF):[\bH^\rmT\bH].
\end{aligned}
\ee
\n Moreover, suppose that, for $a.e.~\bx\in\Omega$,
$\bS(\bx,\bI) =\mathbf{0}$ and
\be\label{eqn:positive-definite-2}
\bH:\A(\bx,\bI)[\bH]\ge c |\bH+\bH^\rmT|^2\
\text{ for all $\, \bH\in\Mn$.}
\ee
\n Then, for $a.e.~\bx\in\Omega$,
$\DD\sigma(\bx,\bI)=\mathbf{0}$ and
\be\label{eqn:ET-at-I-PD-alt}
\bB:\DD^2\sigma(\bx,\bI)[\bB]
\ge c |\bB |^2\  \text{ for all $\, \bB\in\Symn$.}
\ee
\n Here $\DD^k\sigma(\bx,\bC)$ denotes the $k$-\emph{th}
derivative of the function $\bC\mapsto\sigma(\bx,\bC)$.
\end{lemma}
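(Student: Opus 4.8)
The plan is to produce both identities in \eqref{eqn:Dsigma-and-D2sigma} by differentiating the relation \eqref{eqn:W=sigma}, namely $W(\bx,\bF)=\sigma(\bx,\bF^\rmT\bF)$, with respect to $\bF$ at a fixed $\bx$ (where $\sigma(\bx,\cdot)\in C^2(\Psymn)$ by \eqref{eqn:def-sigma}--\eqref{eqn:sigma-C2}), and then to specialize to $\bF=\bI$ for the ``moreover'' part. At the outset I would record two bookkeeping points: since $\sigma(\bx,\cdot)$ is defined on the open subset $\Psymn$ of $\Symn$, its derivatives $\DD\sigma(\bx,\bC)\in\Symn$ and $\DD^2\sigma(\bx,\bC)$ may be taken symmetric; and for $\bF\in\Mnp$ one has $\bF$ invertible, hence $\bF^\rmT\bF\in\Psymn$, so the right-hand sides of \eqref{eqn:Dsigma-and-D2sigma} are meaningful.

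First I would differentiate once. Writing $\bC:=\bF^\rmT\bF$ and noting that the derivative of $\bF\mapsto\bF^\rmT\bF$ in the direction $\bH$ is the symmetric matrix $\bH^\rmT\bF+\bF^\rmT\bH$, the chain rule gives
\be
\DD W(\bx,\bF)[\bH]=\DD\sigma(\bx,\bC):(\bH^\rmT\bF+\bF^\rmT\bH).
\ee
Since $\bS(\bx,\bF):\bH=\DD W(\bx,\bF)[\bH]$, it remains to rewrite the right-hand side in the form $\bM:\bH$. Using $\bA:\bB=\tr(\bA\bB^\rmT)=\tr(\bA^\rmT\bB)$, the cyclic property of the trace, and the symmetry $\DD\sigma(\bx,\bC)^\rmT=\DD\sigma(\bx,\bC)$, each of the two terms $\DD\sigma(\bx,\bC):(\bH^\rmT\bF)$ and $\DD\sigma(\bx,\bC):(\bF^\rmT\bH)$ equals $\bigl(\bF\DD\sigma(\bx,\bC)\bigr):\bH$; adding them yields $\bS(\bx,\bF)=2\bF\DD\sigma(\bx,\bF^\rmT\bF)$, which is \eqref{eqn:Dsigma-and-D2sigma}$_1$.

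Next I would differentiate the displayed first-order identity once more, viewing its right-hand side as the pairing of the matrix-valued maps $\bF\mapsto\DD\sigma(\bx,\bF^\rmT\bF)$ and $\bF\mapsto\bH^\rmT\bF+\bF^\rmT\bH$ and applying the product rule. The first factor has derivative (in the direction $\bK$) equal to $\DD^2\sigma(\bx,\bC)[\bK^\rmT\bF+\bF^\rmT\bK]$ by the chain rule, while the second, being linear in $\bF$, has derivative $\bH^\rmT\bK+\bK^\rmT\bH$. Setting $\bK=\bH$ and recalling $\bH:\A(\bx,\bF)[\bH]=\DD^2 W(\bx,\bF)[\bH,\bH]$ gives precisely \eqref{eqn:Dsigma-and-D2sigma}$_2$, the term $\DD\sigma(\bx,\bC):(\bH^\rmT\bH+\bH^\rmT\bH)=2\DD\sigma(\bx,\bC):\bH^\rmT\bH$ already being in the desired form.

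Finally, for the ``moreover'' part I would set $\bF=\bI$, so $\bC=\bI$. Identity \eqref{eqn:Dsigma-and-D2sigma}$_1$ then reads $\bS(\bx,\bI)=2\DD\sigma(\bx,\bI)$, so $\bS(\bx,\bI)=\mathbf{0}$ forces $\DD\sigma(\bx,\bI)=\mathbf{0}$; using this, \eqref{eqn:Dsigma-and-D2sigma}$_2$ at $\bF=\bI$ collapses to $\bH:\A(\bx,\bI)[\bH]=(\bH+\bH^\rmT):\DD^2\sigma(\bx,\bI)[\bH+\bH^\rmT]$. Given $\bB\in\Symn$, the choice $\bH=\tfrac12\bB$ makes $\bH+\bH^\rmT=\bB$, whence $\bB:\DD^2\sigma(\bx,\bI)[\bB]=\bH:\A(\bx,\bI)[\bH]\ge c\,|\bH+\bH^\rmT|^2=c\,|\bB|^2$ by \eqref{eqn:positive-definite-2}; this is \eqref{eqn:ET-at-I-PD-alt}. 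There is no deep difficulty here: the content is the careful chain/product rule for matrix-valued functions together with the trace--transpose bookkeeping that converts $\DD\sigma(\bx,\bC):(\bH^\rmT\bF+\bF^\rmT\bH)$ into $2\bF\DD\sigma(\bx,\bC):\bH$, and the one point requiring a line of justification is that $\DD\sigma$ and $\DD^2\sigma$ may be taken symmetric (because $\sigma(\bx,\cdot)$ lives on $\Symn$), which is exactly what makes the two ``cross'' terms coincide and what lets the substitution $\bH=\tfrac12\bB$ reach all of $\Symn$.
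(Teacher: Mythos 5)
Your proposal is correct and follows essentially the same route as the paper: differentiate $W(\bx,\bF)=\sigma(\bx,\bF^\rmT\bF)$ once to obtain $\bS(\bx,\bF):\bH=\DD\sigma(\bx,\bF^\rmT\bF):[\bH^\rmT\bF+\bF^\rmT\bH]$, differentiate again for the second identity, and then set $\bF=\bI$ and use $\bH=\tfrac12\bB$ to reach all of $\Symn$. The paper's proof is terser; your extra care with the trace--transpose bookkeeping and the symmetry convention for $\DD\sigma$ on $\Psymn\subset\Symn$ only makes explicit what the paper leaves implicit.
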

\begin{proof}
If we differentiate \eqref{eqn:W=sigma} with respect to $\bF$, we
find that, for all $\bF\in \Mnp$ and $\bH\in\Mn$,
\be\label{eqn:DW-and-Dsigma-1}
\bS(\bx,\bF):\bH
=\DD\sigma(\bx,\bF^\rmT\bF):[\bH^\rmT\bF+\bF^\rmT\bH],
\ee
\n which implies \eqref{eqn:Dsigma-and-D2sigma}$_1$.
If we then differentiate \eqref{eqn:DW-and-Dsigma-1}
with respect to $\bF$ we deduce \eqref{eqn:Dsigma-and-D2sigma}$_2$.
Next, let $\bF=\bI$ in \eqref{eqn:Dsigma-and-D2sigma}$_1$,
to conclude, with the aid of $\bS(\bx,\bI)=\mathbf{0}$, that
$\DD\sigma(\bx,\bI)=\mathbf{0}$.


If we take $\bF=\bI$ in \eqref{eqn:Dsigma-and-D2sigma}$_2$ we
find that
\[
\bH:\A(\bx,\bI)[\bH]=
\big(\bH^\rmT+\bH\big):\DD^2\sigma(\bx,\bI)[\bH^\rmT+\bH],
\]
\n which together with \eqref{eqn:positive-definite-2} yields
\be\label{eqn:positive-definite-3}
\big(\bH^\rmT+\bH\big):\DD^2\sigma(\bx,\bI)[\bH^\rmT+\bH]
\ge c |\bH^\rmT+\bH|^2.
\ee
\n Finally, inequality \eqref{eqn:positive-definite-3} yields
\eqref{eqn:ET-at-I-PD-alt} for all symmetric $\bB$.
\end{proof}


Given a deformation $\bu\in\Def$, the matrix
$\bC_\bu(\bx):=[\bF(\bx)]^\rmT\bF(\bx)$, $\bF:=\grad\bu$, is known as
the \emph{right Cauchy-Green strain tensor.}  It can be used to measure
the change in the length of a curve in the reference configuration after
it is deformed by $\bu$.  The matrix
\be\label{eqn:strain}
\bE_\bu(\bx):=\tfrac12(\bC_\bu(\bx)-\bI)
=
\tfrac12\big([\bF(\bx)]^\rmT\bF(\bx)-\bI\big)
\ee
\n is sometimes referred to as the (nonlinear)
\emph{strain.}\footnote{See, e.g., \cite[Section~2.2.7]{Og84} for
a discussion
of various measures of strain.}  The linearization of $\bE$ at
$\bF=\bI$ yields the strain tensor used in the classical theory
of Linear Elasticity.  The advantage of using $\bE$, rather
than $\bC$, is that $\bE=\mathbf{0}$ corresponds to an undeformed body.
We next note that
\emph{a deformation has uniformly small strains if
and only if it is uniformly close to the set of rotations.}

\begin{lemma}\label{lem:strain=dist-to-rotations} Let $\bF\in\Mnp$.
Then
\be\label{eqn:strain=dist-to-rotations}
\big[\dist\!\big(\bF,\SOn\big)\big]^2 \le
2\sqrt{n}\,|\bE|
\le
\sqrt{n}\dist\!\big(\bF,\SOn\big)
\big[\dist\!\big(\bF,\SOn\big) +2\sqrt{n}\,\big].
\ee
\end{lemma}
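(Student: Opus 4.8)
The plan is to work entirely with the polar decomposition $\bF=\bR\bU$, $\bU=\sqrt{\bF^\rmT\bF}\in\Psymn$, so that by Lemma~\ref{lem:alt-dist} we have $\dist(\bF,\SOn)=|\bU-\bI|$, and to relate $|\bU-\bI|$ to $|\bE|$ using the factorization
\[
2\bE=\bF^\rmT\bF-\bI=\bU^2-\bI=(\bU-\bI)(\bU+\bI).
\]
First I would establish the left inequality. Since $\bU$ is symmetric positive definite, so is $\bU+\bI$, and in fact $\bU+\bI\ge\bI$ in the sense of quadratic forms; hence for the symmetric matrix $\bU-\bI$ one has, eigenvalue by eigenvalue (diagonalize $\bU$, whose eigenvalues $\mu_i>0$ give $(\bU-\bI)(\bU+\bI)$ the eigenvalues $\mu_i^2-1$ and $\bU-\bI$ the eigenvalues $\mu_i-1$),
\[
|2\bE|^2=\sum_i(\mu_i^2-1)^2=\sum_i(\mu_i-1)^2(\mu_i+1)^2\ge\sum_i(\mu_i-1)^2=|\bU-\bI|^2,
\]
so $|\bU-\bI|^2\le 2|\bE|$. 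Wait — the desired bound is $\dist(\bF,\SOn)^2\le 2\sqrt n\,|\bE|$, which is weaker than $|\bU-\bI|^2\le 2|\bE|$; so the left inequality follows immediately (the $\sqrt n$ is simply slack, inserted for symmetry with the right-hand estimate), and this step presents no difficulty.

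For the right inequality I would again pass to the eigenvalues of $\bU$. Writing $d:=\dist(\bF,\SOn)=|\bU-\bI|$, one has $|\mu_i-1|\le d$ for each $i$, hence $\mu_i+1\le 2+d\le 2+d$; more crudely, $\mu_i+1=|\mu_i-1|+\text{(something)}$, and in any case the Frobenius norm of the symmetric matrix $\bU+\bI$ satisfies $|\bU+\bI|\le|\bU-\bI|+|2\bI|=d+2\sqrt n$. Then, using the submultiplicativity of the Frobenius norm together with the comparison $|\bE|\le\sqrt n\,\|\bE\|_{\mathrm{op}}$ (or more directly $|2\bE|=|(\bU-\bI)(\bU+\bI)|\le|\bU-\bI|\,\cdot$ operator-norm of $\bU+\bI$, and then converting operator norm to Frobenius norm at the cost of $\sqrt n$), one gets
\[
2|\bE|=|(\bU-\bI)(\bU+\bI)|\le \sqrt n\,|\bU-\bI|\,\|\bU+\bI\|_{\mathrm{op}}\le\sqrt n\, d\,(d+2\sqrt n),
\]
which is exactly $2\sqrt n\,|\bE|\le n\,d(d+2\sqrt n)$, i.e. the claimed right-hand inequality after dividing by $\sqrt n$. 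The one point requiring a little care is choosing the right mix of Frobenius versus operator norms so that the constant lands at precisely $\sqrt n$ rather than $n$ or $n^{3/2}$: the cleanest route is $|\bH\bK|\le|\bH|\,\|\bK\|_{\mathrm{op}}$ together with $\|\bU+\bI\|_{\mathrm{op}}=\max_i(\mu_i+1)\le 2+\max_i|\mu_i-1|\le 2+|\bU-\bI|\le 2+\sqrt n\cdot(\text{no}) $ — actually $\max_i|\mu_i-1|\le|\bU-\bI|=d$ directly, so $\|\bU+\bI\|_{\mathrm{op}}\le 2+d$, and then $2|\bE|\le\sqrt n\,d\,(2+d)$, which is even slightly stronger than asserted and certainly implies the stated bound. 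I expect the main (mild) obstacle to be exactly this bookkeeping of norm conversions; the underlying matrix algebra is otherwise routine.
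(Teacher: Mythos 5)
Your right-hand inequality is essentially sound (modulo a bookkeeping slip discussed below), but your left-hand inequality has a genuine gap. From the eigenvalue computation you correctly obtain $|2\bE|^2=\sum_i(\mu_i-1)^2(\mu_i+1)^2\ge\sum_i(\mu_i-1)^2=|\bU-\bI|^2$, but taking square roots this gives $|\bU-\bI|\le 2|\bE|$, \emph{not} $|\bU-\bI|^2\le 2|\bE|$ as you then assert. The squared version is false in general: take $\bU=\lambda\bI$ with $\lambda$ large, so that $d:=|\bU-\bI|=\sqrt{n}(\lambda-1)$ and $2|\bE|=\sqrt{n}(\lambda^2-1)$; then $d^2\le 2|\bE|$ would require $\sqrt{n}(\lambda-1)\le\lambda+1$, which fails for $n\ge2$ and $\lambda$ large. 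For the same reason the factor $\sqrt{n}$ in the lemma is not ``slack inserted for symmetry'' --- it is exactly what is needed to make $d^2\le 2\sqrt{n}\,|\bE|$ true for large strains, and the linear bound $d\le 2|\bE|$ does not imply it. The paper gets the $\sqrt{n}$ from Cauchy--Schwarz: with $\lambda_k$ the eigenvalues of $\bU$, one has $|\lambda_k-1|^2\le|\lambda_k^2-1|$ (since $|\lambda_k-1|\le\lambda_k+1$ for $\lambda_k>0$), hence $d^4=\bigl(\sum_k|\lambda_k-1|^2\bigr)^2\le\bigl(\sum_k|\lambda_k^2-1|\bigr)^2\le n\sum_k|\lambda_k^2-1|^2=4n|\bE|^2$, which yields $d^2\le 2\sqrt{n}\,|\bE|$. (Your linear estimate $d\le 2|\bE|$ is nonetheless correct and is exactly the paper's auxiliary inequality in Remark~\ref{rem:d<2E}, recorded there precisely because it is useful in the small-strain regime but does not scale correctly for large strains.)

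On the right-hand inequality: your underlying idea is fine and slightly sharper than the paper's, which simply uses Frobenius submultiplicativity, $2|\bE|\le|\bU-\bI|\,|\bU+\bI|\le d\,(|\bU|+\sqrt{n})\le d\,(d+2\sqrt{n})$. However, your final displayed bound $2|\bE|\le\sqrt{n}\,d\,(2+d)$ carries a spurious factor of $\sqrt{n}$: the estimate $|\bH\bK|\le|\bH|\,\|\bK\|_{\mathrm{op}}$ together with $\|\bU+\bI\|_{\mathrm{op}}\le 2+d$ gives $2|\bE|\le d\,(2+d)$ with no $\sqrt{n}$, and that is what implies the assertion (since $d(2+d)\le d(d+2\sqrt{n})$). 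As written, $\sqrt{n}\,d\,(2+d)=\sqrt{n}\,d^2+2\sqrt{n}\,d\ge d^2+2\sqrt{n}\,d$, so your stated bound is weaker than the lemma's, not stronger; you should delete the extra $\sqrt{n}$.
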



\begin{proof}  Define $\bA\in \Mn$ by
$\bA:=\diag\{|a_1|,|a_2|,\ldots,|a_n|\}$, where
$a_k\in\R$.  Then, by the Cauchy-Schwarz inequality,
\be\label{eqn:holder-counting}
\bigg(\sum_{k=1}^n |a_k| \bigg)^{\!2}
=|\bA:\bI|^2\le |\bA|^2 |\bI|^2
=  n\sum_{k=1}^n |a_k|^2.
\ee
\n Next, by the spectral theorem, $\bU=\sqrt{\bC}$
has eigenvalues $0<\lambda_1\le\lambda_2\le\cdots\le\lambda_n$.
Since $|\lambda_k-1|^2\le |\lambda_k^2-1|$ the choice $a_k=\lambda_k^2-1$
in \eqref{eqn:holder-counting} yields, with the aid of
\eqref{eqn:strain},
\[ 
|\bU-\bI|^4=\bigg(\sum_{k=1}^n |\lambda_k-1|^2 \bigg)^{\!2}
\le  n\sum_{k=1}^n |\lambda_k^2-1|^2 = 4n|\bE|^2,
\] 
\n which together with Lemma~\ref{lem:alt-dist} establishes the
first inequality in \eqref{eqn:strain=dist-to-rotations}.

The identity $\bC=\bU^2$
together with \eqref{eqn:strain}, Lemma~\ref{lem:alt-dist}, and
the triangle inequality gives us
\[
\begin{aligned}
2|\bE|
&=
|(\bU-\bI)(\bU+\bI)|
\le
\dist\!\big(\bF,\SOn\big)\big(|\bU|+\sqrt{n}\,\big),\\
|\bU| &=|\bU-\bI+\bI|\le \dist\!\big(\bF,\SOn\big) +\sqrt{n},
\end{aligned}
\]
\n which together yield the second inequality in
\eqref{eqn:strain=dist-to-rotations}.
\end{proof}

\begin{remark}\label{rem:d<2E}  We note for future reference that
$|\lambda_k-1|\le|\lambda_k-1||\lambda_k+1|=|\lambda_k^2-1|$ and hence,
in view of  Lemma~\ref{lem:alt-dist} and \eqref{eqn:strain},
\be\label{eqn:d<2E}
\big[\dist\!\big(\bF,\SOn\big)\big]^2=|\bU-\bI|^2=\sum_{k=1}^n
|\lambda_k-1|^2 \le \sum_{k=1}^n |\lambda_k^2-1|^2 = 4|\bE|^2.
\ee
\n Although \eqref{eqn:d<2E} does not scale properly for large
strains, its use will simplify the small strain computation in one
of our proofs.
\end{remark}


\subsection{Equilibrium Solutions and Energy Minimizers in Nonlinear
Elasticity}\label{sec:ES-EM-NLE}

 We assume the body is subject to
dead loads.  As in Section~\ref{sec:I-ELE} we shall let
\[
\partial\Omega = \overline{\sD} \cup \overline{\sS}\quad
\text{with $\sD$ and $\sS$ relatively open and }
\sD\cap\sS=\varnothing.
\]
\n In addition, we shall suppose that $\sD\ne\varnothing$.
We assume that a Lipschitz-continuous function
$\bd:\sD\to\R^n$
is prescribed;  $\bd$ will give the deformation of $\sD$.
If $\sS \ne\varnothing$ we assume that a function
$\bs\in L^2(\sS;\R^n)$
is prescribed; for $\sH^{n-1}$-$a.e.~\bx\in\sS$,
$\bs(\bx)$ will give the
surface force
exerted on the body, at the point $\bx$, by its environment.
Finally, we suppose that a
function $\bb\in L^2(\Omega;\R^n)$ is prescribed;
for $a.e.~\bx\in\Omega$, $\bb(\bx)$ will give the
body force
exerted on the body, at the point $\bx$, by its environment.
  The set of
\emph{Admissible Deformations} will be denoted by
\[
\AD:=\{\bu\in\Def \cap\, W^{1,\infty}(\Omega;\R^n):
\bu=\bd \text{ on $\sD$}\}.
\]
\n The  \emph{total energy} of an admissible deformation
$\bu\in\AD$ is defined to be
\be\label{eqn:TE}
\E(\bu):=\int_\Omega \big[W\big(\bx,\grad\bu(\bx)\big)
-\bb(\bx)\cdot\bu(\bx)\big]\dd\bx
-\int_{\sS} \bs(\bx)\cdot\bu(\bx)\,\dd\sH^{n-1}_\bx.
\ee
\n We shall assume that we are given a deformation,
$\bu\e\in\AD$, that is a weak
solution of the \emph{Equilibrium Equations} corresponding
to \eqref{eqn:TE}, i.e.,
\be\label{eqn:EE}
0=\int_\Omega
\big[\bS\big(\bx,\grad\bu\e(\bx)\big):\grad\bw
-\bb(\bx)\cdot\bw(\bx)\big]\dd\bx
-\int_\sS \bs(\bx)\cdot\bw(\bx)\,\dd \sH^{n-1}_\bx
\ee
\n for all \emph{variations} $\bw\in\Var$, where
\[
\Var:=\{\bw\in W^{1,2}(\Omega;\R^n):
\bw=\mathbf{0} \text{ on $\sD$}\}.
\]


 If $\sD=\partial\Om$ we shall call $\bu\e$ a weak solution of the
\emph{(pure) displacement problem}.  Otherwise, we shall refer to
such a  $\bu\e$ as a weak solution of the
\emph{(genuine) mixed problem}.
If, in addition, $W\in C^2(\Omega\times\Mnp)$ and
$\bu\e\in C^2(\Omega;\R^n)\cap C^1(\overline{\Omega};\R^n)$, then
$\bu\e$ will be a \emph{classical solution of the equations of
equilibrium}, i.e., 
\[
\begin{gathered}
\Div \bS(\grad\bu\e) +\bb= \mathbf{0}\
\text{ in $\Omega$,}\\
 \bS(\grad\bu\e)\bn = \bs\
\text{ on $\sS$,}\qquad
\bu\e=\bd\
\text{ on $\sD$,}
\end{gathered}
\]
\n where $\bn(\bx)$ is the outward unit normal to $\Omega$ at
$\sH^{n-1}$-$a.e.~\bx\in\sS$ and $\Div \bS\in\R^n$ is given by
$(\Div \bS)_i=\sum_j \frac{\partial}{\partial \bx_j}\bS_{ij}$.
We are interested in conditions under which a weak solution of the
equilibrium equations, $\bu\e\in\AD$, is a local minimizer of the
total energy $\E$.  We are also interested in conditions
under which  $\bu\e$ is the unique weak solution of the
equilibrium equations that lies in a neighborhood of $\bu\e$.


\subsection{Uniqueness in $\BMO\cap\, L^1$
Neighborhoods in Elasticity}

We next make note of a direct implication of
Theorem~\ref{thm:SVP=LMBMO} for Elasticity.


\begin{theorem} \label{thm:NT-3.0} Let $W$ satisfy (1)--(3) of
Hypothesis~\ref{def:W}. Suppose that $\bu\e\in\AD$ is a weak solution
of the pure-displacement or mixed problem that satisfies,
for some $\varepsilon>0$ and $k>0$,
\[
\text{$\det\grad\bu\e> \varepsilon\ \ a.e.$}, \qquad
\int_\Omega \grad\bw:\A(\grad\bu\e)[\grad\bw]\,\dd\bx
\ge
4k\int_\Omega |\grad\bw|^2\dd\bx,
\]
\n for all $\bw\in\Var$.  Let $\tau\in\R$ satisfy
$\tau>||\grad\bu\e||_{\infty,\Om}$ and $\tau^{\mi1}<\varepsilon$.
Then there exists a
$\delta=\delta(\tau)>0$ such that any $\bv\in\AD$ that
satisfies  $\det\grad\bv>\tau^{\mi1}~a.e.$,
\be\label{eqn:small+e-t}
||\grad\bv||_{\infty,\Om} <\tau,
\qquad
\tsn{\grad\bv-\grad\bu\e}_{\BMO(\Omega)}<\delta,
\qquad
\Big|\dashint_\Omega (\grad\bv-\grad\bu\e)\,\dd\bx\Big|<\delta,
\ee
\n will also satisfy
\[
\E(\bv)\ge \E(\bu\e)
+k\int_\Omega|\grad\bv-\grad\bu\e|^2\dd\bx.
\]
\n  In particular, $\bv\not\equiv\bu\e$ will have strictly greater
energy than $\bu\e$.  Moreover,  $\bv$ cannot be a weak solution
of the equations of equilibrium, \eqref{eqn:EE}.
\end{theorem}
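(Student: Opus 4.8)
The plan is to deduce the theorem directly from Theorem~\ref{thm:SVP=LMBMO} by specializing the abstract variational framework of Section~\ref{sec:NLP-CV} to Elasticity. Concretely, take $N=n$, $\cO=\Mnp$, and let $W$ be the stored-energy density. Since $\sD\ne\varnothing$ for the pure-displacement and mixed problems, the admissible class $\AM$ coincides with $\AD$, the variation space $\Var$ is the same, and the energy \eqref{eqn:energy} coincides with \eqref{eqn:TE}. Using the relations $\bS(\bx,\bF):\bH=\DD W(\bx,\bF)[\bH]$ and $\bH:\A(\bx,\bF)[\bK]=\DD^2 W(\bx,\bF)[\bH,\bK]$, the weak Euler--Lagrange equations \eqref{eqn:ES} become the equilibrium equations \eqref{eqn:EE}, and the coercivity assumption $\int_\Omega\grad\bw:\A(\grad\bu\e)[\grad\bw]\,\dd\bx\ge 4k\int_\Omega|\grad\bw|^2\dd\bx$ is exactly the uniform positivity \eqref{eqn:small+SV}$_1$ with $\hat k$ a fixed multiple of $k$. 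Hypothesis~\ref{def:W}(1)--(3) for $W$ is in force.

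The one step requiring thought is the choice of the bounded open set $\sB$ with $\overline{\sB}\subset\cO$ demanded in Theorem~\ref{thm:SVP=LMBMO}; since $\cO=\Mnp$ is \emph{not} all of $\Mn$, one cannot simply use a large ball, and this is precisely why the statement carries two determinant hypotheses. I would take
\[
\sB:=\{\bF\in\Mn:\ |\bF|<\tau,\ \det\bF>\tau^{\mi1}\}.
\]
Because $\tau^{\mi1}>0$ and $\bF\mapsto\det\bF$ is continuous, $\overline{\sB}$ is contained in the compact set $\{\bF:\ |\bF|\le\tau,\ \det\bF\ge\tau^{\mi1}\}\subset\Mnp=\cO$, so $\overline{\sB}\subset\cO$. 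The constraints on $\tau$ are tailored to this set: $\tau>\|\grad\bu\e\|_{\infty,\Om}$ together with $\det\grad\bu\e>\varepsilon>\tau^{\mi1}$ forces $\grad\bu\e(\bx)\in\sB$ for $a.e.~\bx$, which verifies \eqref{eqn:small+SV}$_2$ (and shows $\sB\ne\varnothing$), while $\|\grad\bv\|_{\infty,\Om}<\tau$ and $\det\grad\bv>\tau^{\mi1}$ give $\grad\bv(\bx)\in\sB$ for $a.e.~\bx$, i.e.\ \eqref{eqn:small+e-t}$_1$ is \eqref{eqn:small+B}$_1$; and \eqref{eqn:small+e-t}$_{2,3}$ are literally \eqref{eqn:small+B}$_{2,3}$.

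With these identifications Theorem~\ref{thm:SVP=LMBMO} supplies a $\delta=\delta(\sB)$, hence $\delta=\delta(\tau)$, such that every admissible $\bv$ meeting \eqref{eqn:small+e-t} and $\det\grad\bv>\tau^{\mi1}$ satisfies $\E(\bv)\ge\E(\bu\e)+\hat k\int_\Omega|\grad\bv-\grad\bu\e|^2\dd\bx$; tracking the coefficient through that proof (where the second variation enters with a factor $\tfrac12$ before the cubic remainder is absorbed, starting from the bound $4k$ available here) yields the stated constant $k$. The remaining two assertions follow exactly as in Theorem~\ref{thm:SVP=LMBMO}: if $\bv\not\equiv\bu\e$ then $\grad\bv\ne\grad\bu\e$ on a set of positive measure, since otherwise $\bv=\bu\e+\ba$ on the connected set $\Omega$ and $\ba=\mathbf{0}$ because $\bv=\bd=\bu\e$ on $\sD$; hence $\E(\bv)>\E(\bu\e)$. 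Finally, the concluding part of the proof of Theorem~\ref{thm:SVP=LMBMO} (which invokes Lemma~\ref{lem:extra} and then interchanges the roles of $\bu\e$ and $\bv$) shows that $\bv$ cannot itself be a weak solution of \eqref{eqn:EE}. Thus there is no genuine analytic obstacle: the theorem is a translation of Theorem~\ref{thm:SVP=LMBMO} into the language of Continuum Mechanics, and the only point to get right is the single set $\sB$ that simultaneously captures the ranges of $\grad\bu\e$ and of every competitor while keeping $W$ and its first three derivatives bounded.
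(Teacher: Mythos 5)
Your proposal is correct and follows exactly the route the paper intends: Theorem~\ref{thm:NT-3.0} is stated there as a ``direct implication'' of Theorem~\ref{thm:SVP=LMBMO}, and your choice $\sB=\{\bF:\ |\bF|<\tau,\ \det\bF>\tau^{\mi1}\}$ (with $\overline{\sB}\subset\Mnp$ compactly, so that Hypothesis~\ref{def:W}(3) applies) is precisely the identification needed to make that deduction explicit. Your remark on tracking the coefficient ($4k\to 2k$ from the factor $\tfrac12$ in Taylor's theorem, then $2k\to k$ after absorbing the cubic remainder) is also the right accounting for why the conclusion carries the constant $k$ rather than the $k/2$ a purely black-box application of \eqref{eqn:small+SV} with $8\hat k=4k$ would give.
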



A physical interpretation of hypothesis \eqref{eqn:small+e-t}$_2$
is of interest.
In the remainder of the paper we will show that, in certain situations,
sufficiently small strains or small strain differences imply that
\eqref{eqn:small+e-t}$_2$ is satisfied.


\section{Deformations with Small Strain}\label{sec:Def-SS}

In this section we focus on deformations $\bu$ whose
nonlinear strains $\bE_\bu$
are sufficiently small.  We show, in particular, that uniformly small
strains implies that the deformation gradient is small in $\BMO$.


\subsection{The Positivity of the Second Variation
for Deformations with Small Strain}\label{sec:ES-EM}

We now consider the sign of the second variation for deformations
that have sufficiently small strains.  The next result
shows that a stress-free reference configuration together with the
uniform positivity of the Elasticity Tensor at this reference
configuration yields the uniform positivity of the second variation of
the total energy at any admissible deformation, $\bu\in\AD$, that
either is $C^1$ and has sufficiently small strains, or is sufficiently
close to a single rotation.


\begin{proposition}\label{prop:positive-SV}  Let
$\bF\mapsto W(\bx,\bF)$ be $C^2$, almost uniformly in $\bx$,
on $\Mnp$ and satisfy \eqref{eqn:ICO}.
Suppose that, for $a.e.~\bx\in\Omega$, $\bS(\bx,\bI)=\mathbf{0}$ and
\be\label{eqn:ET-at-I-PD-third}
\bH:\A(\bx,\bI)[\bH]\ge c |\bH+\bH^\rmT|^2
\ee
\n for some constant $c>0$ and every $\bH\in\Mn$.
Then there exists a $\delta\oo\in(0,1)$
such that any admissible deformation $\bu\in\AD$
that satisfies both
\be\label{eqn:rot-dist-small-L-infty}
\bu\in C^1(\overline{\Omega};\R^n) \quad \text{and} \quad
||(\grad\bu)^\rmT\grad\bu-\bI||_{\infty,\Omega} < \delta\oo
\ee
\n or, merely,
\be\label{eqn:I-dist-small-L-infty}
||\grad\bu-\bQ||_{\infty,\Omega} < \delta\oo
\ee
\n for some $\bQ\in\SO(n)$, will also satisfy
\be\label{eqn:pos-SV-ELAS}
\int_\Omega \grad\bw(\bx)
:\A\big(\bx,\grad\bu(\bx)\big)\big[\grad\bw(\bx)\big]\,\dd\bx
\ge
4k\int_\Omega |\grad\bw(\bx)|^2\dd\bx,
\ee
\n for some $k>0$ and all $\bw\in\Var$.
\end{proposition}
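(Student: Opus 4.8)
\textit{Plan.} The plan is to reduce \eqref{eqn:pos-SV-ELAS}, in both cases, to the positivity \eqref{eqn:ET-at-I-PD-third} of the elasticity tensor at $\bI$ together with a Korn inequality for maps vanishing on $\sD$. Since $\sD\ne\varnothing$ is relatively open, the latter is available: there is a constant $c_K=c_K(\Omega,\sD)>0$ with
\[
\int_\Omega|\grad\bz|^2\,\dd\bx\le c_K\int_\Omega\big|\grad\bz+(\grad\bz)^\rmT\big|^2\,\dd\bx\qquad\text{for all }\bz\in\Var .
\]
(Applied with $\bz=\bw$ and combined with \eqref{eqn:ET-at-I-PD-third}, this is the well-known positivity of the second variation \emph{at the reference configuration}.) First I would record a common reduction. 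In case \eqref{eqn:I-dist-small-L-infty} one already has $|\grad\bu(\bx)-\bQ|<\delta\oo$ a.e.; in case \eqref{eqn:rot-dist-small-L-infty}, writing the polar decomposition $\grad\bu(\bx)=\bR(\bx)\bU(\bx)$ and using $|\lambda_k-1|\le|\lambda_k^2-1|$ (Remark~\ref{rem:d<2E}) with Lemma~\ref{lem:alt-dist} gives $|\grad\bu(\bx)-\bR(\bx)|=|\bU(\bx)-\bI|\le\big|(\grad\bu(\bx))^\rmT\grad\bu(\bx)-\bI\big|<\delta\oo$ a.e., and $\bx\mapsto\bR(\bx)$ is continuous on $\overline\Omega$ because $\bu\in C^1$. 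So in both cases $\grad\bu(\bx)$ lies within $\delta\oo$ of a rotation, and for $\delta\oo<\tfrac12$ every matrix in sight lies in the compact set $\mathcal K:=\{\bF\in\Mn:\dist(\bF,\SOn)\le\tfrac12\}\subset\Mnp$, on which $\A$ is bounded and on which, by Definition~\ref{def:uniform} and a standard finite-subcover argument, $\bF\mapsto\A(\bx,\bF)$ is uniformly continuous, uniformly in $\bx$: for each $\eta>0$ there is $\delta\oo=\delta\oo(\eta)$ with $\|\A(\bx,\bF)-\A(\bx,\bF')\|<\eta$ a.e.\ whenever $\bF,\bF'\in\mathcal K$, $|\bF-\bF'|<\delta\oo$.

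Next I would dispose of case \eqref{eqn:I-dist-small-L-infty}. Differentiating \eqref{eqn:ICO} twice in $\bF$ at $\bF=\bI$ gives $\A(\bx,\bQ)[\bQ\bH]=\bQ\,\A(\bx,\bI)[\bH]$, hence $\bL:\A(\bx,\bQ)[\bL]=(\bQ^\rmT\bL):\A(\bx,\bI)[\bQ^\rmT\bL]\ge c\,|\bQ^\rmT\bL+\bL^\rmT\bQ|^2$ for all $\bL\in\Mn$, by \eqref{eqn:ET-at-I-PD-third}. Splitting
\[
\int_\Omega\grad\bw:\A(\bx,\grad\bu)[\grad\bw]\,\dd\bx=\int_\Omega\grad\bw:\A(\bx,\bQ)[\grad\bw]\,\dd\bx+\int_\Omega\grad\bw:\big(\A(\bx,\grad\bu)-\A(\bx,\bQ)\big)[\grad\bw]\,\dd\bx ,
\]
the first integral equals $c\int_\Omega\big|\grad(\bQ^\rmT\bw)+(\grad(\bQ^\rmT\bw))^\rmT\big|^2\,\dd\bx\ge(c/c_K)\int_\Omega|\grad\bw|^2\,\dd\bx$, since $\bQ^\rmT\grad\bw=\grad(\bQ^\rmT\bw)$, $\bQ^\rmT\bw\in\Var$, and $|\bQ^\rmT\grad\bw|=|\grad\bw|$; the second integral is bounded in absolute value by $\eta\int_\Omega|\grad\bw|^2\,\dd\bx$ by the uniform continuity of $\A$. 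Taking $\eta:=c/(2c_K)$ and $\delta\oo:=\delta\oo(\eta)$ yields \eqref{eqn:pos-SV-ELAS} with $k:=c/(8c_K)$.

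For case \eqref{eqn:rot-dist-small-L-infty} I would use the representation of the second variation through $\sigma$ from Lemma~\ref{lem:RC} (whose smoothness hypotheses are inherited from those on $W$ via \eqref{eqn:W=sigma}): with $\bC_\bu:=(\grad\bu)^\rmT\grad\bu$ and the symmetric matrix $\bGamma:=(\grad\bu)^\rmT\grad\bw+(\grad\bw)^\rmT\grad\bu$,
\[
\grad\bw:\A(\bx,\grad\bu)[\grad\bw]=\bGamma:\DD^2\sigma(\bx,\bC_\bu)[\bGamma]+2\,\DD\sigma(\bx,\bC_\bu):\big((\grad\bw)^\rmT\grad\bw\big).
\]
Since $\|\bC_\bu-\bI\|_{\infty,\Omega}<\delta\oo$, continuity of $\DD\sigma$ and $\DD^2\sigma$ (almost uniformly in $\bx$), together with $\DD\sigma(\bx,\bI)=\mathbf 0$ and \eqref{eqn:ET-at-I-PD-alt}, give for $\delta\oo$ small that $\bGamma:\DD^2\sigma(\bx,\bC_\bu)[\bGamma]\ge(c-\eta)|\bGamma|^2$ and $|\DD\sigma(\bx,\bC_\bu)|<\eta$ a.e.; using $\big|(\grad\bw)^\rmT\grad\bw\big|\le|\grad\bw|^2$ this gives
\[
\int_\Omega\grad\bw:\A(\bx,\grad\bu)[\grad\bw]\,\dd\bx\ge(c-\eta)\int_\Omega|\bGamma|^2\,\dd\bx-2\eta\int_\Omega|\grad\bw|^2\,\dd\bx .
\]
It then remains to establish the \emph{nonlinear Korn inequality} $\int_\Omega|\bGamma|^2\,\dd\bx\ge c_2\int_\Omega|\grad\bw|^2\,\dd\bx$ for all $\bw\in\Var$, with $c_2=c_2(\Omega,\sD)>0$ independent of $\bu$ as long as $\|\bC_\bu-\bI\|_{\infty,\Omega}$ is small; granting this and choosing $\eta$ (hence $\delta\oo$) small enough gives \eqref{eqn:pos-SV-ELAS}, e.g.\ with $k:=c\,c_2/8$.

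\textit{Main obstacle.} The crux is precisely this last inequality in case \eqref{eqn:rot-dist-small-L-infty}: it is a genuinely nonlinear Korn inequality around a deformation of uniformly small strain, and unlike case \eqref{eqn:I-dist-small-L-infty} it \emph{cannot} be reduced to classical Korn via a single rotation, since small strain does not force $\grad\bu$ to be close to any one rotation (geometric rigidity fails in $L^\infty$; see Remark~\ref{rem:GR+1}(1)). I expect to obtain it in one of two ways. The cleaner route is to invoke the two-deformation geometric-rigidity / nonlinear-Korn estimates of Ciarlet \& Mardare (cf.\ Remark~\ref{rem:GR+1}(6)): applying such an estimate to $\bu$ and $\bu+t\bw$ and letting $t\to0$ produces $\|\grad\bw\|_{2,\Omega}\le C\|\bGamma\|_{2,\Omega}$, the boundary contribution dropping out because $\bw$ vanishes on $\sD$, with $C$ uniform over the class $\{\bu:\|\bC_\bu-\bI\|_{\infty,\Omega}<\delta\oo\}$. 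Alternatively, since $\bu\in C^1$ with $\det\grad\bu>0$ is a local diffeomorphism, one can cover $\overline\Omega$ by a fixed finite family of balls on which $\grad\bu$ is uniformly close to a constant rotation, run the single-rotation argument of case \eqref{eqn:I-dist-small-L-infty} on each ball through a partition of unity, and patch using connectedness of $\Omega$ and $\bw|_\sD=\mathbf0$ to kill the rigid-motion kernels; this uses only classical Korn but requires care with the patching. Finally, $\delta\oo$ is taken to be the smaller of the thresholds produced by the two cases, and $k$ may be taken uniform.
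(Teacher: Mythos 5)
Your argument for the main case \eqref{eqn:rot-dist-small-L-infty} is essentially the paper's: pass to $\sigma$ via Lemma~\ref{lem:RC}, use $\DD\sigma(\bx,\bI)=\mathbf{0}$ and the almost-uniform continuity of $\DD\sigma$ and $\DD^2\sigma$ to control the two terms, invoke \eqref{eqn:ET-at-I-PD-alt} on the symmetric matrix $\bGamma=(\grad\bu)^\rmT\grad\bw+(\grad\bw)^\rmT\grad\bu$, and reduce everything to the coercivity estimate $\int_\Omega|\bGamma|^2\,\dd\bx\ge c_2\int_\Omega|\grad\bw|^2\,\dd\bx$. You have correctly isolated that estimate as the crux, but you do not prove it, and neither of your two proposed routes is carried out. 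This is precisely where the paper closes the argument by citation: the needed inequality is Korn's inequality with \emph{variable coefficients} (Proposition~\ref{prop:Korn-1} in Appendix~\ref{sec:Korn}, due to Pompe~\cite{Po03}), valid for $\bF\in C(\overline{\Omega};\Mn)$ with $\det\bF\ge\mu>0$ and $\bw=\mathbf{0}$ on $\sD$. This is also exactly why the hypothesis $\bu\in C^1(\overline{\Omega};\R^n)$ appears in \eqref{eqn:rot-dist-small-L-infty}: the inequality is \emph{false} for general $\bF\in L^\infty$ (Neff \& Pompe~\cite{NP14}), so some form of your localization argument (b) — which is in spirit how such results are proved — cannot be made to work without the continuity of $\grad\bu$, and the patching is genuinely the hard part, not a routine step. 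Your route (a) via Ciarlet \& Mardare carries the additional caveat, which the paper itself flags in Remark~\ref{rem:CM-15}, that their constants depend on the mapping $\bu\e$; for the present proposition a $\bu$-dependent constant would still suffice for the conclusion as used later, but your claim of uniformity over the class $\{\bu:\|\bC_\bu-\bI\|_{\infty,\Omega}<\delta\oo\}$ would not follow from their results without further work.

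Two smaller remarks. Your treatment of case \eqref{eqn:I-dist-small-L-infty} via frame-indifference, the single rotation $\bQ$, and classical Korn is complete and correct; the paper simply cites this case as well known (\cite[Theorem~5]{GS79}), so you have supplied a proof the paper omits. Also, your preliminary reduction showing $\dist(\grad\bu(\bx),\SOn)\le|\bC_\bu(\bx)-\bI|<\delta\oo$ and the compact-set/uniform-continuity setup are fine but slightly more than needed: since the relevant argument of $\DD\sigma$ and $\DD^2\sigma$ is $\bC_\bu$, which is within $\delta\oo$ of $\bI$, continuity at the single point $\bI$ (almost uniformly in $\bx$) already suffices, which is all the paper uses.
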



\begin{remark}  (1).~Note that Lemma~\ref{lem:strain=dist-to-rotations}
and \eqref{eqn:rot-dist-small-L-infty}$_2$ imply that the
distance from $\grad\bu$  to the set of rotations is small.
 (2).~The additional smoothness of $\bu$,
\eqref{eqn:rot-dist-small-L-infty}$_1$, is necessitated
by our use of a version of Korn's inequality with nonconstant
coefficients.  See Appendix~\ref{sec:Korn}.
\end{remark}


\begin{proof}[Proof of Proposition~\ref{prop:positive-SV}]  We first note
that the result is well-known when $\grad\bu$ satisfies
\eqref{eqn:I-dist-small-L-infty} (see, e.g., \cite[Theorem~5]{GS79}).
We shall therefore assume that $\bu\in\AD$ satisfies
\eqref{eqn:rot-dist-small-L-infty} for
some $\delta\oo\in(0,1)$ to be determined.
Suppose that $\varepsilon>0$ is an additional small parameter
to be determined.
Then, by hypothesis and Lemma~\ref{lem:RC},
$\bS(\bx,\bI)=\DD\sigma(\bx,\bI)=\mathbf{0}$.  The continuity of
$\DD\sigma$ (almost uniformly in $\bx$) then yields an
$\eta>0$ such that,
for $a.e.~\bx\in\Omega$,
\be\label{eqn:Dsigma-small}
|\DD\sigma(\bx,\bF^\rmT\bF)|< \varepsilon\
\text{ whenever }  \ |\bF^\rmT\bF-\bI|<\eta.
\ee
\n Thus, in view of \eqref{eqn:rot-dist-small-L-infty}$_2$, if we choose
$\delta\oo < \eta$, it follows that, for $a.e.~\bx\in\Omega$,
\be\label{eqn:stress-small-one}
2|\DD\sigma(\bx,\bF^\rmT\bF):(\bH^\rmT\bH)|<2\varepsilon|\bH|^2.
\ee


We next consider
\[
\big(\bH^\rmT\bF+\bF^\rmT\bH\big):
\DD^2\sigma(\bx,\bF^\rmT\bF)[\bH^\rmT\bF+\bF^\rmT\bH].
\]
\n   Define
$\bB:=\bH^\rmT\bF+\bF^\rmT\bH\in\Symn$
and rewrite this quadratic form (in $\bB$) as
\be\label{eqn:split-one}
\begin{aligned}
\bB:\DD^2\sigma(\bx,\bF^\rmT\bF)[\bB] = \bB:\DD^2\sigma(\bx,\bI)[\bB]
+\bB:\big(\DD^2\sigma(\bx,\bF^\rmT\bF)-\DD^2\sigma(\bx,\bI)\big)[\bB].
\end{aligned}
\ee
\n Then, given $\varepsilon>0$, the continuity of $\DD^2\sigma$
(almost uniformly in $\bx$)
yields a $\beta>0$ such that, for $a.e.~\bx\in\Omega$,
\be\label{eqn:D2-sigma-cont-one}
\big|\DD^2\sigma(\bx,\bF^\rmT\bF)-\DD^2\sigma(\bx,\bI)\big|
< \varepsilon\ \text{ whenever }  \ |\bF^\rmT\bF-\bI|<\beta.
\ee
\n In view of \eqref{eqn:rot-dist-small-L-infty}$_2$,
a choice of $\delta\oo < \beta$ yields
\be\label{eqn:diff-small-one}
\big|\bB:\big(\DD^2\sigma(\bx,\bF^\rmT\bF)
-
\DD^2\sigma(\bx,\bI)\big)\big[\bB\big]\big|
\le
\varepsilon|\bB|^2,
\ee
\n for $a.e.~\bx\in\Omega$.
Lastly, in view of \eqref{eqn:ET-at-I-PD-third} and
Lemma~\ref{lem:RC}, the remaining term in
\eqref{eqn:split-one} satisfies
\be\label{eqn:pos-def-one}
\bB:\DD^2\sigma(\bx,\bI)[\bB] \ge c|\bB|^2.
\ee


\n If we let  $\bF=\grad\bu(\bx)$ and $\bH=\grad\bw(\bx)$ in
\eqref{eqn:Dsigma-and-D2sigma}$_2$, integrate over $\Omega$,
and make use of \eqref{eqn:stress-small-one}, \eqref{eqn:split-one},
\eqref{eqn:diff-small-one}, and \eqref{eqn:pos-def-one},
we conclude that
\be\label{eqn:two-bound-A-below-one}
\begin{aligned}
\int_\Omega \grad\bw : \A(\grad\bu)[\grad\bw]\,\dd\bx
\ge
(c-\varepsilon)
&\int_\Omega \big|(\grad\bw)^\rmT\grad\bu
+(\grad\bu)^\rmT\grad\bw\big|^2 \,\dd\bx\\
-2\varepsilon&\int_\Omega |\grad\bw|^2\,\dd\bx.
\end{aligned}
\ee


We now assume that $\bu\in C^1(\overline{\Omega};\R^n)$.
A generalized Korn's inequality, Proposition~\ref{prop:Korn-1},
then yields the existence of
a constant $K>0$ such that
\[
\int_\Omega
\big|(\grad\bw)^\rmT\grad\bu+(\grad\bu)^\rmT\grad\bw\big|^2\,\dd\bx
\ge
K \int_\Omega|\grad\bw|^2\,\dd\bx,
\]
\n which together with
\eqref{eqn:two-bound-A-below-one} gives us
\be\label{eqn:two-bound-A-below-two}
\int_\Omega \grad\bw : \A(\grad\bu)[\grad\bw]\,\dd\bx
\ge
\big[K(c-\varepsilon)-2\varepsilon\big]\int_\Omega |\grad\bw|^2\,\dd\bx.
\ee
\n Finally, we return to
$\varepsilon$ and $\delta\oo$.
Choose $\varepsilon>0$ that satisfies
$\varepsilon<\min\{c,Kc/(K+2)\}$ so that
\eqref{eqn:two-bound-A-below-two}
will yield \eqref{eqn:pos-SV-ELAS}.  Then choose $\delta\oo>0$
so that $\delta\oo<\min\{\eta,\beta,1\}$,
where $\eta$ and $\beta$ are determined by $\varepsilon$ in
\eqref{eqn:Dsigma-small} and \eqref{eqn:D2-sigma-cont-one},
respectively.  That concludes the proof.
\end{proof}


\subsection{Uniqueness of Equilibrium that have
Sufficiently Small Strains}\label{sec:unique-elasticity-2}

We are now ready to apply the results obtained for general integrands
in the Calculus of Variations to elastic deformations with small
strains.

\begin{theorem}\label{thm:unique}  Let $\bF\mapsto W(\bx,\bF)$
be $C^2$, almost uniformly in $\bx$, on $\Mnp$.  Suppose that
$W$ satisfies
\eqref{eqn:ICO} and (1)--(3) of Hypothesis~\ref{def:W}.
Assume, in addition, that, for $a.e.~\bx\in\Omega$,
$\bS(\bx,\bI)=\mathbf{0}$ and
\[
\bH:\A(\bx,\bI)[\bH]\ge c |\bH+\bH^\rmT|^2
\]
\n for some constant $c>0$ and every $\bH\in\Mn$.
Then there exists
a $\delta\in(0,1)$ such that any solution, $\bu\e\in\AD$,
of the equilibrium equations \eqref{eqn:EE}, for either the
pure-displacement problem or the mixed problem,
that satisfies both
\be\label{eqn:dist-to-rot-small-end-1}
\bu\e\in C^1(\overline{\Omega};\R^n) \quad \text{ and } \quad
||(\grad\bu\e)^\rmT\grad\bu\e-\bI||_{\infty,\Omega} < \delta
\ee
\n or, merely,
\be\label{eqn:dist-to-rot-small-end-1-again}
||\grad\bu\e-\bQ||_{\infty,\Omega} < \delta
\ee
\n for some $\bQ\in\SOn$,
\n is the unique minimizer of the energy among
$\bv\in\AD$ that satisfy
\be\label{eqn:dist-to-rot-small-end-2}
||(\grad\bv)^\rmT\grad\bv-\bI||_{\infty,\Omega} < \delta.
\ee
\n Moreover, there are no other equilibrium solutions,
$\buh\e\in\AD$, that satisfy
\eqref{eqn:dist-to-rot-small-end-2} with $\bv=\buh\e$.
\end{theorem}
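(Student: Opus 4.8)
The plan is to deduce the theorem from Theorem~\ref{thm:NT-3.0} by showing that, once $\delta$ is small, both $\bu\e$ and every competitor $\bv\in\AD$ with $\|(\grad\bv)^\rmT\grad\bv-\bI\|_{\infty,\Om}<\delta$ fit the hypotheses of that theorem with a common constant. First I would translate smallness of the strain into uniform closeness to the rotations: by Lemma~\ref{lem:strain=dist-to-rotations}, $\|(\grad\bu)^\rmT\grad\bu-\bI\|_{\infty,\Om}<\delta<1$ forces $\dist(\grad\bu(\bx),\SOn)=O(\delta^{1/2})$ a.e., whence $\|\grad\bu\|_{\infty,\Om}<\sqrt n+O(\delta^{1/2})$ and, by continuity of the determinant (which equals $1$ on $\SOn$), $\det\grad\bu(\bx)>1-O(\delta^{1/2})$ a.e. Thus, fixing $\tau:=2\sqrt n+1$, for $\delta$ sufficiently small both $\bu\e$ and every such $\bv$ satisfy $\|\grad\,\cdot\,\|_{\infty,\Om}<\tau$ and $\det\grad\,\cdot\,>\tau^{\mi1}$; when instead $\bu\e$ is merely $L^\infty$-close to a single rotation, \eqref{eqn:dist-to-rot-small-end-1-again} gives the same bounds at once.

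Next I would obtain the uniform positivity of the second variation at $\bu\e$. The standing assumptions of the theorem --- $\bF\mapsto W(\bx,\bF)$ of class $C^2$ almost uniformly in $\bx$, frame indifference \eqref{eqn:ICO}, a stress-free reference state, and uniform positive definiteness of $\A(\bx,\bI)$ --- are exactly those of Proposition~\ref{prop:positive-SV}. Hence, taking $\delta$ below the $\delta\oo$ of that proposition, any $\bu\e$ satisfying \eqref{eqn:dist-to-rot-small-end-1} or \eqref{eqn:dist-to-rot-small-end-1-again} obeys $\int_\Om\grad\bw:\A(\grad\bu\e)[\grad\bw]\,\dd\bx\ge4k\int_\Om|\grad\bw|^2\,\dd\bx$ for all $\bw\in\Var$ and some $k>0$; it is here that the extra regularity $\bu\e\in C^1(\overline\Om;\R^n)$ is consumed, through the generalized Korn inequality used inside Proposition~\ref{prop:positive-SV}.

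The core of the argument is to show that $\grad\bv-\grad\bu\e$ is small in $\BMO\cap\,L^1$. For the seminorm I would use $\tsn{\grad\bv-\grad\bu\e}_{\BMO(\Om)}\le\tsn{\grad\bv}_{\BMO(\Om)}+\tsn{\grad\bu\e}_{\BMO(\Om)}$ together with the Geometric-Rigidity estimate \eqref{eqn:small-dist-&-small_BMO} and Lemma~\ref{lem:strain=dist-to-rotations}: each term is at most $M\,\|\dist(\grad\,\cdot\,,\SOn)\|_{\infty,\Om}=O(\delta^{1/2})$. For the average of the gradient there are two cases. In the pure-displacement problem it vanishes identically by \eqref{eqn:D-D=0}. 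In the mixed problem, $\bu\e=\bd=\bv$ on the nonempty relatively open set $\sD$, so Proposition~\ref{prop:BC+SR=close-in-L1} (applied with any fixed $p\in(n,\infty)$) bounds $\|\grad\bv-\grad\bu\e\|_{1,\Om}$ by $A^*$ times the sum of the $L^p$-distances of $\grad\bv$ and $\grad\bu\e$ to $\SOn$, and hence $\bigl|\dashint_\Om(\grad\bv-\grad\bu\e)\,\dd\bx\bigr|\le|\Om|^{-1}\|\grad\bv-\grad\bu\e\|_{1,\Om}=O(\delta^{1/2})$. I expect the main obstacle to be the bookkeeping at this stage: one must arrange that the single $\delta$ in the statement simultaneously forces the gradients into the fixed set governed by $\tau$, keeps $\det$ away from $0$, and pushes both the $\BMO$-seminorm and the average of $\grad\bv-\grad\bu\e$ below the threshold $\delta_1=\delta_1(\tau)$ that Theorem~\ref{thm:NT-3.0} requires.

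Finally, with $\delta\in(0,\delta\oo)$ chosen small enough for all of the above, I would invoke Theorem~\ref{thm:NT-3.0} with this $\tau$, the $k$ from the second step, and an $\varepsilon$ with $\tau^{\mi1}<\varepsilon$ and $\det\grad\bu\e>\varepsilon$ a.e. (possible by the first step): every competitor $\bv$ then satisfies \eqref{eqn:small+e-t}, so $\E(\bv)\ge\E(\bu\e)+k\int_\Om|\grad\bv-\grad\bu\e|^2\,\dd\bx$, which shows $\bu\e$ is the unique minimizer of $\E$ in the class \eqref{eqn:dist-to-rot-small-end-2}; and, by the same theorem, no $\bv\not\equiv\bu\e$ in that class can be a weak solution of the equilibrium equations \eqref{eqn:EE}, so there is no second small-strain equilibrium solution. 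The displacement and mixed problems are treated in parallel, differing only in which estimate is used for the average of the gradient.
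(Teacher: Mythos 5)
Your proposal is correct and follows essentially the same route as the paper: Proposition~\ref{prop:positive-SV} for the uniform positivity of the second variation at $\bu\e$, Geometric Rigidity (\eqref{eqn:small-dist-&-small_BMO}) plus the triangle inequality for smallness of $\tsn{\grad\bv-\grad\bu\e}_{\BMO(\Om)}$, and Proposition~\ref{prop:BC+SR=close-in-L1} for the $L^1$/average estimate, before invoking the local-minimality theorem. The only cosmetic difference is that you pass through Theorem~\ref{thm:NT-3.0} (itself a restatement of Theorem~\ref{thm:SVP=LMBMO} for elasticity, with the bounded set $\sB$ described by $\tau$) whereas the paper applies Theorem~\ref{thm:SVP=LMBMO} directly with $\sB=\{\bF:\dist(\bF,\SOn)<\delta<1\}$; both choices of $\sB$ are legitimate.
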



Theorem~\ref{thm:unique} establishes that there is at most
one solution with (sufficiently) small strains for both the
pure-displacement and the mixed problem in Nonlinear Elasticity.
For the pure-displacement problem, essentially the same result
(with a similar proof) was
first established by John~\cite{Jo72}.  A more recent
elementary proof, under different hypotheses, can be
found in \cite{SS18}.

\begin{remark} Theorem~\ref{thm:unique} does not yield the
\emph{existence} of any solutions of the equilibrium
equations that satisfy \eqref{eqn:dist-to-rot-small-end-1}.
However, suppose that the stored-energy density, the boundary, and
the data: $(\bd,\bs,\bb)$
are sufficiently smooth and either $\mathcal{D}=\partial\Omega$
(the displacement problem) or both $\partial\mathcal{S}=\varnothing$
and  $\partial\mathcal{D}=\varnothing$, e.g., a thick spherical shell
with $\mathcal{S}$ and $\mathcal{D}$ the inner and outer boundaries.
Then results
of Valent~\cite{Va88}, which make use of estimates for systems of linear
elliptic equations and the implicit function theorem,
yield the existence of a solution that
satisfies \eqref{eqn:dist-to-rot-small-end-1} whenever $\bs$ and
$\bb$ are sufficiently small and $\bd$ is sufficiently close
to the identity.
\end{remark}

\begin{remark} In Theorem~\ref{thm:unique} it is irrelevant whether or
not the equilibrium solution is injective.
This may engender curious consequences.  For example,
suppose that one can show that a non-injective equilibrium
solution with (sufficiently) small strains exists.  Then
Theorem~\ref{thm:unique} implies, in particular, that there are
\emph{no
injective equilibrium solutions with small strains.}
\end{remark}


\begin{proof}[Proof of Theorem~\ref{thm:unique}]  We shall assume that
$\bu\e$ satisfies \eqref{eqn:dist-to-rot-small-end-1}.  The proof
when $\bu\e$ satisfies \eqref{eqn:dist-to-rot-small-end-1-again}
is similar.   Let
$\bu\e\in\AD$ be a solution of \eqref{eqn:EE}
that satisfies
\eqref{eqn:dist-to-rot-small-end-1} for some $\delta\in(0,1)$ to
be determined.
Then, in view of Proposition~\ref{prop:positive-SV}, there exists
a $\delta\oo\in(0,1)$ and a $k>0$ such that, for all $\bw\in\Var$,
\be\label{eqn:pos-SV-ELAS-two}
\int_\Omega \grad\bw:\A(\grad\bu\e)[\grad\bw]\,\dd\bx
\ge
4k\int_\Omega |\grad\bw|^2\dd\bx,
\ee
\n provided $\delta<\delta\oo$.
Now, let $\bv\in\AD$ satisfy \eqref{eqn:dist-to-rot-small-end-2}
for some $\delta\in(0,\delta\oo)$ to be determined.


Next, fix $p>n$.  Then Proposition~\ref{prop:BC+SR=close-in-L1} together
with \eqref{eqn:strain}, \eqref{eqn:d<2E},
\eqref{eqn:dist-to-rot-small-end-1}$_2$, and
\eqref{eqn:dist-to-rot-small-end-2} yield a constant $A^*>0$ such that
\be\label{eqn:small-in-L1}
||\grad\bu\e-\grad\bv||_{1,\Omega}< 2A^*|\Omega|^{1/p}\delta.
\ee
\n Also, in view of Proposition~\ref{prop:GR} (Geometric Rigidity),
there exists a constant $M>0$ such that
\[
\tsn{\grad\bu\e}_{\BMO(\Omega)} < M\delta,
\qquad
\tsn{\grad\bv}_{\BMO(\Omega)} < M\delta,
\]
\n and hence, by the triangle inequality,
\be\label{eqn:small-in-BMO}
\tsn{\grad\bu\e-\grad\bv}_{\BMO(\Omega)} < 2M\delta.
\ee
\n Finally, if we define
\[
\sB:=\{\bF\in\Mn: \dist(\bF,\SOn) < \delta < 1\}\subset\Mnp,
\]
\n we find that, for almost every $\bx\in\Omega$,
\be\label{eqn:in-B}
\grad\bu\e(\bx)\in\sB, \qquad \grad\bv(\bx)\in\sB.
\ee

We now take note of \eqref{eqn:pos-SV-ELAS-two}, \eqref{eqn:small-in-L1},
\eqref{eqn:small-in-BMO}, and \eqref{eqn:in-B}
and choose
$\delta\in(0,\delta\oo)$ sufficiently small so that
$\bu\e$ and $\bv$ satisfy the hypotheses of Theorem~\ref{thm:SVP=LMBMO}.
We then find that  $\bu\e$ and $\bv$ satisfy the conclusions of that
theorem, i.e.,
\[
\E(\bv)\ge \E(\bu\e)
+k\int_\Omega|\grad\bv-\grad\bu\e|^2\dd\bx;
\]
\n  $\bv\not\equiv\bu\e$ has strictly greater energy than $\bu\e$;
and $\bv\not\equiv\bu\e$ cannot be an equilibrium solution.
\end{proof}

\section{Uniqueness of Equilibrium with Sufficiently Small Strain
Differences; Change of Reference Configuration}\label{sec:CRC}



In this section we extend the uniqueness results obtained in
Section~\ref{sec:unique-elasticity-2}.
In particular, we show that the positivity of the second variation at a
weak solution of the equilibrium equations, $\bu\e$, that is a
diffeomorphism, implies that $\bu\e$ is a strict minimizer of the
energy among those admissible deformations $\bv$ whose right
Cauchy-Green strain tensor $\bC_\bv:=(\grad\bv)^\rmT\grad\bv$
is uniformly and sufficiently close to
$\bC\e:=(\grad\bu\e)^\rmT\grad\bu\e$.
We also show that such a $\bv$ cannot be a weak solution
of the equilibrium equations.  We begin with some additional notations.


\emph{Recall that we consider a body that we identify with the
closure of a
bounded, Lipschitz domain} $\Om\subset \R^n$, $n=2$ or $n=3$,
that it occupies in a fixed reference configuration.
We let $C^0(\overline{\Omega};\R^n)$ denote those maps
$\bu:\overline{\Omega}\to\R^n$ that are bounded and uniformly continuous
on the closure of $\Omega$.  We shall write
$\bu\in C^1(\overline{\Omega};\R^n)$ provided that both $\bu$ and its
classical gradient $\grad\bu$ are bounded and uniformly continuous
on the closure of $\Omega$.  Note that, for each $\bx\in \Omega$,
$\grad\bu(\bx)\in\Mn$ with components
$[\grad\bu]_{ij}= \partial u_i/\partial x_j$.
As in Section~\ref{sec:ES-EM-NLE},
we shall let
\[
\partial \Om = \overline{\sD} \cup \overline{\sS}  \quad
\text{with $\sD$ and $\sS$ relatively open, }\
\sD\cap\sS =\varnothing,
\]
\n and $\sD\ne \varnothing$.  In addition, we suppose that functions
$\bd\in  C^1(\overline{\sD};\R^n)$, $\bb\in L^2(\Om;\R^n)$, and,
if $\sS \ne\varnothing$, $\bs\in L^2(\sS;\R^n)$
are prescribed.  We assume that $\bd$ is one-to-one.


We next define what we mean by a diffeomorphism and we also
recall our definition of
admissible deformations and variations from Section~5.

\begin{definition}  Let $\bu:\overline{\Omega} \to \R^n$ be an injective
mapping with inverse
$\bu^{\mi1}:\bu(\overline{\Omega})\to \overline{\Omega}$.  We
call $\bu$ an (orientation preserving) \emph{diffeomorphism} provided
that  \vspace{-.2cm}
\begin{enumerate}
\item $\bu\in C^1(\overline{\Omega};\R^n)$;
\item $\bu^{\mi1}\in C^1(\bu(\overline{\Omega});\R^n)$; and
\item $\det\grad\bu > 0$ on the compact set $\overline{\Omega}$.
\vspace{-.2cm}
\end{enumerate}
\n Next, recall that
\be\label{eqn:var-and-AD}
\begin{gathered}
\AD:=\{\bu\in W^{1,\infty}(\Om;\R^n):
\det\grad\bu>0~a.e.,
\ \bu=\bd \text{ on } \sD\},\\[2pt]
\Var:=\{\bw\in W^{1,2}(\Om;\R^n):
\bw=\mathbf{0} \text{ on $\sD$}\}.
\end{gathered}
\ee
\end{definition}


The main result of this section is the following theorem.

\begin{theorem}\label{thm:NT-3} Let $W$ satisfy (1)--(3)
of Hypothesis~\ref{def:W}.  Suppose that  \vspace{-.2cm}
\begin{enumerate}
\item[(A)] $\bu\e\in \AD$ is a diffeomorphism;
\item[(B)] $\bu\e$ is a weak solution of the equilibrium
equations; and
\item[(C)] $\bu\e$  satisfies
\[
\int_\Om \grad\bw(\bx)\Td
\A\big(\bx,\grad\bu\e(\bx)\big)\big[\grad\bw(\bx)\big]\,\dd\bx
\ge
4k\int_\Om |\grad\bw(\bx)|^2\dd\bx,
\]
\n for some $k>0$ and all $\bw\in\Var$.
\end{enumerate}
\n Then there exists an $\varepsilon>0$ such that any
$\bv\in \AD$ that satisfies 
\be\label{eqn:dist-small-ref}
0<\big\|(\grad\bv)^\rmT\grad\bv
-(\grad\bu\e)^\rmT\grad\bu\e\big\|_{\infty,\Om}
< \varepsilon
\ee
\n has strictly greater energy than $\bu\e$.  Moreover, there are no
other weak solutions of the equilibrium equations,
$\bv\e\in \AD$, that
satisfy \eqref{eqn:dist-small-ref} with $\bv=\bv\e$.
\end{theorem}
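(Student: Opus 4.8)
The plan is to reduce Theorem~\ref{thm:NT-3} to Theorem~\ref{thm:NT-3.0} by a change of reference configuration, replacing $\Om$ by the deformed body $\Ome:=\bu\e(\Om)$. Since $\bu\e$ is a diffeomorphism, $\bu\e$ and $\bu\e^{-1}$ are Lipschitz on $\overline\Om$ and $\overline\Ome$ respectively, so $\Ome$ is again a bounded Lipschitz domain and $\bu\e$ restricts to a homeomorphism of $\partial\Om$ onto $\partial\Ome$; hence $\widetilde\sD:=\bu\e(\sD)$ and $\widetilde\sS:=\bu\e(\sS)$ are disjoint, relatively open, satisfy $\partial\Ome=\overline{\widetilde\sD}\cup\overline{\widetilde\sS}$, and $\widetilde\sD\ne\varnothing$. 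For $\bv\in\AD$ I would set $\bvh:=\bv\circ\bu\e^{-1}\in W^{1,\infty}(\Ome;\R^n)$, so that $\grad\bvh(\by)=\grad\bv(\bx)\,[\grad\bu\e(\bx)]^{-1}$ at $\by=\bu\e(\bx)$, and the substitution $\by=\bu\e(\bx)$ in \eqref{eqn:TE} would give $\E(\bv)=\widetilde\E(\bvh)$, where $\widetilde\E$ is the energy on $\Ome$ built from the stored-energy density
\[
\widetilde W(\by,\bG):=\frac{W\big(\bu\e^{-1}(\by),\,\bG\,\grad\bu\e(\bu\e^{-1}(\by))\big)}{\det\grad\bu\e(\bu\e^{-1}(\by))},
\]
the body force $\widetilde\bb(\by):=\bb(\bu\e^{-1}(\by))\big/\det\grad\bu\e(\bu\e^{-1}(\by))$, and a traction $\widetilde\bs\in L^2(\widetilde\sS;\R^n)$ obtained from $\bs$ through Nanson's formula. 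In particular $\E(\bu\e)=\widetilde\E(\bi)$, with $\bi$ the identity map of $\Ome$.

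Next I would check that $\widetilde W$, $\Ome$, and $\bi$ satisfy the hypotheses of Theorem~\ref{thm:NT-3.0}. Because $\bG\mapsto\bG\,\grad\bu\e(\bu\e^{-1}(\by))$ is linear and $\grad\bu\e\circ\bu\e^{-1}$ is continuous and $\Mnp$-valued on the compact set $\overline\Ome$, the integrand $\widetilde W$ again satisfies (1)--(3) of Hypothesis~\ref{def:W} on $\overline\Ome\times\Mnp$, and $\widetilde\bb\in L^2(\Ome;\R^n)$. For a variation $\bz$ on $\Ome$ (i.e. $\bz\in W^{1,2}(\Ome;\R^n)$ with $\bz=\mathbf{0}$ on $\widetilde\sD$), putting $\bw:=\bz\circ\bu\e$ — a variation on $\Om$ — the Piola transform $\widetilde\bS(\by,\bI)=\bS(\bx,\grad\bu\e)[\grad\bu\e]^{\rmT}\big/\det\grad\bu\e$ shows that $\delta\widetilde\E(\bi)[\bz]$ equals the integral appearing in \eqref{eqn:EE} evaluated at $\bw$, which vanishes by hypothesis (B); thus $\bi$ is a weak solution of the Euler--Lagrange equations of $\widetilde\E$. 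A parallel computation at the level of quadratic forms gives $\int_\Ome\grad\bz:\widetilde\A(\by,\grad\bi)[\grad\bz]\,\dd\by=\int_\Om\grad\bw:\A(\bx,\grad\bu\e)[\grad\bw]\,\dd\bx$, and combining this with hypothesis (C) and the inequality $\int_\Om|\grad\bw|^2\,\dd\bx\ge c_0\int_\Ome|\grad\bz|^2\,\dd\by$ — valid since $\grad\bu\e$ and $[\grad\bu\e]^{-1}$ are bounded on $\overline\Om$ — furnishes a constant $\widetilde k>0$ with $\int_\Ome\grad\bz:\widetilde\A(\by,\grad\bi)[\grad\bz]\,\dd\by\ge 4\widetilde k\int_\Ome|\grad\bz|^2\,\dd\by$. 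Since $\det\grad\bi\equiv1$, Theorem~\ref{thm:NT-3.0} applies to $\bi$ on $\Ome$ with, say, $\tau:=\sqrt n+1$, producing a $\delta>0$.

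The key point — and the reason the hypothesis is on strain \emph{differences} rather than strains — is that in the new configuration $\bvh$ has \emph{small strain}. Indeed, a direct computation using $\grad\bvh=\grad\bv\,[\grad\bu\e]^{-1}$ gives, at $\by=\bu\e(\bx)$,
\[
(\grad\bvh)^\rmT\grad\bvh-\bI=[\grad\bu\e(\bx)]^{-\rmT}\Big((\grad\bv(\bx))^\rmT\grad\bv(\bx)-(\grad\bu\e(\bx))^\rmT\grad\bu\e(\bx)\Big)[\grad\bu\e(\bx)]^{-1},
\]
so \eqref{eqn:dist-small-ref} yields $\big\|(\grad\bvh)^\rmT\grad\bvh-\bI\big\|_{\infty,\Ome}\le\big\|[\grad\bu\e]^{-1}\big\|_{\infty,\Om}^{2}\,\varepsilon$. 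Hence, for $\varepsilon$ small, Lemma~\ref{lem:strain=dist-to-rotations} makes $\dist(\grad\bvh,\SOn)$ uniformly small, so that $\|\grad\bvh\|_{\infty,\Ome}$ is close to $\sqrt n$ (hence $<\tau$), $\det\grad\bvh$ is close to $1$ (hence $>\tau^{-1}$), and — by Proposition~\ref{prop:GR} (Geometric Rigidity) together with $\grad\bi\equiv\bI$ — $\tsn{\grad\bvh-\grad\bi}_{\BMO(\Ome)}<\delta$. Moreover $\bvh=\bi$ on $\widetilde\sD$, because $\bv=\bd=\bu\e$ on $\sD$; so Proposition~\ref{prop:BC+SR=close-in-L1} bounds $\|\grad\bvh-\grad\bi\|_{1,\Ome}$, and therefore $\big|\dashint_\Ome(\grad\bvh-\grad\bi)\,\dd\by\big|$, by a small multiple of a power of $\varepsilon$, which is $<\delta$ for $\varepsilon$ small. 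Thus $\bvh\in\AD(\Ome)$ satisfies the hypotheses of Theorem~\ref{thm:NT-3.0} with $\bu\e$ replaced by $\bi$.

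It then remains to translate back. Theorem~\ref{thm:NT-3.0} gives $\widetilde\E(\bvh)\ge\widetilde\E(\bi)+\widetilde k\int_\Ome|\grad\bvh-\grad\bi|^2\,\dd\by$ and that $\bvh$ is not a weak solution of the Euler--Lagrange equations of $\widetilde\E$; using $\widetilde\E(\bvh)=\E(\bv)$ and $\widetilde\E(\bi)=\E(\bu\e)$ one concludes $\E(\bv)\ge\E(\bu\e)$, with strict inequality because the hypothesis $0<\|(\grad\bv)^\rmT\grad\bv-(\grad\bu\e)^\rmT\grad\bu\e\|_{\infty,\Om}$ forces $\bv\not\equiv\bu\e$, hence $\bvh\not\equiv\bi$, hence (as $\grad\bvh=\grad\bi$ would, together with $\bvh=\bi$ on $\widetilde\sD\ne\varnothing$, give $\bvh\equiv\bi$) $\int_\Ome|\grad\bvh-\grad\bi|^2\,\dd\by>0$. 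For the uniqueness assertion, if a weak solution $\bv\e\in\AD$ of the equilibrium equations satisfied \eqref{eqn:dist-small-ref}, then — the Piola transform being invertible and carrying weak solutions of \eqref{eqn:EE} to weak solutions of the Euler--Lagrange equations of $\widetilde\E$ — the map $\bv\e\circ\bu\e^{-1}$ would be such a weak solution still satisfying the hypotheses of Theorem~\ref{thm:NT-3.0}, a contradiction. I expect the main obstacle to be the careful bookkeeping of the change of reference configuration: verifying that $\Ome$ is a Lipschitz (equivalently here, a John) domain, identifying the pulled-back traction $\widetilde\bs$ via Nanson's formula, and checking that the equilibrium equations and the second variation transform exactly as claimed — all standard but somewhat tedious manipulations of Continuum Mechanics — after which everything rests on the small-strain analysis already carried out for Theorem~\ref{thm:NT-3.0}.
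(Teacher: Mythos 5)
Your proposal is correct and follows essentially the same strategy as the paper: change reference configuration to $\Ome:=\bu\e(\Om)$, observe that in the new configuration $\bvh=\bv\circ\bu\e^{-1}$ has small strain (via the transformation identity $(\grad\bvh)^\rmT\grad\bvh-\bI=\bF\e^{-\rmT}(\bC_\bv-\bC\e)\bF\e^{-1}$), then invoke Geometric Rigidity plus the shared boundary values to obtain the $\BMO\cap L^1$ smallness needed for the local-minimizer theorem, and pull the conclusion back via the energy identity $\widetilde\E(\bvh)=\E(\bv)$. The only cosmetic difference is that you pass through Theorem~\ref{thm:NT-3.0} rather than applying Theorem~\ref{thm:SVP=LMBMO} directly, and you carry out the transformation of the equilibrium equations and second variation by explicit chain-rule/Piola computations rather than by citing the paper's Proposition~\ref{prop:cov-1-1} and Lemma~\ref{lem:ref=deformed-alt}; these are the same calculations, only packaged differently.
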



\begin{remark}\label{rem:CM-15}  Our proof of Theorem~\ref{thm:NT-3}
requires that we show that all of the hypotheses of
Theorem~\ref{thm:SVP=LMBMO} are satisfied.  A direct application of
Theorem~\ref{thm:SVP=LMBMO} would necessitate us to make use of
\eqref{eqn:dist-small-ref} to demonstrate that $\bu\e$ and $\bv$
satisfy \eqref{eqn:small+B}$_{2,3}$.  In this regard, Ciarlet \&
Mardare~\cite{CM15} have obtained extensions of the Geometric-Rigidity
results of \cite{FJM02} and \cite{CS06} (Proposition~\ref{prop:GR} in
this manuscript) that include a second mapping
$\bu\e\in C^1(\Omc;\R^n)$ with $\det\grad\bu\e>0$ on $\Omc$,
but which need not be injective. Their results imply that there
exists a constant $K=K(p,\bu\e,\Om)$ such that any
$\bv\in W^{1,p}(\Om;\R^n)$, $2\le p<\infty$, that satisfies
$\det\grad\bv>0~a.e.$ and $\bv=\bu\e$ on $\sD$ will also satisfy
\be\label{eqn:GR-CM}
\|\grad\bv-\grad\bu\e\|^2_{p,\Om} \le
K\big\|(\grad\bv)^\rmT\grad\bv
-(\grad\bu\e)^\rmT\grad\bu\e\big\|_{p/2,\Om}.
\ee
\n This result together with \eqref{eqn:dist-small-ref} yields the
integral estimate \eqref{eqn:small+B}$_3$.  However,
Theorem~\ref{thm:SVP=LMBMO} also requires the $\BMO$-estimate
\eqref{eqn:small+B}$_2$. Unfortunately, a $\BMO$-estimate such as
\eqref{eqn:small-dist-&-small_BMO} does not follow from
\eqref{eqn:GR-CM} due to the dependence of the constant $K$ upon the
mapping $\bu\e$.  To obtain \eqref{eqn:small-dist-&-small_BMO}
from \eqref{eqn:GR} one must make use of the fact that the constant
$C$ in \eqref{eqn:GR} is the same for all cubes contained in the region.
\end{remark}

\begin{remark}\label{rem:final-last} At the end of the introduction we
noted that it would be of interest to prove some of our results, e.g.,
Theorem~\ref{thm:NT-3},
without the assumption that $\bu\e$ is one-to-one on $\overline{\Omega}$.
This is of particular interest when the restriction of $\bu\e$ to
$\sS$ is not one-to-one and the deformed body then exhibits
self-contact (see, e.g., Ciarlet~\cite[Section~5.6]{Ci1988}). The main
difficulty is that the boundary of $\bu\e(\Om)$ may then fail
to be Lipschitz since the deformed region may be on both ``sides''
of its boundary. Here one might want to attempt to follow the approach
in \cite{CM15} that partitions $\Om$ into
subdomains upon which $\bu\e$ is injective.
We also note that much of our
proof is valid if $\bu\e$ is bi-Lipschitz, rather than a
diffeomorphism.  However, once again, $\bu\e(\Om)$ may then fail to
be Lipschitz.
See the counterexample in \cite[Section~1.2]{Gr85}.
\end{remark}


Our proof of Theorem~\ref{thm:NT-3} involves a change in reference
configuration.\footnote{A change in reference configuration
is a standard procedure in Continuum Mechanics. See, e.g.,
Ciarlet~\cite[Chapter~1]{Ci1988}.} This change of variables will show
that our assumption that two strain tensors are close to each
other yields
a new deformation whose gradient is close to the set of rotations.
We shall then make use of Geometric Rigidity and
Theorem~\ref{thm:SVP=LMBMO}.  We postpone the proof of
Theorem~\ref{thm:NT-3} to the end of this section.

\subsection{Change of Reference Configuration}\label{subsec:CRC}

In this subsection we present the required change of variables
that makes the deformed configuration into a new reference
configuration.  Those readers already familiar with this
procedure may prefer to skip to Section~\ref{sec:Proof}.

\subsubsection{The Body and its Deformed Image}\label{sec:body}
We first recall some properties of domains and their image
under injective mappings.
Let $U\subset \R^n$, $n\ge2$,
be a bounded domain.  Suppose that $\bu\in C^0(U;\R^n)$
is injective.  Then standard results in topology and degree theory
(see, e.g., \cite[Theorem~3.30]{FG95}) imply that $\bu(U)$ is also
a bounded domain.  Since $\Omega\subset \R^n$, $n\ge2$, denotes
a bounded Lipschitz domain, when $\bu\in C^0(\overline{\Omega};\R^n)$
is injective it then follows that
$\bu(\partial\Omega)=\partial\bu(\Omega)$.  The next result is
well known.  We sketch
the proof for the interested reader.


\begin{proposition}\label{prop:compositions}  Suppose that
$\bu\in C^1(\overline{\Omega};\R^n)$ is a diffeomorphism.
Then $\bu(\Omega)$
is a bounded Lipschitz domain; $\bu$ and $\bu^{\mi1}$ satisfy
\be\label{eqn:der-inv}
\big[\grad_\bx \bu(\bx)\big]^{\mi1}
=
\grad_\by \bu^{\mi1}(\by)\
\text{ with $\by=\bu(\bx)$.}
\ee
\n Moreover, if
$\bzh\in W^{1,p}(\bu(\Omega);\R^n)$ and
$\bw\in W^{1,p}(\Omega;\R^n)$, $p\in[1,\infty]$, then
$\bzh\circ\bu\in W^{1,p}(\Omega;\R^n)$,
$\bw\circ\bu^{\mi1}\in W^{1,p}(\bu(\Omega);\R^n)$, and
\be
\begin{aligned}\label{eqn:chain-rule-1}
\grad_\bx (\bzh\circ\bu)(\bx)
&=
\big[\grad_\by \bzh\big(\bu(\bx)\big)\big]
\grad\bu(\bx)\
\text{ for $a.e.~\bx\in\Omega$},\\[2pt]
\grad_\by (\bw\circ\bu^{\mi1})(\by)
&=
[\grad_\bx \bw\big(\bu^{\mi1}(\by)\big)\big]
\grad \bu^{\mi1}(\by)
\
\text{ for $a.e.~\by\in\bu(\Omega)$}.
\end{aligned}
\ee
\end{proposition}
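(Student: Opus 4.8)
The plan is to handle the three assertions separately, with the bulk of the work going into showing that $\bu(\Omega)$ is a bounded Lipschitz domain; the other two will then follow from the classical chain rule together with the change-of-variables formula. Recall it has already been noted that $\bu(\Omega)$ is a bounded domain and that $\bu(\partial\Omega)=\partial\bu(\Omega)$, so only the Lipschitz character of $\partial\bu(\Omega)$ needs proof. I would fix $\bx_0\in\partial\Omega$, set $\by_0:=\bu(\bx_0)$, and use the Lipschitz character of $\partial\Omega$ to obtain, after a rotation, a ball about $\bx_0$ in which $\Omega$ is the region lying above the graph of a Lipschitz function; in particular $\Omega$ and $\R^n\setminus\Omc$ satisfy uniform interior and exterior cone conditions near $\bx_0$. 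Since $\bu$ is $C^1$ up to the boundary with $\grad\bu(\bx_0)$ invertible, the Taylor expansion $\bu(\bx_0+\bv)=\by_0+\grad\bu(\bx_0)\bv+o(|\bv|)$, uniform in $\bv/|\bv|$, shows that $\bu$ maps each of those cones at $\bx_0$ onto a set containing an open cone of fixed opening at $\by_0$ (a linear isomorphism carries an open cone onto an open cone, and the $o(|\bv|)$ error is harmless for $\bv$ small); hence $\bu(\Omega)$ satisfies a uniform two-sided cone condition near $\by_0$. Covering the compact set $\partial\bu(\Omega)$ by finitely many such neighborhoods and invoking the cone-condition characterization of bounded Lipschitz domains then finishes this step. (Equivalently, one may straighten the boundary by $\grad\bu(\bx_0)^{\mi1}$ and reduce to the elementary fact that a linear isomorphism maps the region above a Lipschitz graph to the region above a Lipschitz graph.)

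The identity \eqref{eqn:der-inv} is immediate: differentiating $\bu^{\mi1}\circ\bu=\mathrm{id}$ by the classical chain rule---valid because both maps are $C^1$ up to the boundary---gives $[\grad_\by\bu^{\mi1}(\bu(\bx))]\,\grad\bu(\bx)=\bI$, and both factors are square. For the Sobolev statements and \eqref{eqn:chain-rule-1} I would argue by approximation; consider $\bw\mapsto\bw\circ\bu^{\mi1}$, the map $\bzh\mapsto\bzh\circ\bu$ being handled identically after interchanging $\bu$ and $\bu^{\mi1}$ and using that $\bu(\Omega)$ is Lipschitz (so that $C^\infty(\overline{\bu(\Omega)})$ is dense in $W^{1,p}(\bu(\Omega);\R^n)$). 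For $p<\infty$, take $\bw_\ell\in C^\infty(\Omc;\R^n)$ with $\bw_\ell\to\bw$ in $W^{1,p}(\Omega;\R^n)$, available since $\Omega$ is Lipschitz. The classical chain rule gives $\bw_\ell\circ\bu^{\mi1}\in C^1$ with $\grad_\by(\bw_\ell\circ\bu^{\mi1})(\by)=[\grad_\bx\bw_\ell(\bu^{\mi1}(\by))]\,\grad\bu^{\mi1}(\by)$, and the change-of-variables formula together with the bounds $0<\inf_{\Omc}|\det\grad\bu|\le\sup_{\Omc}|\det\grad\bu|<\infty$ and $\sup_{\bu(\Omc)}|\grad\bu^{\mi1}|<\infty$ (finite and positive since $\bu$, $\grad\bu$, $\det\grad\bu$ and $\grad\bu^{\mi1}$ are continuous on the relevant compact sets, with $\det\grad\bu>0$ there) shows that $\{\bw_\ell\circ\bu^{\mi1}\}$ is Cauchy in $W^{1,p}(\bu(\Omega);\R^n)$. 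Its limit agrees a.e.\ with $\bw\circ\bu^{\mi1}$ (pass to an a.e.\ convergent subsequence), so $\bw\circ\bu^{\mi1}\in W^{1,p}(\bu(\Omega);\R^n)$, and passing to the limit in the chain-rule identity yields \eqref{eqn:chain-rule-1}. For $p=\infty$ everything in sight is Lipschitz, the composition of a Lipschitz map with the bi-Lipschitz map $\bu^{\mi1}$ is Lipschitz (hence in $W^{1,\infty}$), and the chain rule holds a.e.\ by Rademacher's theorem.

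The step I expect to be the main obstacle is the verification that $\bu(\Omega)$ is Lipschitz: bi-Lipschitz images of Lipschitz domains need not be Lipschitz when $n\ge3$, so the argument must genuinely exploit the $C^1$ regularity of $\bu$ up to the boundary---through the linearization $\grad\bu(\bx_0)$, or equivalently a cone condition---rather than mere bi-Lipschitz bounds. Everything else is routine bookkeeping in Sobolev spaces.
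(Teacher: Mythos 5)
Your overall architecture is sound, and your treatment of \eqref{eqn:der-inv} and of the Sobolev chain rule \eqref{eqn:chain-rule-1} (smooth approximation plus change of variables for $p<\infty$, Rademacher for $p=\infty$) is an acceptable, more self-contained substitute for the paper's citation of a chain-rule theorem for compositions with diffeomorphisms; the only point to add there is that for $p=\infty$ the pointwise chain rule at $a.e.~\by$ also uses that the $C^1$ map $\bu^{\mi1}$ carries the null set where $\bw$ fails to be differentiable to a null set. The genuine gap is in the step you yourself flag as the main obstacle, and it is exactly the ingredient the paper supplies with the Whitney extension theorem: as written, your cone argument uses $\bu$ on a full neighborhood of $\bx_0$ in $\R^n$, but $\bu$ is only defined on $\Omc$. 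The exterior cone at $\bx_0$ lies in $\R^n\setminus\Omc$, so ``$\bu$ maps the exterior cone to a set containing a cone'' is not yet meaningful; and the assertion that $\bu$ maps the interior cone \emph{onto} a set containing an open cone at $\by_0$ is a local surjectivity claim that does not follow from the Taylor expansion alone (the expansion controls where image points land, not which points are attained). Both require first extending $\bu$ to a $C^1$ map on a ball $B(\bx_0,r)$ and invoking the inverse function theorem to make the extension a diffeomorphism there --- precisely the paper's route, after which the local Lipschitz invariance is delegated to Hofmann--Mitrea--Taylor \cite{HMT07} rather than re-proved by cones.

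Second, the criterion you invoke at the end must be the \emph{uniform} cone property in Grisvard's sense: one fixed truncated cone $C$ with $\by+C\subset\bu(\Omega)$ for \emph{every} $\by$ in a full neighborhood of $\by_0$ intersected with $\overline{\bu(\Omega)}$. A two-sided cone at each boundary point with uniformly bounded opening but freely rotating axis does not characterize Lipschitz domains (spiral-type counterexamples). Your argument only produces the cone at the single point $\by_0$, from the expansion based at $\bx_0$. The fix is available --- run the expansion at every $\bx$ near $\bx_0$ and use the uniform continuity of $\grad\bu$ on the compact set $\Omc$ to extract a common subcone with axis $\grad\bu(\bx_0)\bee$, where $\bee$ is the cone axis for $\Omega$ at $\bx_0$ --- but this uniformity in the base point is the crux of the step and must be carried out. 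With the extension and the uniformity supplied, your cone route becomes a legitimate, more elementary alternative to the paper's appeal to \cite{HMT07}; without them it is not yet a proof.
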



\begin{remark}\label{rem:a.e.}  We note that the change of variables
formula also shows that
diffeomorphisms map sets of measure zero to sets of
measure zero, e.g., if $\det\grad\bv(\bx)>0$ for $a.e.~\bx\in\Om$
then $\det\grad(\bv\circ\bu^{\mi1})(\by)>0$ for $a.e.~\by\in\bu(\Om)$.
\end{remark}

\begin{proof}[Sketch of the proof of
Proposition~\ref{prop:compositions}]
The set $\bu(\Omc)$ is compact and hence bounded.
Equation~\eqref{eqn:der-inv} follows from the chain rule for
diffeomorphisms.  We next show
that $\bu(\Omega)$ is a Lipschitz domain.  We note that a
result of Whitney~\cite{Wh34} implies that the Whitney extension theorem
(see, e.g., \cite[Section~6.5]{EG92})
applies to Lipschitz domains and hence that $\bu$ has a $C^1$ extension
to $\R^n$.   Fix a point $\bx\oo\in\partial\Omega$.
Then, since $\det\grad\bu(\bx\oo)>0$, $\grad\bu(\bx\oo)$ is invertible.
The inverse function theorem states that (the extension of)
$\bu$ is a diffeomorphism on $B(\bx\oo,r)$ for some $r>0$.
A result of Hofmann, Mitrea, \& Taylor~\cite[Section~4.1]{HMT07}
then shows that $\bu(\Omega)$ is Lipschitz at the point $\bu(\bx\oo)$.
Thus, $\bu(\Omega)$ is a bounded Lipschitz domain.
Finally, we note that, for $1\le p\le\infty$,
\eqref{eqn:chain-rule-1}$_{1,2}$ are each a consequence of
the chain rule
for the composition of a Sobolev function with a diffeomorphism
(see, e.g., \cite[Section~4.26]{Al16}).
\end{proof}


\subsubsection{Body and Surface Forces, the Energy, the Stress, and
the Elasticity Tensor}

We now consider $\bd$, $\bs$, the stored-energy density
$W$ and its first and second derivatives, the Piola-Kirchhoff
stress $\bS$ and the Elasticity Tensor $\A$.  We show how each
transforms from the reference configuration
$\Omc$ to the deformed configuration $\bu(\Omc)$.

\begin{definition}  Given a stored-energy density
$W:\Omc\times\Mnp\to[0,\infty)$
and a diffeomorphism $\bu\in C^1(\Omc;\R^n)$, we define
$\Wu:\bu(\Omc)\times\Mnp\to[0,\infty)$,
\emph{the stored-energy density
with respect to the deformed configuration} $\bu(\Omc)$,
by
\be\label{eqn:chain-0-1}
\Wu(\by,\bG):=W\big(\bx,\bG\bF\big)(\det\bF)^{\mi1},
\ee
\n where $\by=\bu(\bx)$ and $\bF=\bF(\bx):=\grad\bu(\bx)$.
Given a body-force field $\bb\in L^2(\Om;\R^n)$ and, if
$\sS \ne\varnothing$, a surface-traction
field $\bs\in L^2(\sS;\R^n)$ we define
$\bbu:\bu(\Om)\to\R^n$ and $\bsu:\bu(\sS)\to\R^n$,
the body force and surface tractions in the deformed
configuration, by, for $a.e.~\by\in \bu(\Om)$,
\[
\bbu(\by):= \bb(\bx)(\det\bF)^{\mi1},
\qquad
\bsu(\by):=\bs(\bx)|\bF^{\mi\rmT}\bn(\bx)|^{\mi1}(\det\bF)^{\mi1},
\]
\n for $\sH^{n-1}$-$a.e.~\by\in\bu(\sS)$, where $\bn(\bx)$ denotes
the outward unit normal to $\Om$ (which exists at
$\sH^{n-1}$-$a.e.~\bx\in\partial\Om$, since $\partial\Om$
is Lipschitz.)
\end{definition}


The next result is a simple consequence of
the standard chain rule for $C^1$ functions.
\begin{lemma}\label{lem:chain-1-1}
Let $\bu\in C^1(\Omc;\R^n)$ be a diffeomorphism.  Suppose that
$W:\Omc\times\Mnp\to[0,\infty)$ is such that $W$ satisfies (1)--(3)
of Hypothesis~\ref{def:W}.  Then  $W_u$, defined by
\eqref{eqn:chain-0-1}, also satisfies (1)--(3) of
Hypothesis~\ref{def:W}.  Moreover, for $a.e.~\bx\in \Om$,
every $\bG\in\Mnp$, and every $\bH\in\Mn$,
\be\label{eqn:chain-1-1}
\begin{aligned}
{\Su}(\bu(\bx),\bG)\Td\bH
&:= \Big[\frac{\partial}{\partial\bG}\Wu(\bu(\bx),\bG)\Big]\Td\bH =
\bS(\bx,\bG\bF)\Td[\bH\bF](\det\bF)^{\mi1},\\[2pt]
\bH\Td{\Au}(\bu(\bx),\bG)[\bH] &:=
\frac{\partial}{\partial\bG}
\Big({\Su}\big(\bu(\bx),\bG\big)\Td\bH\Big)[\bH]
= [\bH\bF]\Td\A(\bx,\bG\bF)[\bH\bF](\det\bF)^{\mi1},
\end{aligned}
\ee
\n where $\bF=\bF(\bx):=\grad\bu(\bx)$.
\end{lemma}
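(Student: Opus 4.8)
The plan is to split the claim into the inheritance of properties (1)--(3) of Hypothesis~\ref{def:W} by $\Wu$ and the two differentiation identities~\eqref{eqn:chain-1-1}. By Proposition~\ref{prop:compositions}, $\bu(\Om)$ is a bounded Lipschitz domain, so it makes sense to ask whether $\Wu$ satisfies Hypothesis~\ref{def:W}; and nearly all of the work rests on one observation: since $\bu$ is an orientation-preserving diffeomorphism, $\bF(\bx):=\grad\bu(\bx)$ is continuous and \emph{everywhere} invertible on the compact set $\Omc$, with $\det\bF(\bx)>0$ there; hence for each $\bx$ the linear map $\bG\mapsto\bG\bF(\bx)$ is an automorphism of $\Mn$ that restricts to a bijection of $\Mnp$ onto itself (since $\det(\bG\bF)=\det\bG\,\det\bF$), and $\bx\mapsto(\det\bF(\bx))^{\mi1}$ is continuous, positive, and bounded on $\Omc$. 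Because this inner map is linear, the chain rule produces no lower-order terms: for fixed $\by\in\bu(\Omc)$, writing $\bx=\bu^{\mi1}(\by)$ and $\bF=\bF(\bx)$, the function $\bG\mapsto\Wu(\by,\bG)=W(\bx,\bG\bF)(\det\bF)^{\mi1}$ is $C^3$ on $\Mnp$ and
\[
\DD^k\Wu(\by,\bG)[\bH_1,\ldots,\bH_k]=\DD^kW(\bx,\bG\bF)[\bH_1\bF,\ldots,\bH_k\bF](\det\bF)^{\mi1},\qquad k=0,1,2,3.
\]
This gives property (1) for $\Wu$, the $\bx$-null set on which $W(\bx,\cdot)$ might fail to be $C^3$ pulling back under $\bu^{\mi1}$ to a $\by$-null set by the change-of-variables formula (cf.\ Remark~\ref{rem:a.e.}).

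For property (2), each $\DD^kW$ is a Carath\'eodory function on $\Omega\times\Mnp$ (measurable by Hypothesis~\ref{def:W}(2), continuous in the matrix variable for a.e.\ $\bx$ by (1)); writing $\bx=\bu^{\mi1}(\by)$ and $\bF=\grad\bu(\bx)$, which depend continuously on $\by\in\bu(\Omc)$, and using that $\bu^{\mi1}$ is a $C^1$ diffeomorphism (hence bi-measurable), the composition $(\by,\bG)\mapsto\DD^kW(\bx,\bG\bF)(\det\bF)^{\mi1}$ is jointly (Lebesgue) measurable by the standard composition properties of Carath\'eodory functions; by the displayed derivative formula this is exactly $\DD^k\Wu$. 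For property (3), given a compact $K\subset\Mnp$, the set $K':=\{\bG\,\grad\bu(\bx):\bG\in K,\ \bx\in\Omc\}$ is a compact subset of $\Mnp$ (a continuous image of $K\times\Omc$, with $\det(\bG\,\grad\bu(\bx))>0$), so Hypothesis~\ref{def:W}(3) for $W$ bounds each $\DD^jW$ on $\Omc\times K'$; together with the boundedness of $\grad\bu$ and $(\det\grad\bu)^{\mi1}$ on the compact $\Omc$, the derivative formula then shows $\DD^k\Wu$ is bounded on $\bu(\Omc)\times K$.

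Finally, the identities~\eqref{eqn:chain-1-1} are the cases $k=1,2$ of the displayed formula together with the conventions $\bS(\bx,\bF')\Td\bH=\DD W(\bx,\bF')[\bH]$ and $\bH\Td\A(\bx,\bF')[\bK]=\DD^2W(\bx,\bF')[\bH,\bK]$: the case $k=1$ gives $\Su(\bu(\bx),\bG)\Td\bH=\DD W(\bx,\bG\bF)[\bH\bF](\det\bF)^{\mi1}=\bS(\bx,\bG\bF)\Td[\bH\bF](\det\bF)^{\mi1}$, which is~\eqref{eqn:chain-1-1}$_1$; differentiating this once more in $\bG$ in the direction $\bH$ (equivalently, the case $k=2$) and using $\A=\partial\bS/\partial\bF=\partial^2W/\partial\bF^2$ gives $\bH\Td\Au(\bu(\bx),\bG)[\bH]=[\bH\bF]\Td\A(\bx,\bG\bF)[\bH\bF](\det\bF)^{\mi1}$, which is~\eqref{eqn:chain-1-1}$_2$. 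I do not anticipate a genuine obstacle: the points that need care are (i) that $\bG\bF$ stays inside $\Mnp$, which is exactly where orientation-preservation enters, (ii) that the $W$-exceptional null set pulls back to a null set under the diffeomorphism, and (iii) the joint measurability of a Carath\'eodory function composed with the continuous change of variables; all three are harmless because $\Omc$ is compact and $\det\grad\bu$ is continuous and strictly positive on it.
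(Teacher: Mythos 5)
Your proof is correct and follows exactly the route the paper indicates: the paper offers no written proof, merely stating that Lemma~\ref{lem:chain-1-1} is ``a simple consequence of the standard chain rule for $C^1$ functions,'' and your argument is a careful fleshing out of that sentence. You correctly isolate the crux — that $\bG\mapsto\bG\bF(\bx)$ is a \emph{linear} automorphism of $\Mn$ preserving $\Mnp$, so the chain rule produces no lower-order terms, yielding $\DD^k\Wu(\by,\bG)[\bH_1,\ldots,\bH_k]=\DD^kW(\bx,\bG\bF)[\bH_1\bF,\ldots,\bH_k\bF](\det\bF)^{\mi1}$ — and you attend properly to the three points that could go wrong (invariance of $\Mnp$ under right multiplication by $\bF$, pull-back of the exceptional null set under the diffeomorphism, and joint measurability after composition), all of which are harmless because $\grad\bu$ and $(\det\grad\bu)^{\mi1}$ are continuous and bounded on the compact set $\Omc$.
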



If we combine  Proposition~\ref{prop:compositions},
Lemma~\ref{lem:chain-1-1},  and the change of variables
formula for injective Lipschitz mappings we conclude the following.
\begin{proposition}\label{prop:cov-1-1}  Let
$\bu\in C^1(\Omc;\R^n)$ be a diffeomorphism.
Suppose that $W$ satisfies (1)--(3) of Hypothesis~\ref{def:W}.
Assume further that $\bvh\in W^{1,\infty}(\bu(\Om);\R^n)$
and $\bwh\in W^{1,2}(\bu(\Om);\R^n)$.
Define
$\bv:=\bvh\circ\bu:\Om\to\R^n$ and
$\bw:=\bwh\circ\bu:\Om\to\R^n$.  Then
$\bv\in W^{1,\infty}(\Om;\R^n)$,  $\bw\in W^{1,2}(\Om;\R^n)$,
and
\be\label{eqn:cov-1-1}
\begin{aligned}
\int_{\Om} W\big(\bx,\grad\bv(\bx)\big) \,\dd\bx
&=
\int_{\bu(\Om)} \Wu\big(\by,\grad\bvh(\by)\big) \,\dd\by,\\
\int_{\Om} \bS\big(\bx,\grad\bv(\bx)\big)\Td\grad\bw(\bx) \,\dd\bx
&=
\int_{\bu(\Om)}
{\Su}\big(\by,\grad\bvh(\by)\big)\Td \grad \bwh(\by) \,\dd\by,\\
\int_{\Om}
\grad\bw(\bx)\Td
\A
\big(\bx,\grad\bv(\bx)\big)[\grad\bw(\bx)] \,\dd\bx
&=
\int_{\bu(\Om)}
\grad \bwh(\by)\Td{\Au}\big(\by,\grad\bvh(\by)\big)
[\grad \bwh(\by)]\,\dd\by,   \\
\int_{\Om}\bb(\bx)\cdot\bw(\bx)\,\dd\bx
&=
\int_{\bu(\Om)}\bbu(\by)\cdot\bwh(\by)\,\dd\by,\\
\int_{\sS}\bs(\bx)\cdot\bw(\bx)\,\dd\sH^{n-1}_\bx
&=
\int_{\bu(\sS)}\bsu(\by)\cdot\bwh(\by)\,\dd\sH^{n-1}_\by.
\end{aligned}
\ee
\end{proposition}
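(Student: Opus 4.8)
The statement is a change-of-variables bookkeeping built on three ingredients already at hand: the chain rule for a Sobolev map composed with a $C^1$ diffeomorphism (Proposition~\ref{prop:compositions}), the pointwise identities of Lemma~\ref{lem:chain-1-1} relating $(W,\bS,\A)$ to $(\Wu,\Su,\Au)$, and the change-of-variables formula for injective Lipschitz maps applied both to Lebesgue measure on $\Om$ and to $(n-1)$-dimensional Hausdorff measure on $\partial\Om$. The plan is to establish the regularity of the compositions, then dispatch the four volume identities by a single calculation repeated four times, and finally handle the surface identity, which is the only genuinely delicate point.

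First I would invoke Proposition~\ref{prop:compositions} with $\bzh=\bvh$ and with $\bzh=\bwh$ to conclude that $\bv=\bvh\circ\bu\in W^{1,\infty}(\Om;\R^n)$ and $\bw=\bwh\circ\bu\in W^{1,2}(\Om;\R^n)$, together with the chain-rule identities $\grad\bv(\bx)=\grad\bvh(\bu(\bx))\,\grad\bu(\bx)$ and $\grad\bw(\bx)=\grad\bwh(\bu(\bx))\,\grad\bu(\bx)$ for a.e.\ $\bx\in\Om$; that $\det\grad\bv>0$ a.e.\ is Remark~\ref{rem:a.e.}. Writing $\bF=\bF(\bx):=\grad\bu(\bx)$ and recalling $\det\bF>0$ on $\Omc$, the change-of-variables formula for the injective Lipschitz map $\bu$ reads $\int_{\bu(\Om)}g(\by)\,\dd\by=\int_\Om g(\bu(\bx))\,\det\bF(\bx)\,\dd\bx$ for every $g\in L^1(\bu(\Om))$ (the integrands appearing on the right-hand sides of \eqref{eqn:cov-1-1} are all in $L^1$ by Hypothesis~\ref{def:W}(3), since $\grad\bvh\in L^\infty$ forces $\grad\bv$ into a compact subset of $\cO$, and $\grad\bw,\grad\bwh\in L^2$).

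Next I would apply that formula with $g$ equal, successively, to the four integrands on the right of \eqref{eqn:cov-1-1}$_{1,2,3,4}$. For \eqref{eqn:cov-1-1}$_1$, with $g(\by)=\Wu(\by,\grad\bvh(\by))$, the definition \eqref{eqn:chain-0-1} and the chain rule give $g(\bu(\bx))\,\det\bF=W(\bx,\grad\bvh(\bu(\bx))\bF)(\det\bF)^{\mi1}\det\bF=W(\bx,\grad\bv(\bx))$, which integrates to the claim. Identities \eqref{eqn:cov-1-1}$_{2,3}$ follow in exactly the same manner from \eqref{eqn:chain-1-1}$_{1,2}$ of Lemma~\ref{lem:chain-1-1} applied with $\bG=\grad\bvh(\bu(\bx))$ and $\bH=\grad\bwh(\bu(\bx))$, so that $\bG\bF=\grad\bv(\bx)$ and $\bH\bF=\grad\bw(\bx)$; the factor $(\det\bF)^{\mi1}$ built into \eqref{eqn:chain-1-1} cancels the Jacobian $\det\bF$ produced by the change of variables. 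Identity \eqref{eqn:cov-1-1}$_4$ is the same computation with the definition $\bbu(\by)=\bb(\bx)(\det\bF)^{\mi1}$ and $\bwh(\bu(\bx))=\bw(\bx)$.

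Finally, the surface identity \eqref{eqn:cov-1-1}$_5$. Here I would use the change-of-variables formula for the restriction of the $C^1$ diffeomorphism $\bu$ (extended to a $C^1$ map on $\R^n$ via Whitney, as in the proof of Proposition~\ref{prop:compositions}) to the Lipschitz hypersurface $\partial\Om$: for $\sH^{n-1}$-a.e.\ $\bx\in\partial\Om$ the tangential Jacobian of $\bu|_{\partial\Om}$ equals $(\det\bF)\,|\bF^{\mi\rmT}\bn(\bx)|$ (this is the magnitude of the Nanson surface-element transformation $\bn\,\dd A\mapsto(\det\bF)\bF^{\mi\rmT}\bn\,\dd A$), so that $\dd\sH^{n-1}_\by=(\det\bF)\,|\bF^{\mi\rmT}\bn(\bx)|\,\dd\sH^{n-1}_\bx$ under $\by=\bu(\bx)$. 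One also needs that the trace of $\bwh\circ\bu$ on $\sS$ agrees $\sH^{n-1}$-a.e.\ with the composition of the trace of $\bwh$ on $\bu(\sS)$ with $\bu$, which I would justify by approximating $\bwh$ by smooth maps and using that $\bu$ is a bi-Lipschitz homeomorphism carrying $\sS$ onto $\bu(\sS)$. Combining these with the definition $\bsu(\by)=\bs(\bx)\,|\bF^{\mi\rmT}\bn(\bx)|^{\mi1}(\det\bF)^{\mi1}$, the factors $|\bF^{\mi\rmT}\bn(\bx)|$ and $\det\bF$ cancel and \eqref{eqn:cov-1-1}$_5$ follows. The main obstacle is precisely this surface step — getting the $(n-1)$-dimensional area transformation rule right and the compatibility of traces under composition; the four volume identities are then just the algebraic cancellation that was engineered into the definitions \eqref{eqn:chain-0-1}, \eqref{eqn:chain-1-1}, and those of $\bbu,\bsu$.
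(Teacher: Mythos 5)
Your proposal is correct and follows essentially the same route as the paper: Proposition~\ref{prop:compositions} for the regularity and chain rule, the pointwise algebraic identities of Lemma~\ref{lem:chain-1-1} (and the definitions of $\Wu$, $\bbu$, $\bsu$) to cancel the Jacobian factor, and the change-of-variables formula for injective Lipschitz maps. The paper's written proof establishes only \eqref{eqn:cov-1-1}$_2$ in detail and relegates \eqref{eqn:cov-1-1}$_5$ to a footnote citing the Piola identities $\bs=\bS\bn$, $\bS\bF^{\rmT}=(\det\bF)\bS_u$, and Ciarlet's book; your treatment of the surface term via the Nanson transformation $\dd\sH^{n-1}_\by=(\det\bF)\,|\bF^{\mi\rmT}\bn|\,\dd\sH^{n-1}_\bx$ and the stated definition of $\bsu$ is a direct and self-contained version of the same calculation, and your note on trace compatibility under composition with a bi-Lipschitz map flags a point the paper passes over silently.
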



\begin{remark} Equations \eqref{eqn:cov-1-1} remain valid if
$\bv\in W^{1,\infty}(\Om;\R^n)$
and $\bw\in W^{1,2}(\Om;\R^n)$ are prescribed and
$\bvh:=\bv\circ\bu^{\mi1}\in W^{1,\infty}(\bu(\Om);\R^n)$ and
$\bwh:=\bw\circ\bu^{\mi1}\in W^{1,2}(\bu(\Om);\R^n)$ are defined.
\end{remark}

\begin{proof}[Proof of Proposition~\ref{prop:cov-1-1}]
We shall prove \eqref{eqn:cov-1-1}$_2$.  The proofs of
the other equations are similar.\footnote{Equation
\eqref{eqn:cov-1-1}$_5$ is based upon the identities
$\bs=\bS\bn$, $\bsu=\Su\bm$,
$\bm=(\bF^{\mi\rmT}\bn)/|\bF^{\mi\rmT}\bn|$, and
(cf.~\eqref{eqn:chain-1-1}$_1$)  $\bS\bF^\rmT=(\det\bF)\bS_u$,
where $\bm$ denotes the outward unit normal to $\bu(\Om)$.
See, e.g., \cite[Section~1.7]{Ci1988}.}
Let $W$ satisfy (1)--(3) of
Hypothesis~\ref{def:W}
and suppose that $\bu$,  $\bv$, $\bvh$, $\bw$,
and $\bwh$ are as given in the statement of the proposition.
Then, by Proposition~\ref{prop:compositions},
$\bv\in W^{1,\infty}(\Om;\R^n)$, $\bw\in W^{1,2}(\Om;\R^n)$,
and
\be\label{eqn:chain-4-1}
\grad_\bx\bv(\bx)=\grad_\by\bvh\big(\bu(\bx)\big)\grad\bu(\bx),
\qquad
\grad_\bx\bw(\bx)=\grad_\by\bwh\big(\bu(\bx)\big)\grad\bu(\bx),
\ee
\n for $a.e.~\bx\in \Om$.  Therefore, in view of
\eqref{eqn:chain-4-1} and \eqref{eqn:chain-1-1}$_1$ with
$\bG=\grad_\by\bvh(\bu(\bx))$, $\bH=\grad_\by\bwh(\bu(\bx))$,
and $\bF=\grad\bu(\bx)$,
\be\label{eqn:S=Shat-1}
\bS\big(\bx,\grad\bv(\bx)\big)\Td\grad\bw(\bx)=
{\Su}\big(\by,\grad\bvh(\by)\big)
\Td\grad\bwh(\by)
\big[\det\grad\bu(\bx)\big], \quad \by:=\bu(\bx).
\ee
\n Finally, we integrate \eqref{eqn:S=Shat-1} over $\Om$ and
then apply the change of variables formula for  injective Lipschitz
mappings (see, e.g., \cite[Theorem~3.2.5]{Fe69})
to deduce the desired result, \eqref{eqn:cov-1-1}$_2$.
\end{proof}


We now fix a diffeomorphism $\bu\in \AD$  (see
\eqref{eqn:var-and-AD}$_1$) and consider $\bu(\Om)$ as a new reference
configuration.  We first define the admissible deformations and the
corresponding variations that originate at this reference configuration.

\begin{definition}
Fix a \emph{diffeomorphism} $\bu\in C^1(\Omc;\R^n)$
that satisfies $\bu\in\AD$ and define
\[
\begin{gathered}
\ADu:=\{\bvh\in W^{1,\infty}(\bu(\Om);\R^n):
\det\grad\bvh>0~a.e.,
\ \bvh=\bi\ \text{ on } \bu(\sD)\},\\[2pt]
\Varu:=\{\bwh\in W^{1,2}(\bu(\Om);\R^n):
\bwh=\mathbf{0} \text{ on $\bu(\sD)$}\}.
\end{gathered}
\]
\end{definition}


Recall that the total energy $\E$ of $\bv\in\AD$ is defined by
\be\label{eqn:TEr}
\E(\bv):=\int_\Om \big[W\big(\bx,\grad\bv(\bx)\big)
-\bb(\bx)\cdot\bv(\bx)\big]\dd\bx
-\int_{\sS} \bs(\bx)\cdot\bv(\bx)\,\dd\sH^{n-1}_\bx
\ee
\n and $\bv\e\in\AD$ is a weak
solution of the equilibrium equations corresponding
to \eqref{eqn:TEr} if
\be\label{eqn:EEr}
0=\int_\Om
\big[\bS\big(\bx,\grad\bv\e(\bx)\big):\grad\bw(\bx)
-\bb(\bx)\cdot\bw(\bx)\big]\dd\bx
-
\int_\sS \bs(\bx)\cdot\bw(\bx)\,\dd \sH^{n-1}_\bx
\ee
\n for all variations $\bw\in\Var$.


\begin{lemma}\label{lem:ref=deformed-alt}
Let $\bu\in\AD$ be a diffeomorphism and suppose that $\bv\in\AD$.
Then $\bv$ is a weak solution of the equilibrium equations
\eqref{eqn:EEr}  if and only if  $\bvh:= \bv\circ\bu^{\mi1}$
is a weak solution of the equilibrium equations corresponding to
the energy
\be\label{eqn:TEd-alt}
\Eu(\bzh):=\int_{\bu(\Om)} \big[\Wu\big(\by,\grad\bzh(\by)\big)
-\bbu(\by)\cdot\bzh(\by)\big]\dd\by
-\int_{\bu(\sS)} \bsu(\by)\cdot\bzh(\by) \,\dd\sH^{n-1}_\by.
\ee
\n Moreover, necessary and sufficient conditions for the
uniform positivity of the second variation of $\E$ at
$\bv$ is that the second variation of $\Eu$
be uniformly positive at $\bvh$.
\end{lemma}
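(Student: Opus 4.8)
The plan is to reduce the whole statement to the change-of-variables identities already recorded in Proposition~\ref{prop:cov-1-1}, using Proposition~\ref{prop:compositions} to transport the relevant function spaces. Since $\bu\in\AD$ is a diffeomorphism, Proposition~\ref{prop:compositions} gives that $\bu(\Om)$ is a bounded Lipschitz domain and that composition with $\bu$ (resp.\ with $\bu^{\mi1}$) is a linear homeomorphism $W^{1,q}(\bu(\Om);\R^n)\to W^{1,q}(\Om;\R^n)$ (resp.\ its inverse) for $q\in\{2,\infty\}$; in particular the spaces $\ADu$, $\Varu$ and the energy $\Eu$ of \eqref{eqn:TEd-alt} make sense, and Remark~\ref{rem:a.e.} governs the positivity of Jacobians.

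First I would set up the basic correspondences. Because $\bu$ is a bi-Lipschitz homeomorphism of $\Omc$ onto $\overline{\bu(\Om)}$ with $\bu(\partial\Om)=\partial\bu(\Om)$, the decomposition $\partial\Om=\overline{\sD}\cup\overline{\sS}$ is carried to $\partial\bu(\Om)=\overline{\bu(\sD)}\cup\overline{\bu(\sS)}$ with $\bu(\sD)$, $\bu(\sS)$ relatively open and disjoint, and traces transform covariantly under this change of variables. Hence $\bw\mapsto\bwh:=\bw\circ\bu^{\mi1}$, with inverse $\bwh\mapsto\bwh\circ\bu$, is a linear bijection of $\Var$ onto $\Varu$ (the condition $\bw=\mathbf0$ on $\sD$ being equivalent to $\bwh=\mathbf0$ on $\bu(\sD)$), and $\bv\mapsto\bvh:=\bv\circ\bu^{\mi1}$ is a bijection of $\AD$ onto $\ADu$: indeed $\bvh\in W^{1,\infty}(\bu(\Om);\R^n)$ by Proposition~\ref{prop:compositions}, $\det\grad\bvh>0$ a.e.\ by Remark~\ref{rem:a.e.}, and since $\bu\in\AD$ forces $\bv=\bu=\bd$ on $\sD$, we get $\bvh=\bu\circ\bu^{\mi1}=\bi$ on $\bu(\sD)$.

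Next I would transform the equations termwise. Fix $\bv\in\AD$, put $\bvh:=\bv\circ\bu^{\mi1}\in\ADu$, and for arbitrary $\bwh\in\Varu$ set $\bw:=\bwh\circ\bu\in\Var$. Applying \eqref{eqn:cov-1-1}$_2$, \eqref{eqn:cov-1-1}$_4$, and \eqref{eqn:cov-1-1}$_5$ to $(\bv,\bvh,\bw,\bwh)$ shows that the right-hand side of \eqref{eqn:EEr} at $\bw$ equals
\[
\int_{\bu(\Om)}\big[\Su\big(\by,\grad\bvh(\by)\big)\Td\grad\bwh(\by)-\bbu(\by)\cdot\bwh(\by)\big]\dd\by-\int_{\bu(\sS)}\bsu(\by)\cdot\bwh(\by)\,\dd\sH^{n-1}_\by,
\]
which is precisely $\delta\Eu(\bvh)[\bwh]$. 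Since $\bw$ runs over $\Var$ exactly when $\bwh$ runs over $\Varu$, $\bv$ satisfies \eqref{eqn:EEr} for all $\bw\in\Var$ if and only if $\bvh$ is a weak solution of the equilibrium equations for $\Eu$. For the second variation, \eqref{eqn:cov-1-1}$_3$ gives, with $\bw=\bwh\circ\bu$,
\[
\int_{\Om}\grad\bw\Td\A\big(\bx,\grad\bv(\bx)\big)[\grad\bw]\,\dd\bx=\int_{\bu(\Om)}\grad\bwh\Td\Au\big(\by,\grad\bvh(\by)\big)[\grad\bwh]\,\dd\by,
\]
so it remains only to note that $\int_\Om|\grad\bw|^2\dd\bx$ and $\int_{\bu(\Om)}|\grad\bwh|^2\dd\by$ are comparable with constants depending only on $\bu$: from $\grad\bw(\bx)=\grad\bwh(\bu(\bx))\grad\bu(\bx)$, submultiplicativity of the Frobenius norm, and the change-of-variables formula one gets $c^{\mi1}\int_\Om|\grad\bw|^2\le\int_{\bu(\Om)}|\grad\bwh|^2\le c\int_\Om|\grad\bw|^2$ with $c=c(\|\grad\bu\|_{\infty,\Om},\|\grad\bu^{\mi1}\|_{\infty,\bu(\Om)},\det\grad\bu,\det\grad\bu^{\mi1})\ge1$, all these quantities being positive and bounded because $\bu$ is a diffeomorphism of the compact set $\Omc$. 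Hence $\int_\Om\grad\bw\Td\A(\grad\bv)[\grad\bw]\ge 4k\int_\Om|\grad\bw|^2$ for all $\bw\in\Var$ for some $k>0$ is equivalent to $\int_{\bu(\Om)}\grad\bwh\Td\Au(\grad\bvh)[\grad\bwh]\ge 4k'\int_{\bu(\Om)}|\grad\bwh|^2$ for all $\bwh\in\Varu$ for some $k'>0$; that is, the second variation of $\E$ at $\bv$ is uniformly positive precisely when that of $\Eu$ at $\bvh$ is.

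The only ingredients that are not pure bookkeeping are the covariance of the Dirichlet trace under the bi-Lipschitz map (which is what guarantees the correspondences $\Var\leftrightarrow\Varu$ and $\bd$-data $\leftrightarrow\bi$-data) and the elementary two-sided bound between the two gradient $L^2$-norms; with those in hand the statement follows by direct substitution into Propositions~\ref{prop:compositions} and~\ref{prop:cov-1-1}.
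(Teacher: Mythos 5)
Your proposal is correct and follows essentially the same route as the paper: transport $\Var$ and $\AD$ via composition with $\bu^{\mi1}$, apply the identities of Proposition~\ref{prop:cov-1-1} termwise to match the weak forms and second variations, and conclude by comparing the two gradient $L^2$-norms. If anything, your explicit two-sided bound $c^{\mi1}\int_\Om|\grad\bw|^2\le\int_{\bu(\Om)}|\grad\bwh|^2\le c\int_\Om|\grad\bw|^2$ is slightly more careful than the paper's one-line equality at the corresponding step, which as written conflates $|\grad\bwh(\bu(\bx))|$ with $|\grad\bw(\bx)|$ but is repaired by exactly the comparability you state.
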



\begin{proof}  The first assertion follows from
\eqref{eqn:cov-1-1}$_{2,4,5}$, \eqref{eqn:EEr}, and an argument similar
to the following one.
To prove sufficiency, suppose that the second variation of $\Eu$
is uniformly
positive at $\bvh$ with constant $k$.  Fix $\bw\in \Var$ and define
$\bwh:=\bw\circ\bu^{\mi1}$.  Then, by
Proposition~\ref{prop:compositions},
$\bwh\in W^{1,2}(\bu(\Om);\R^n)$ with
\be\label{eqn:chain-again-alt}
\grad\bwh(\by)
=
\grad_\bx \bw\big(\bu^{\mi1}(\by)\big)
\grad \bu^{\mi1}(\by)\
\text{ for $a.e.~\by\in\bu(\Om)$}.
\ee
\n Moreover, since $\bw=\mathbf{0}$ on $\sD$ it follows that
$\bwh=\mathbf{0}$ on $\bd(\sD)$ and hence that $\bwh\in\Varu$.

Next, the assumed uniform positivity together with
\eqref{eqn:cov-1-1}$_3$
shows that the second variation of $\E$ at $\bv$ in
the direction $\bw$ is bounded below by
\[
k\int_{\bu(\Om)} |\grad\bwh(\by)|^2\,\dd\by =
k\int_{\Om} |\grad\bw(\bx)|^2\det\grad\bu(\bx)\,\dd\bx,
\]
\n where the last equality follows from \eqref{eqn:chain-again-alt} and
the change of variables formula.  The desired result now follows
since $\det\grad\bu$ is bounded away from zero on the compact set
$\Omc$.  The necessity argument is similar.
\end{proof}


\subsection{Proof of Theorem~\ref{thm:NT-3}} \label{sec:Proof}

Our proof of Theorem~\ref{thm:NT-3} will require us to show that
\eqref{eqn:dist-small-ref} implies that the gradient of some mapping is
sufficiently close to the set of rotations.
We first define this mapping and show that the distance of its gradient
from the rotations is bounded above by a constant times
the strain difference given in \eqref{eqn:dist-small-ref}.

\begin{lemma}\label{lem:strain=dist}  Let  $\bu\e,\bv\in\AD$ with
$\bu\e$ a diffeomorphism. Define $\bFFe:=\grad\bue$,
$\bGG:=\grad\bv$,
\be\label{eqn:sup+inf+d}
\Upsilon_{\!\mathrm e}:= \sup_{\bx\in\Omc} |\bFFe(\bx)|,
\qquad
\upsilon\e:=\inf_{\bx\in\Omc} \big|[\bFFe(\bx)]^{\mi1}\big|^{\mi1},
\qquad
d(\bx):=\dist\!\big(\bGG\bFFei,\SOn\big).
\ee
\n Then
\be\label{eqn:strains-vs-dist-to-rotations}
\upsilon\e^{2} d^2\le
\sqrt{n}\,|\bGGT\bGG-\bFFeT\bFFe|
\le \Upsilon_{\!\mathrm e}^2 d\sqrt{n}\,\big(d+2\sqrt{n}\,\big).
\ee
\end{lemma}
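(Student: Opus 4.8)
The plan is to reduce the statement to Lemma~\ref{lem:strain=dist-to-rotations} applied to the single matrix field $\bK:=\bGG\bFFei$, the relative deformation gradient. First I would record the two congruence identities that tie the strain of $\bK$ to the strain difference appearing in \eqref{eqn:dist-small-ref}: since $\bFFe\bFFei=\bI$ we also have $\bFFeiT\bFFeT=\bI$, and hence, for $a.e.~\bx\in\Om$,
\[
\bK^\rmT\bK-\bI=\bFFeiT\big(\bGGT\bGG-\bFFeT\bFFe\big)\bFFei,
\qquad
\bGGT\bGG-\bFFeT\bFFe=\bFFeT\big(\bK^\rmT\bK-\bI\big)\bFFe.
\]
Because $\bue$ is a diffeomorphism we have $\det\bFFe>0$ on $\Omc$, and because $\bv\in\AD$ we have $\det\bGG>0$ $a.e.$; hence $\det\bK=\det\bGG/\det\bFFe>0$ $a.e.$, so $\bK(\bx)\in\Mnp$, and moreover $\Upsilon_{\!\mathrm e}<\infty$ and $\upsilon\e>0$ since $\grad\bue$ is continuous with $\det\grad\bue>0$ on the compact set $\Omc$. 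In particular $d(\bx)=\dist(\bK(\bx),\SOn)$ is exactly the distance to which Lemma~\ref{lem:strain=dist-to-rotations} applies, the associated nonlinear strain being $\tfrac12(\bK^\rmT\bK-\bI)$.

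Next I would pass between $|\bK^\rmT\bK-\bI|$ and $|\bGGT\bGG-\bFFeT\bFFe|$ using submultiplicativity of the Frobenius norm \eqref{eqn:M-norm} together with the definitions \eqref{eqn:sup+inf+d}. From the first identity, $|\bK^\rmT\bK-\bI|\le|\bFFei|^2\,|\bGGT\bGG-\bFFeT\bFFe|\le\upsilon\e^{-2}\,|\bGGT\bGG-\bFFeT\bFFe|$ (recall $\upsilon\e^{-1}=\sup_\bx|[\bFFe(\bx)]^{\mi1}|$), while from the second identity $|\bGGT\bGG-\bFFeT\bFFe|\le|\bFFe|^2\,|\bK^\rmT\bK-\bI|\le\Upsilon_{\!\mathrm e}^2\,|\bK^\rmT\bK-\bI|$. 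Hence, pointwise $a.e.$,
\[
\upsilon\e^2\,|\bK^\rmT\bK-\bI|\ \le\ |\bGGT\bGG-\bFFeT\bFFe|\ \le\ \Upsilon_{\!\mathrm e}^2\,|\bK^\rmT\bK-\bI|.
\]

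Finally, applying Lemma~\ref{lem:strain=dist-to-rotations} with $\bF=\bK$ — for which $2\sqrt{n}\,\big|\tfrac12(\bK^\rmT\bK-\bI)\big|=\sqrt{n}\,|\bK^\rmT\bK-\bI|$ — gives
\[
d^2\ \le\ \sqrt{n}\,|\bK^\rmT\bK-\bI|\ \le\ \sqrt{n}\,d\,(d+2\sqrt{n}).
\]
Substituting the lower bound $|\bK^\rmT\bK-\bI|\ge d^2/\sqrt{n}$ into the left inequality of the displayed chain yields $\upsilon\e^2 d^2\le\sqrt{n}\,|\bGGT\bGG-\bFFeT\bFFe|$, and substituting the upper bound $|\bK^\rmT\bK-\bI|\le d(d+2\sqrt{n})$ into the right inequality and multiplying by $\sqrt{n}$ yields $\sqrt{n}\,|\bGGT\bGG-\bFFeT\bFFe|\le\Upsilon_{\!\mathrm e}^2\,d\sqrt{n}\,(d+2\sqrt{n})$; together these are precisely \eqref{eqn:strains-vs-dist-to-rotations}. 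I do not expect a genuine obstacle: the only points that require care are verifying the two congruence identities and checking that the Frobenius estimates go through $a.e.$ with the stated constants. The conceptual content is simply that the relative deformation gradient $\bGG\bFFei$ controls the rotation-distance $d$, while its own strain is a bounded congruence of the strain difference in \eqref{eqn:dist-small-ref}.
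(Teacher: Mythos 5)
Your proof is correct and takes essentially the same approach as the paper: both reduce to Lemma~\ref{lem:strain=dist-to-rotations} applied to the relative gradient $\bGG\bFFei$, and pass between $|\bK^\rmT\bK-\bI|$ and $|\bGGT\bGG-\bFFeT\bFFe|$ via the congruence $\bGGT\bGG-\bFFeT\bFFe=\bFFeT(\bK^\rmT\bK-\bI)\bFFe$ and submultiplicativity of the Frobenius norm. The only cosmetic difference is that you record both directions of the congruence explicitly, whereas the paper states one identity (with $\bE=\tfrac12(\bK^\rmT\bK-\bI)$) and then derives the two-sided Frobenius bound from it in a single chain of inequalities.
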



\begin{proof}
We first note that
$\grad\bu\e\in C^0(\Omc;\R^n)$ with
$\det\grad\bu\e>0$ on the compact set $\Omc$ and hence $\Upsilon\e$ and
$\upsilon\e$  are strictly positive and finite. Define
\[
\bF=\bF(\bx):= \bGG\bFFei, \qquad
\bE=\bE(\bx):=\tfrac12\big(\bF^\rmT\bF-\bI\big).
\]
\n Then Lemma~\ref{lem:strain=dist-to-rotations}
shows that $\bE$ and $d$, given by \eqref{eqn:sup+inf+d}$_3$, satisfy
\be\label{eqn:L7.6-2}
d^2 \le 2\sqrt{n}\,|\bE|\le  d\sqrt{n}\,\big(d +2\sqrt{n}\,\big).
\ee


  Next, consider
\begin{gather}\label{eqn:equals}
\bGGT\bGG-\bFFeT\bFFe
=\bFFeT
\big[(\bGG\bFFei)^{\rmT}
\bGG\bFFei-\bI\big]
\bFFe
=2\bFFeT\bE\bFFe,\\[2pt]
|\bE|
=
\big|\bFFeiT
\big(\bFFeT\bE\bFFe\big)
\bFFei\big|
\le
\big|\bFFei\big|^2
\big|\bFFeT\bE\bFFe\big|, \qquad
\big|\bFFeT\bE\bFFe\big|
\le |\bE||\bFFe|^2.
\label{eqn:details}
\end{gather}
\n Thus, if we now combine \eqref{eqn:L7.6-2} and \eqref{eqn:details}
we find that
\be\label{eqn:last-strain-proof}
\tfrac12\tfrac{d^2}{\sqrt{n}\,}|\bFFei|^{\mi2}
\le |\bFFei|^{\mi2}|\bE|
\le  \big|\bFFeT\bE\bFFe\big|
\le |\bE||\bFFe|^2
\le \tfrac12d\big(d +2\sqrt{n}\,\big) |\bFFe|^2.
\ee
\n Finally, \eqref{eqn:sup+inf+d}$_{1,2}$,  \eqref{eqn:equals},
and \eqref{eqn:last-strain-proof}
yield the desired result, \eqref{eqn:strains-vs-dist-to-rotations}.
\end{proof}

\begin{remark}\label{rem:near-rotations} (1).~The mapping whose gradient
is close to the set of rotations is $\bv\circ\bu\e^{\mi1}$.  (2).~If we
make use of \eqref{eqn:d<2E}, in place of the first inequality in
Lemma~\ref{lem:strain=dist-to-rotations}, we find that
\be\label{eqn:d<2C-Ce}
\upsilon\e^{2} d\le
|\bGGT\bGG-\bFFeT\bFFe|.
\ee
\n  Once again, although \eqref{eqn:d<2C-Ce} does not scale properly for
large $d$, its use will simplify the computation, which involves
small-strain differences, in the next proof.
\end{remark}


\begin{proof}[Proof of Theorem~\ref{thm:NT-3}]
Let $\bu\e\in\AD$ satisfy hypotheses (A)--(C) of the theorem.   Define
$\Ome:=\bu\e(\Om)$. Then, by Proposition~\ref{prop:compositions},
$\Ome$ is a bounded Lipschitz domain.  Suppose that $\varepsilon>0$ is a
small parameter to be determined and let
$\bv\in \AD$  satisfy \eqref{eqn:dist-small-ref}.
Define, $\buh\e,\bvh\in\AD_{u_e}$ by
\be\label{eqn:def-of-defs}
\buh\e:=\bu\e\circ\bu\e^{\mi1}=\bi, \qquad \bvh:=\bv\circ\bu\e^{\mi1}.
\ee


Let $\delta\in(0,1)$ be given as in Theorem~\ref{thm:SVP=LMBMO}.
We shall determine $\varepsilon$ such that $\buh\e$ and $\bvh$ satisfy
the hypotheses of Theorem~\ref{thm:SVP=LMBMO}  (with $\bu\e,\bv,\Om,$
and $\E$ replaced by  $\buh\e,\bvh,\Ome,$ and $\E_{u_e}$).
In view of Lemma~\ref{lem:ref=deformed-alt} and assumptions (A)--(C),
$\buh\e=\bi$ is a weak equilibrium solution for
$\E_{u_e}$, given by \eqref{eqn:TEd-alt} with $u=u_e$, at which the second
variation of $\E_{u_e}$ is uniformly positive.  Thus, $\buh\e$ satisfies
\eqref{eqn:small+SV}$_1$.
Trivially,
$\dist(\bI,\SOn)=0$ and
the rotation associated with $\bi$ in
Proposition~\ref{prop:GR}
is $\bI$.  Next, if we combine \eqref{eqn:dist-small-ref}
and \eqref{eqn:d<2C-Ce} we find, with the aid of \eqref{eqn:der-inv},
\eqref{eqn:def-of-defs}$_2$, Remark~\ref{rem:a.e.}, and the chain
rule, that
\be\label{eqn:dist-Dv-small}
\dist\!\big(\grad\bvh(\by),\SOn\big)
<\upsilon\e^{\mi2}\varepsilon\
\text{ for $a.e.~\by\in\Ome$.}
\ee
\n Next, fix $p>n$. Then \eqref{eqn:dist-Dv-small},
Proposition~\ref{prop:BC+SR=close-in-L1}, and
Proposition~\ref{prop:GR} yield
\[  
||\grad\bvh-\bI||_{1,\Ome}
<
\upsilon\e^{\mi2}A^*|\Ome|^{1/p}\varepsilon.
\qquad
\tsn{\grad\bvh-\bI}_{\BMO(\Ome)} =
\tsn{\grad\bvh}_{\BMO(\Ome)} < M\upsilon\e^{\mi2}\varepsilon
\]  
\n for some constants $A^*>0$ and $M>0$.
%


Now, let  $\varepsilon>0$ be sufficiently small so that
\[
\max\{M\varepsilon,\
\varepsilon,\
A^*|\Ome|^{1/p}\varepsilon \}<\upsilon\e^{2}\delta.
\]
\n In addition, define
\[
\sB:=\{\bG\in\Mnp: \dist(\bG,\SOn) < \delta < 1 \}.
\]
\n Then the hypotheses of Theorem~\ref{thm:SVP=LMBMO}
have been satisfied; consequently that result yields
\be\label{eqn:estimate-E}
\E_{u_e}(\bvh)\ge \E_{u_e}(\bi)
+k\int_\Ome|\grad\bvh-\bI|^2\dd\by.
\ee
\n Moreover, $\bvh$ \emph{cannot be a weak solution of the
equilibrium equations corresponding to $\E_{u_e}$}.

Next, Proposition~\ref{prop:cov-1-1} together with \eqref{eqn:TEr}
and \eqref{eqn:TEd-alt}  shows that
$\E_{u_e}(\bvh)=\E(\bv)$ and $\E_{u_e}(\bi)=\E(\bu_e)$;  thus,
by \eqref{eqn:estimate-E},
\[
\E(\bv)\ge \E(\bu\e)
+k\int_\Ome|\grad\bvh-\bI|^2\dd\by.
\]
\n Consequently, $\E(\bv)> \E(\bu\e)$
unless $\grad\bvh\equiv\bI$.
However, $\grad\bvh=\bI$ on the connected open set $\Ome$
together with $\bvh=\bi$ on $\sD$ yields $\bvh\equiv\bi$.  Equivalently,
(cf.~\eqref{eqn:def-of-defs}$_2$) $\bv\circ\bu\e^{\mi1}=\bi$ and so
$\bv=\bu\e$. Therefore, $\bv\not\equiv\bu\e$ will
have strictly greater energy than $\bu\e$.


Finally, if $\bv$ were to satisfy \eqref{eqn:EEr},
then Lemma~\ref{lem:ref=deformed-alt}
would imply that $\bvh=\bv\circ\bu\e^{\mi1}$ is a weak solution of
the equilibrium equations corresponding to $\E_e$.  However, this is not
possible (see the sentence in italics following \eqref{eqn:estimate-E}).
\end{proof}

\n \textbf{Acknowledgement.} The authors thank Mario Milman for
interesting discussions regarding $\BMO$ and interpolation theory.
The authors would also like to thank one of the referees of
\cite{SS18} for their suggestion that the results in
Kristensen \& Taheri~\cite{KT03} might
lead to an extension of John's~\cite{Jo72}
uniqueness theorem to the mixed problem.

\appendix


\section{Versions of Taylor's Theorem for
Non-convex Sets}\label{sec:Taylor}

Recall that $\Omega\subset\R^n$, $n\ge2$, is a Lipschitz domain and
$\cO\subset\MN$ is a nonempty, open set.
If $\sB\subset\MN$ is a nonempty, bounded,
open set that satisfies $\overline{\sB}\subset\cO$,  then, for
$\varepsilon>0$ and sufficiently small, the set
\be\label{eqn:neigh-of-sB-III}
\sB_\varepsilon := \{\bK \in \MN: |\bK-\bF|<\varepsilon\
\text{ for some $\bF\in\sB$}\}
\ee
\n is a nonempty, bounded, open set that satisfies
$\overline{\sB}_\varepsilon \subset \cO$.


\begin{lemma}\label{lem:taylor-III} Let $\Omega$, $\cO$, and
$W:\overline{\Omega}\times\cO\to\R$ be as given in
(1)--(3) of Hypothesis~\ref{def:W}.
Suppose that $\sB\subset\MN$ is a nonempty,
bounded, open set that satisfies $\overline{\sB}\subset\cO$.
Then there exists a constant $c=c(\sB)>0$ such that, for every
$\bF,\bG\in\overline{\sB}$ and almost every $\bx\in\Omega$,
\be\label{eqn:taylor-III-1}
W(\bx,\bG)\ge W(\bx,\bF) + \DD W(\bx,\bF)[\bH]
+\tfrac12\DD^2 W(\bx,\bF)[\bH,\bH] -c|\bH|^3,
\ee
\n where $\bH:=\bG-\bF$.
\end{lemma}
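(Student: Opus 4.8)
The only thing preventing a one-line appeal to the ordinary Taylor theorem is that $\overline{\sB}$ need not be convex, so the segment joining two points of $\overline{\sB}$ may leave $\cO$, where $W(\bx,\cdot)$ is only assumed $C^3$. The plan is to dichotomize on the length of $\bH=\bG-\bF$: when $\bF$ and $\bG$ are close the connecting segment stays inside $\cO$ and the genuine third-order expansion applies, while when they are far apart the claimed lower bound is a triviality because $|\bH|$ is then bounded below.

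First I would fix, exactly as in \eqref{eqn:neigh-of-sB-III}, an $\varepsilon_0>0$ so small that $\overline{\sB}_{\varepsilon_0}\subset\cO$. Since $\overline{\sB}_{\varepsilon_0}$ is compact, Hypothesis~\ref{def:W}(3) furnishes constants $C_0,C_1,C_2,C_3$ bounding $|W|$, $|\DD W|$, $|\DD^2 W|$, $|\DD^3 W|$, respectively, on $\overline{\Omega}\times\overline{\sB}_{\varepsilon_0}$ (hence on $\overline{\Omega}\times\overline{\sB}$), and I set $D:=\sup\{|\bF-\bG|:\bF,\bG\in\overline{\sB}\}<\infty$.

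\emph{Case 1: $|\bH|<\varepsilon_0$.} For each $t\in[0,1]$ the matrix $\bF+t\bH$ lies within distance $|\bH|<\varepsilon_0$ of $\bF\in\sB$, so the whole segment lies in $\sB_{\varepsilon_0}\subset\cO$; by Hypothesis~\ref{def:W}(1) the scalar function $t\mapsto W(\bx,\bF+t\bH)$ is $C^3$ on $[0,1]$ for a.e.\ $\bx$. Taylor's theorem with integral remainder, after reading off the derivatives, gives
\[
W(\bx,\bG)=W(\bx,\bF)+\DD W(\bx,\bF)[\bH]+\tfrac12\DD^2 W(\bx,\bF)[\bH,\bH]
+\int_0^1\tfrac{(1-t)^2}{2}\,\DD^3 W(\bx,\bF+t\bH)[\bH,\bH,\bH]\,\dd t,
\]
and the remainder integral is bounded below by $-\tfrac16 C_3|\bH|^3$.

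\emph{Case 2: $|\bH|\ge\varepsilon_0$.} Here I would expand nothing: using only $\bF,\bG\in\overline{\sB}$ and $|\bH|\le D$,
\[
W(\bx,\bG)-W(\bx,\bF)-\DD W(\bx,\bF)[\bH]-\tfrac12\DD^2 W(\bx,\bF)[\bH,\bH]\ \ge\ -2C_0-C_1D-\tfrac12 C_2D^2,
\]
and since $1\le|\bH|^3/\varepsilon_0^3$ the right-hand side is at least $-\varepsilon_0^{-3}\bigl(2C_0+C_1D+\tfrac12 C_2D^2\bigr)|\bH|^3$. Taking
\[
c:=\max\Bigl\{\tfrac16 C_3,\ \varepsilon_0^{-3}\bigl(2C_0+C_1D+\tfrac12 C_2D^2\bigr)\Bigr\}
\]
then yields \eqref{eqn:taylor-III-1} in both cases. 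There is no serious obstacle; the single point that deserves attention is the elementary observation in Case~1 that a segment of length less than $\varepsilon_0$ cannot escape $\cO$, which is exactly what replaces the convexity assumption of the usual Taylor theorem.
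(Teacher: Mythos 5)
Your proposal is correct, and it reaches the same constant-on-a-compact-set conclusion by a genuinely more direct route than the paper. The paper defines $c$ outright as the supremum of the ratio
\[
\frac{W(\bx,\bF)-W(\bx,\bG)+\DD W(\bx,\bF)[\bH]+\tfrac12\DD^2 W(\bx,\bF)[\bH,\bH]}{|\bH|^3}
\]
over $\overline{\Omega}\times\overline{\sB}_\varepsilon\times\overline{\sB}$ and argues by contradiction: if the supremum were infinite then, since the numerator is bounded on that compact set, one could extract sequences with $\bH_k\to\mathbf{0}$ and $\bF_k,\bG_k\to\bP$, and then a local Taylor expansion in a small ball $B(\bP,\delta)\subset\sB_\varepsilon$ bounds the ratio by $\tfrac16 c^*$, a contradiction. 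Your two-case dichotomy on $|\bH|$ implements exactly the same underlying dichotomy (the estimate is trivial when $|\bH|$ is bounded below; Taylor applies when $|\bH|$ is small enough that the segment cannot escape $\cO$) but does so constructively, yielding an explicit constant $c$ and avoiding the subsequence extraction. One microscopic imprecision to fix: in Case~1 you write that $\bF+t\bH$ lies within $\varepsilon_0$ of $\bF\in\sB$, but $\bF$ is only in $\overline{\sB}$; since $\sB_{\varepsilon_0}$ is defined via strict inequality against points of $\sB$, a point at distance $t|\bH|<\varepsilon_0$ from $\bF\in\overline{\sB}$ still lies in $\sB_{\varepsilon_0}$ after approximating $\bF$ by a point of $\sB$, so the segment does stay in $\sB_{\varepsilon_0}\subset\cO$ and the argument goes through.
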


\begin{remark} If $\sB$ is convex, then Lemma~\ref{lem:taylor-III}
follows from Taylor's theorem.
\end{remark}


\begin{proof}[Proof of Lemma~\ref{lem:taylor-III}] Given
$\sB\subset\overline{\sB}\subset\cO$, let $\sB_\varepsilon$
(defined by \eqref{eqn:neigh-of-sB-III}) satisfy
$\overline{\sB}_\varepsilon \subset \cO$.  Define
\be\label{eqn:taylor-III-2}
c:= \sup_{\substack{\bF\in \overline{\sB},\,\bx\in\overline{\Omega}\\
\bG\in \overline{\sB}_\varepsilon}}
\frac{W(\bx,\bF)-W(\bx,\bG)
+ \DD W(\bx,\bF)[\bH]
+\tfrac12\DD^2 W(\bx,\bF)[\bH,\bH]}{|\bH|^3},
\ee
\n where $\bH:=\bG-\bF$. We need only show that the supremum is
finite in
order to conclude that \eqref{eqn:taylor-III-1} is satisfied for all
$\bF,\bG\in\overline{\sB}$ and  $a.e.~\bx\in\Omega$.  Suppose
that the right-hand side of \eqref{eqn:taylor-III-2} is not bounded.
In view of (3) of Hypothesis~\ref{def:W},
the numerator in \eqref{eqn:taylor-III-2} is bounded on the
compact set
$\overline{\Omega}\times\overline{\sB}_\varepsilon\times\overline{\sB}$;
thus, there must exist sequences $\bx_k\in\overline{\Omega}$,
$\bF_k\in\overline{\sB}$, and $\bG_k\in\overline{\sB}_\varepsilon$
such that $\bH_k:=\bG_k-\bF_k\to\mathbf{0}$.  It follows that there
exists
$\bP\in\overline{\sB}$ such that, for a subsequence (not relabeled)
$\bF_k,\bG_k\to\bP$.


We note that $\bP\in\sB_\varepsilon$, an open set; thus exists
a $\delta>0$ such that the open ball of radius $2\delta$
centered at $\bP$,
$B(\bP,2\delta)\subset\sB_\varepsilon$. Then, for
$k$ sufficiently large, $\bF_k,\bG_k\in B(\bP,\delta)$. In
addition,  since
$\bF\mapsto W(\cdot,\bF)$ is $C^3$,
$\overline{\Omega}\times\overline{B}(\bP,\delta)$ is compact,
and the unit ball in $\MN$ is compact,
it follows from (3) of Hypothesis~\ref{def:W} that
\[ 
\begin{gathered}
c^*:= \sup_{\substack{\bx\in \overline{\Omega} \\
\bN\in\overline{B}(\bP,\delta)}} |\DD^3 W(\bx,\bN)| <\infty,\
\text{ where }\\
|\DD^3 W(\bx,\bN)|:=\sup_{\substack{|\bK|\le1
\\ |\bL|\le1,\, |\bR|\le1}}\big|\DD^3 W(\bx,\bN)[\bK,\bL,\bR]\big|.
\end{gathered}
\] 

Next, choose $k\oo$ such that $\bF_k,\bG_k\in B(\bP,\delta)$,
for all $k\ge k\oo$, and apply Taylor's
theorem (see, e.g., \cite[Section~4.6]{Ze86}) to the function
$\bF\mapsto W(\bx_k,\bF)$ at $\bF_k$ and $\bG_k$
to conclude that, for all $k\ge k\oo$,
\be\label{eqn:taylor-III-3}
\begin{aligned}
W(\bx_k,\bG_k)= W(\bx_k,\bF_k) &+ \DD W(\bx_k,\bF_k)[\bH_k]\\[4pt]
&+\tfrac12\DD^2 W(\bx_k,\bF_k)[\bH_k,\bH_k]
+\tfrac16R(\bx_k,\bF_k,\bH_k),
\end{aligned}
\ee
\n where $\bH_k:=\bG_k-\bF_k$ and
\be\label{eqn:remainder-III}
|R(\bx_k,\bF_k,\bH_k)|
\le
|\bH_k|^3\sup_{t\in[0,1]}|\DD^3 W(\bx_k,\bF_k+t\bH_k)|\le c^*|\bH_k|^3.
\ee
\n  Then, in view of
\eqref{eqn:taylor-III-3} and \eqref{eqn:remainder-III},
\[
W(\bx_k,\bF_k)-W(\bx_k,\bG_k)
+ \DD W(\bx_k,\bF_k)[\bH_k]
+\tfrac12\DD^2 W(\bx_k,\bF_k)[\bH_k,\bH_k]
\le
\tfrac16 c^*|\bH_k|^3.
\]
\n This contradicts our assumption that the right-hand side of
\eqref{eqn:taylor-III-2} becomes arbitrarily large when $\bF=\bF_k$,
$\bG=\bG_k$, $\bx=\bx_k$, and $k\to\infty$.
\end{proof}


\begin{lemma}\label{lem:taylor-III-AA} Let $\Omega$, $\cO$,
$W:\overline{\Omega}\times\cO\to\R$, and $\sB\subset\MN$  be as given in
the statement of Lemma~\ref{lem:taylor-III}.
Then there exists a constant $\hat{c}=\hat{c}(\sB)>0$ such that,
for every
$\bF,\bG\in\overline{\sB}$, every $\bL\in\MN$,
and almost every $\bx\in\Omega$,
\[ 
\DD^2 W(\bx,\bG)[\bL,\bL]
\ge
\DD^2 W(\bx,\bF)[\bL,\bL] -\hat{c}|\bG-\bF||\bL|^2.
\] 
\end{lemma}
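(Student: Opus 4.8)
The plan is to mimic the proof of Lemma~\ref{lem:taylor-III} almost verbatim, the only new point being that now it is $\DD^2 W(\bx,\cdot)[\bL,\bL]$, regarded as a real-valued function of the matrix variable, whose increment must be estimated. First I would reduce to unit directions: since $\bL\mapsto\DD^2 W(\bx,\bF)[\bL,\bL]$ is homogeneous of degree two, it suffices to produce a constant $\hat{c}=\hat{c}(\sB)>0$ such that
\[
\big|\DD^2 W(\bx,\bG)[\bL,\bL]-\DD^2 W(\bx,\bF)[\bL,\bL]\big|\le\hat{c}\,|\bG-\bF|
\]
for all $\bF,\bG\in\overline{\sB}$, all $\bL\in\MN$ with $|\bL|\le1$, and $a.e.~\bx\in\Omega$; the asserted inequality then follows for an arbitrary $\bL\neq\mathbf{0}$ by applying this with $\bL$ replaced by $\bL/|\bL|$ and multiplying by $|\bL|^2$, using $\DD^2 W(\bx,\bG)[\bL,\bL]\ge\DD^2 W(\bx,\bF)[\bL,\bL]-\big|\DD^2 W(\bx,\bG)[\bL,\bL]-\DD^2 W(\bx,\bF)[\bL,\bL]\big|$ (the case $\bL=\mathbf{0}$ being trivial). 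Accordingly I would define $\hat{c}$ to be the supremum of
\[
\frac{\big|\DD^2 W(\bx,\bG)[\bL,\bL]-\DD^2 W(\bx,\bF)[\bL,\bL]\big|}{|\bG-\bF|}
\]
over $\bx\in\overline{\Omega}$, $\bF,\bG\in\overline{\sB}$ with $\bF\neq\bG$, and $\bL\in\MN$ with $|\bL|\le1$, and the whole task is to show this supremum is finite.

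The argument is by contradiction, exactly as in Lemma~\ref{lem:taylor-III}. By (3) of Hypothesis~\ref{def:W} the numerator is bounded on the compact set $\overline{\Omega}\times\overline{\sB}\times\overline{\sB}\times\{\bL\in\MN:|\bL|\le1\}$, so if the supremum were infinite there would be sequences $\bx_k\in\overline{\Omega}$, $\bF_k,\bG_k\in\overline{\sB}$, and $\bL_k\in\MN$ with $|\bL_k|\le1$ such that $\bH_k:=\bG_k-\bF_k\to\mathbf{0}$ while the ratio tends to $+\infty$. Passing to a subsequence (not relabeled), $\bF_k,\bG_k\to\bP$ for some $\bP\in\overline{\sB}\subset\cO$. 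Since $\cO$ is open, I would fix $\delta>0$ with $\overline{B}(\bP,2\delta)\subset\cO$; then $\bF_k,\bG_k\in B(\bP,\delta)$ for $k$ large and, crucially, since the ball $B(\bP,\delta)$ is convex, the entire segment $\{\bF_k+t\bH_k:t\in[0,1]\}$ lies in $B(\bP,\delta)\subset\cO$. Because $\bF\mapsto W(\bx,\bF)\in C^3(\cO)$ and, by (3) of Hypothesis~\ref{def:W}, $\DD^3 W$ is bounded on $\overline{\Omega}\times\overline{B}(\bP,\delta)$, I would set $c^*:=\sup\{|\DD^3 W(\bx,\bN)|:\bx\in\overline{\Omega},\ \bN\in\overline{B}(\bP,\delta)\}<\infty$, with $|\DD^3 W(\bx,\bN)|$ the norm of the trilinear form as defined in the proof of Lemma~\ref{lem:taylor-III}.

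Finally, applying the mean value theorem to the real-valued function $t\mapsto\DD^2 W(\bx_k,\bF_k+t\bH_k)[\bL_k,\bL_k]$ on $[0,1]$ yields, for some $\theta_k\in(0,1)$,
\[
\DD^2 W(\bx_k,\bG_k)[\bL_k,\bL_k]-\DD^2 W(\bx_k,\bF_k)[\bL_k,\bL_k]=\DD^3 W(\bx_k,\bF_k+\theta_k\bH_k)[\bH_k,\bL_k,\bL_k],
\]
whose absolute value is at most $c^*|\bH_k|\,|\bL_k|^2\le c^*|\bH_k|$, so the ratio is bounded by $c^*$ for all large $k$ --- contradicting its divergence. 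Hence $\hat{c}<\infty$, and this is the required constant. The single point that needs care --- and the reason a bare invocation of Taylor's theorem does not suffice --- is that $\overline{\sB}$ is not assumed convex, so the segment from $\bF$ to $\bG$ need not lie in $\cO$; the compactness argument above is precisely what forces that segment into a convex neighbourhood contained in $\cO$ once $\bF$ and $\bG$ are close, and this is the only (mild) obstacle in the proof.
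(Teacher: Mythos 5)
Your proof is correct and follows essentially the same route as the paper's: the paper likewise defines $\hat{c}$ as a supremum of the difference quotient and shows finiteness by the identical contradiction--compactness argument used for Lemma~\ref{lem:taylor-III}, forcing the segment from $\bF_k$ to $\bG_k$ into a convex ball inside $\cO$ where $\DD^3 W$ is bounded. The only cosmetic differences are that the paper takes the supremum over $\bG\in\overline{\sB}_\varepsilon$ and unit $\bK$ without the absolute value, which changes nothing of substance.
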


The proof of the above result is similar to the proof of
Lemma~\ref{lem:taylor-III} with the constant $\hat{c}$ now
given by
\[
\hat{c}:=
\sup_{\substack{\bF\in \overline{\sB},\,\bx\in\overline{\Omega}\\
\bG\in \overline{\sB}_\varepsilon,\,|\bK|=1}}
\frac{\DD^2 W(\bx,\bF)[\bK,\bK]-\DD^2 W(\bx,\bG)[\bK,\bK]}{|\bG-\bF|}.
\]


\section{A Generalized Korn Inequality}\label{sec:Korn}

Our first result in Section~\ref{sec:ES-EM} required a more general version
of Korn's inequality than is usually needed in Nonlinear Elasticity.
The precise version we used can be found in a paper of
Pompe~\cite[Corollary~4.1]{Po03}.

\begin{proposition}\label{prop:Korn-1}\emph{(Korn's Inequality with
Variable Coefficients)}
Let $\bF\in C(\overline{\Omega};\Mn)$  satisfy
$\det\bF(\bx)\ge\mu>0$ for all $\bx\in\overline{\Omega}$.  Then there
exists a constant $K>0$ such that
\be\label{eqn:Korn-1}
\int_\Omega
\Big|\big[\bF(\bx)\big]^\rmT\grad\bw(\bx)
+
\big[\grad\bw(\bx)\big]^\rmT\bF(\bx)\Big|^2\,\dd\bx
\ge
K \int_\Omega \big|\grad\bw(\bx)\big|^2\,\dd\bx,
\ee
\n for every $\bw\in W^{1,2}(\Omega;\R^n)$ that satisfies
$\bw=\mathbf{0}$ on $\sD$.
\end{proposition}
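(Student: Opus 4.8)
The plan is to recognize \eqref{eqn:Korn-1} as the special case of Pompe's Korn inequality with variable coefficients~\cite[Corollary~4.1]{Po03} that we need, and to reconstruct its proof in three moves: a frozen-coefficient reduction to the classical Korn inequality, a partition-of-unity argument yielding a Korn inequality with a lower-order remainder, and a compactness argument that removes the remainder once the Dirichlet condition on $\sD$ is imposed. Throughout I write $e_\bF(\bw):=\tfrac12\big(\bF^\rmT\grad\bw+(\grad\bw)^\rmT\bF\big)$, so that \eqref{eqn:Korn-1} asserts that $\|\grad\bw\|_{L^2(\Omega)}$ is controlled by $\|e_\bF(\bw)\|_{L^2(\Omega)}$.

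First I would treat a \emph{constant} invertible matrix $\bF_0$: since $\bF_0$ is constant, $\bF_0^\rmT\grad\bw=\grad(\bF_0^\rmT\bw)$, hence $e_{\bF_0}(\bw)=e(\bF_0^\rmT\bw)$ with $e(\bv):=\tfrac12(\grad\bv+(\grad\bv)^\rmT)$ the classical infinitesimal strain; applying the classical second Korn inequality on a bounded Lipschitz domain $\omega$ to $\bv:=\bF_0^\rmT\bw$ and using the bounds on $\bF_0,\bF_0^{\mi1}$ gives $\|\grad\bw\|_{L^2(\omega)}^2\le C\big(\|e_{\bF_0}(\bw)\|_{L^2(\omega)}^2+\|\bw\|_{L^2(\omega)}^2\big)$. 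Next, since $\bF$ is continuous and $\det\bF\ge\mu>0$ on the compact set $\overline\Omega$, it is uniformly continuous and uniformly invertible there; I would cover $\overline\Omega$ by finitely many balls $B_i$ on which $|\bF-\bF_i|<\varepsilon$ for a fixed invertible $\bF_i$ and such that each $B_i\cap\Omega$ is a Lipschitz domain. On $B_i\cap\Omega$ one has $|e_{\bF_i}(\bw)-e_\bF(\bw)|\le\varepsilon|\grad\bw|$; combining this with the frozen estimate and summing over the finitely overlapping cover gives, for every $\bw\in W^{1,2}(\Omega;\R^n)$,
\[
\|\grad\bw\|_{L^2(\Omega)}^2\le C_1\Big(\|e_\bF(\bw)\|_{L^2(\Omega)}^2+\varepsilon^2\|\grad\bw\|_{L^2(\Omega)}^2+\|\bw\|_{L^2(\Omega)}^2\Big).
\]
Choosing $\varepsilon$ with $C_1\varepsilon^2\le\tfrac12$ and absorbing the gradient term yields the ``second Korn inequality with variable coefficients''
\[
\|\grad\bw\|_{L^2(\Omega)}^2\le C_2\Big(\|e_\bF(\bw)\|_{L^2(\Omega)}^2+\|\bw\|_{L^2(\Omega)}^2\Big),\qquad\bw\in W^{1,2}(\Omega;\R^n).
\]

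The final move is to delete the $\|\bw\|_{L^2}$ term using the boundary condition. From the last display, on the closed subspace $\mathcal N:=\{\bw\in W^{1,2}(\Omega;\R^n):e_\bF(\bw)=\mathbf{0}\}$ the $W^{1,2}$- and $L^2$-norms are equivalent; since $W^{1,2}\hookrightarrow L^2$ is compact, $\mathcal N$ is finite-dimensional. If \eqref{eqn:Korn-1} failed there would be $\bw_k$ with $\bw_k=\mathbf{0}$ on $\sD$, $\|\grad\bw_k\|_{L^2(\Omega)}=1$, and $\|e_\bF(\bw_k)\|_{L^2(\Omega)}\to0$; the Poincaré inequality for maps vanishing on the set $\sD$ of positive $\sH^{n-1}$-measure bounds $\|\bw_k\|_{L^2(\Omega)}$, so along a subsequence $\bw_k\to\bw$ in $L^2$, and then the second Korn inequality applied to $\bw_j-\bw_k$ forces $\bw_k\to\bw$ in $W^{1,2}$. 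Thus $\mathbf{0}\ne\bw\in\mathcal N$ with $\bw=\mathbf{0}$ on $\sD$, and the whole statement reduces to the implication: any $\bw\in\mathcal N$ vanishing on the nonempty relatively open set $\sD$ vanishes identically.

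The hard part will be exactly this last implication — a unique-continuation statement for the overdetermined first-order system $\bF^\rmT\grad\bw+(\grad\bw)^\rmT\bF=\mathbf{0}$. When $\bF$ is constant, $\mathcal N$ consists of the infinitesimal rigid motions $\bw(\bx)=\bA\bx+\bb$ with $\bA^\rmT=-\bA$, and the claim is elementary: a nonzero such $\bw$ vanishes only on an affine subspace whose direction space $\ker\bA$ has even codimension (as $\bA$ is skew), hence of dimension $\le n-2$, which cannot contain a relatively open piece of the $(n-1)$-dimensional Lipschitz boundary $\partial\Omega$; so $\bA=\mathbf{0}$ and then $\bb=\mathbf{0}$. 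For merely continuous $\bF$ this rigidity is the technical core of \cite{Po03}, and I would invoke it directly rather than reprove it. I note that in the application made in this paper one has $\bF=\grad\bu$ with $\bu\in C^1(\overline\Omega;\R^n)$ and $\det\grad\bu>0$; there the substitution $\bv:=\bw\circ\bu^{\mi1}$, on a subdomain where $\bu$ is a diffeomorphism, gives $e_\bF(\bw)=(\grad\bu)^\rmT\big[e(\bv)\circ\bu\big]\grad\bu$, so that $\mathcal N$ consists exactly of those $\bw$ for which $\bw\circ\bu^{\mi1}$ is an infinitesimal rigid motion, and the implication follows from the constant-coefficient case on the Lipschitz domain $\bu(\Omega)$.
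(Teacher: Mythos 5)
The paper itself supplies no proof of Proposition~\ref{prop:Korn-1}: the result is stated and attributed to Pompe~\cite[Corollary~4.1]{Po03}, with the surrounding Remark noting that it also follows from Hlav\'a\v cek--Ne\v cas~\cite{HN70} and that it fails for merely $L^\infty$ coefficients. So the standard of comparison here is the citation, not a written argument.

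Your reconstruction follows the canonical route to such estimates (frozen coefficients on a finite cover, second Korn inequality with a lower-order $L^2$ term, then a compactness argument to delete it), and those parts are essentially correct. The frozen-coefficient identity $e_{\bF_0}(\bw)=e(\bF_0^\rmT\bw)$ and the covering argument are sound, modulo the minor technical point that one must arrange each $B_i\cap\Omega$ to be a Lipschitz domain (this can be done, e.g.\ by adjusting radii, but deserves a word). The compactness step is the right one: a failing sequence converges, after the second-Korn estimate, strongly in $W^{1,2}$ to a nonzero $\bw\in\mathcal N$ vanishing on $\sD$, and your observation that $\mathcal N$ is finite-dimensional is correct but not actually used — you go directly to the contradiction. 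The place where the argument genuinely thins out is exactly where you flag it: the rigidity statement that $\bw\in\mathcal N$, $\bw\equiv\mathbf 0$ on $\sD$ forces $\bw\equiv\mathbf 0$ for merely \emph{continuous} $\bF$. That is the content of Pompe's theorem (and the reason the Remark in the paper emphasizes that $L^\infty$ coefficients do not suffice), and you invoke it rather than reprove it — which is precisely what the paper does, so your ``proof'' and the paper's ``proof'' coincide on the hard kernel. Two cautions on your closing remark about the application: (i) in Proposition~\ref{prop:positive-SV} the map $\bu$ is only assumed $C^1(\overline\Omega;\R^n)$ with $\det\grad\bu>0$, not globally injective, so the change of variables $\bv=\bw\circ\bu^{\mi1}$ is only local and does not by itself give the global rigidity you would need; and (ii) your $(n-2)$-dimensional zero-set argument for skew $\bA$ is fine as stated (a relatively open subset of $\partial\Omega$ has positive $\sH^{n-1}$-measure, which an $(n-2)$-dimensional affine set cannot carry), but it only settles the constant-coefficient model case, not the proposition. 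In short: the framework you lay out is the right one and is correct as far as it goes, but it bottoms out at the same citation to \cite{Po03} that the paper uses, so it is a more detailed restatement of the paper's approach rather than an alternative proof.
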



\begin{remark}  The standard version of Korn's inequality
occurs when $\bF(\bx)\equiv\bI$ in \eqref{eqn:Korn-1}.
Proposition~\ref{prop:Korn-1} is \emph{not} generally valid
if one assumes only that $\bF\in L^\infty(\Omega;\Mn)$.
Counterexamples can be found in Neff \& Pompe~\cite{NP14} and
the references therein.  Proposition~\ref{prop:Korn-1} can also
be obtained\footnote{If the boundary is $C^1$, then
\eqref{eqn:Korn-1} is a consequence of results of
de~Figueiredo~\cite{deF63}.} from results of
Hlav\'a\v cek \& Ne\v cas~\cite{HN70}
that address the problem
of coercivity for formally positive quadratic forms of
vector-valued functions (e.g., the left-hand side of
\eqref{eqn:Korn-1}).  However, \cite{HN70} does not establish
precisely \eqref{eqn:Korn-1}.
\end{remark}


\end{document}